\newcommand{\bZ}{\mathbb{Z}}
\newcommand{\lra}{\longrightarrow}
\newcommand{\os}{\overset}
\newcommand{\llm}{\underset{\longleftarrow}{\lim}\,}
\newcommand{\Hom}{\operatorname{Hom}}
\newcommand{\Ext}{\operatorname{Ext}}
\newcommand{\Ker}{\operatorname{Ker}}
\theoremstyle{dgthm}
\newtheorem{theorem}{Theorem}
\newtheorem{corollary}{Corollary}
\newtheorem{lemma}{Lemma}
\theoremstyle{dgdef}
\newtheorem{definition}{Definition}
\begin{document}


\title{On the universal coefficient formula and {\color{black} derived} $\varprojlim ^{(i)} $ functor }

\author{Anzor Beridze$^1$ and Leonard Mdzinarishvili$^2$ }

\address{$^1$Batumi Shota Rustaveli State University, 35 Ninoshvili str., Batumi, Georgia; e-mail: a.beridze@bsu.edu.ge}

\address{$^2$Georgian Technical University,
77, Kostava St., Tbilisi,
Georgia; e-mail:~l.mdzinarishvili@gtu.ge}


\begin{abstract} {\color{black} It is known that homology and inverse limit functors do not commute. In the paper we consider this very  problem and find its application for various homology theories. In particular, on the category of general topological spaces, there are defined exact homology functors induced by different non-free cochain complexes. Relation between them and other classical homology theories are given. In addition, for the defined homology functors the tautness and  the continuous properties are obtained.}
\end{abstract}

\begin{keyword}
Universal Coefficient Formula; inverse limit; {\color{black} derived} limit; tautness of homology. 
\MSC 55N10
\end{keyword}

\maketitle
\section*{\bf Introduction}
{\color{black} From the beginning of 1960, there were many approaches to define exact homology theories using the methods of homological algebra (using an injective resolution) \cite{4}, \cite{8},\cite{Mas},\cite{Kuz}, \cite{14}.  These approaches gave the unique homology theory on the category of compact Hausdorff spaces \cite{Kuz}, \cite{14}. Our aim is to develop a method of homological algebra which gives opportunity to define on the category of general topological spaces a unique exact homology theory, generated by the given cochain complex. If  $H^*$ is the cohomology of the cochain complex $C^*=Hom(C_*;G)$, then the cohomology $H^*$ is said to be generated by the chain complex $C_*$. If a chain complex $C_*$ is free, then there is a universal coefficient formula of a cohomology theory \cite{5}, \cite{71}, \cite{15}. In the paper \cite{10}, using this formula and {\color{black} derived }inverse limit, a long exact sequence is written, which shows a relation of a cohomology of direct limit of chain complexes and inverse limit of cohomology groups of corresponding cochain complexes. The result for non-free chain complexes is extended in the paper \cite{12}.} {\color{black}  In the paper, we have considered the dual version of the paper by L. Mdzinarishvili and E. Spanier \cite{12}. In particular, Theorems 3 and 4 are equivalent to Theorems 3.4 and 4.1 in \cite{12}, while Theorems 1 and 2 correspond to assertions (1) and (2) respectively, which are presented without proofs in the same paper \cite{12}. In the paper, the proofs of Theorems 3 and 4  are presented in more detail, covering some minor gaps in the arguments of \cite{12}. Moreover, while the Universal Coefficients Formula, as formulated on page 291 in $\S 2$ of Chapter V of the paper \cite{Br}, includes the dual version of Theorem 1 in the case of constant cosheaves, we offer a new proof of it. Interested reader is refereed to Exercise 5.C.6 in \cite{15}, which is the same as the short exact sequence in the very last line of the paper \cite{Kel}. It can be proved by dualizing the proof of Theorem 5.3.3 of \cite{15}, which is the same as short exact sequence (12) of \cite{Kel}.} In addition, we obtain the result (Theorem 2), which shows that the considered approach of definition of homology groups induced by a cochain complex is a generalization of classical approach whenever the cochain complex is free. The main part of the paper is the application of the obtained results for various homology theories. In particular, on the category of general topological spaces, using the considered approach, three exact homology functors 
$\bar{H}_*^{M}(-;G)$, $\bar{H}_*(-;G)$ and $\bar{H}_*^{s}(-;G)$ are constructed that are induced by the Massey cohain,  the Alexander-Spanier covhain and singular cochain, respectively. Relation between them and  Steenrod, Milnor  or Borel-Moor homology theories are given. Moreover, for the defined homology functors, the tautness (Corollary 6, Corollary 7) and  the continuous (Corollary 8, Corollary 9, Corollary 10) properties are obtained. 
{\color{black} Note that Corollary 6 is related to a question implicitly raised in \cite{71}. In particular, on page 15 in $\S$1.3 of \cite{71}, two properties (6) and (7) are  formulated, indicating that the cohomology theory defined by Massey has a compact support. Later, on page 115, before the Theorem 4.22 is formulated, the following note is made:  "the following theorem may be looked on as sort of weak dual to properties (6)  and (7) of $\S$1.3". The reason to fix that as "sort of weak dual" is that the Theorem 4.22 does not cover the general case as it is in case of  the cohomology.  The Corollary 6 of the paper answers the question.}
 
~~\\

\section{\bf Universal Coefficient Formula}

{\color{black} Let $C^*$ be a cochain complex and  $\beta_{\#}:Hom(C^*;G') \to Hom(C^*;G'')$ be the chain map induced by $\beta: G' \to G''$, where $0 \to G \os{\alpha}{\lra} G' \os{\beta}{\lra} G'' \to 0 $ is an injective resolution of $G$. Consider the cone  $C_*(\beta_{\#})=\{ C_n(\beta_{\#}), \partial \}=\{Hom( C^*,\beta_{\#}), \partial \}$ of the chain map $\beta_{\#}$ (cf. \cite{12}), i.e.
	\begin{equation}\label{eq04}
		C_n(\beta_{\#}) \simeq Hom(C^{n};G') \oplus Hom(C^{n+1};G''),
	\end{equation}
	\begin{equation}\label{eq05}
		\partial(\varphi ',\varphi '')=(\varphi ' \circ \delta, \beta \circ \varphi ' -\varphi '' \circ \delta), ~~~ \forall (\varphi ',\varphi '') \in C_n(\beta_{\#}).
	\end{equation}
	Consequently, the homology group $\bar{H}_n=\bar{Z}_n/ \bar{B}_n$ is denoted by $\bar{H}_n=\bar{H}_n(C^*;G)=H_n(C_*(\beta_{\#}))$ and is called a homology with coefficient in $G$ generated by the cochain complex $C^*$. Note that if $f:C^* \lra C'^*$ is a homomorphism of cochain complexes, then it induces the homomorphism $\bar{f}:C'_* (\beta_{\#}) \lra C_*(\beta_{\#})$ of chain {\color{black}complexes}. In particular, for each $n \in \mathbb{Z}$ the homomorphism $\bar{f}_n: C'_n (\beta_{\#}) \lra C_n (\beta_{\#})$ is defined by the formula
	$\bar{f}_n(\varphi ', \varphi '')=(\varphi ' \circ f_n , \varphi \circ f_{n+1}).$  Consequently, it induces a homomorphism of homology groups $\bar{f}:\bar{H}_n(C'^*;G) \lra \bar{H}_n(C^*;G)$. Therefore, $\bar{H}_n$ is a naturally defined functor.}

In this section we will prove the Universal Coefficient Formula for a homology theory $\bar{H}_*$ generated by the given cochain complex $C^*$. 

\begin{theorem} [Universal Coefficient Formula]
For each cochain complex $C^*$ and $R$-module $G$ over a fixed principal ideal domain $R$, there exists a short exact sequence
\begin{equation}\label{eq3}
0 \lra \Ext(H^{n+1}(C^*);G) \os{\bar{\chi}}{\lra} {\bar H}_n(C^*;G) \os{\bar{\xi}}{\lra} \Hom(H^n(C^*);G) \lra 0.
\end{equation}
\end{theorem}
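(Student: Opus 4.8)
The plan is to analyze the cone complex $C_*(\beta_{\#})$ directly using the injective resolution $0 \to G \xrightarrow{\alpha} G' \xrightarrow{\beta} G'' \to 0$. First I would note that, since $C^*$ need not be free, we cannot split $C^n$ into cycles and boundaries; instead the key is that $G'$ and $G''$ are injective $R$-modules over the PID $R$, so the functors $\Hom(-;G')$ and $\Hom(-;G'')$ are exact. Applying $\Hom(-;G')$ and $\Hom(-;G'')$ to the cochain complex $C^*$ therefore turns the cohomology of $C^*$ into homology: we get $H_n(\Hom(C^*;G')) \cong \Hom(H^n(C^*);G')$ and likewise for $G''$, because exactness of $\Hom(-;G')$ commutes with taking (co)homology. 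This is the essential simplification afforded by using a two-term injective resolution rather than a free resolution of $C^*$.

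Next I would exploit the cone construction. Since $C_*(\beta_{\#})$ is by definition the mapping cone of the chain map $\beta_{\#}\colon \Hom(C^*;G') \to \Hom(C^*;G'')$, there is the standard short exact sequence of chain complexes
\begin{equation}
0 \lra \Hom(C^*;G'') \lra C_*(\beta_{\#}) \lra \Hom(C^*;G')[-1] \lra 0,
\end{equation}
yielding a long exact sequence in homology whose connecting homomorphism is (up to sign) induced by $\beta_{\#}$. Combining this with the identifications from the previous paragraph, the long exact sequence reads
\begin{equation}
\cdots \lra \Hom(H^{n+1}(C^*);G') \os{\beta_*}{\lra} \Hom(H^{n+1}(C^*);G'') \lra \bar H_n(C^*;G) \lra \Hom(H^n(C^*);G') \os{\beta_*}{\lra} \Hom(H^n(C^*);G'') \lra \cdots.
\end{equation}
Breaking this into short exact sequences gives $\bar H_n(C^*;G)$ sandwiched between $\Coker\bigl(\Hom(H^{n+1}(C^*);G') \to \Hom(H^{n+1}(C^*);G'')\bigr)$ and $\Ker\bigl(\Hom(H^n(C^*);G') \to \Hom(H^n(C^*);G'')\bigr)$.

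Finally I would identify these kernel and cokernel terms with $\Hom$ and $\Ext$. For any $R$-module $M$, applying $\Hom(M;-)$ to the injective resolution $0 \to G \to G' \to G''\to 0$ computes $\Ext^*(M;G)$ as the cohomology of $0 \to \Hom(M;G') \to \Hom(M;G'') \to 0$; hence $\Ker\bigl(\Hom(M;G') \to \Hom(M;G'')\bigr) = \Hom(M;G)$ and $\Coker\bigl(\Hom(M;G') \to \Hom(M;G'')\bigr) = \Ext(M;G)$ (there are no higher $\Ext$ since $R$ is a PID and the resolution has length one). Taking $M = H^n(C^*)$ and $M = H^{n+1}(C^*)$ respectively turns the two short exact sequences into
\begin{equation}
0 \lra \Ext(H^{n+1}(C^*);G) \os{\bar\chi}{\lra} \bar H_n(C^*;G) \os{\bar\xi}{\lra} \Hom(H^n(C^*);G) \lra 0,
\end{equation}
which is \eqref{eq3}; naturality of all the constructions involved gives naturality of the sequence. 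The main obstacle I anticipate is the bookkeeping in the second step: one must check carefully that the natural isomorphism $H_n(\Hom(C^*;G')) \cong \Hom(H^n(C^*);G')$ intertwines the connecting map of the cone sequence with the map $\beta_*$ induced on $\Hom$-groups, and that the resulting $\bar\chi$ and $\bar\xi$ are genuinely natural in both $C^*$ and $G$; the identification with $\Ext$ in the third step, while conceptually the heart of the matter, is then essentially formal for a PID.
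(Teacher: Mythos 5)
Your proof is correct, but it reaches the sequence \eqref{eq3} by a genuinely different route than the paper. The paper works entirely at the level of elements: it constructs $\xi:\bar{Z}_n\to\Hom(H^n(C^*);G)$ by lifting $\varphi'\circ j$ through the monomorphism $\alpha$, constructs $\chi(\bar\varphi'')=(0,-\varphi'')+\bar{B}_n$ by extending $\bar\varphi''\circ p$ over $C^{n+1}$, and then verifies surjectivity, well-definedness and the two exactness statements $\IIm\chi=\Ker\bar\xi$, $\Ker\chi=\IIm\beta_*$ by direct diagram chases. You instead invoke two structural facts: (i) since $G'$ and $G''$ are injective, $\Hom(-;G')$ and $\Hom(-;G'')$ are exact, so $H_n(\Hom(C^*;G'))\simeq\Hom(H^n(C^*);G')$ with no freeness hypothesis on $C^*$ (this is exactly where injectivity replaces freeness, and it is the same mechanism the paper exploits, e.g.\ in its Corollary 2.2 and in the extension steps of its proof); and (ii) the mapping--cone short exact sequence of chain complexes whose connecting map is $\beta_*$, spliced with the four-term sequence $0\to\Hom(M;G)\to\Hom(M;G')\to\Hom(M;G'')\to\Ext(M;G)\to 0$. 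Your long exact sequence is precisely the paper's sequence \eqref{eq14} with its two ends identified, so the two proofs produce the same maps $\bar\chi,\bar\xi$; your degree bookkeeping is right (only note that with the paper's indexing $C_n(\beta_\#)=\Hom(C^n;G')\oplus\Hom(C^{n+1};G'')$ the shift sits on the subcomplex $\Hom(C^*;G'')$, not on the quotient $\Hom(C^*;G')$, a harmless convention slip since the resulting long exact sequence you write is the correct one). What your argument buys is brevity and transparent naturality in $C^*$ and $G$. What the paper's explicit version buys is the cycle-level description of $\xi$ and of $\Ker\xi$ (not just of $\bar\xi$ on homology), which is not a luxury: Section 3 (Lemma 3.1, the computation of $\varprojlim^{(i)}\Ker\xi_\gamma$, and the splitting in Theorem 3.7) works with $\xi_\gamma:\bar{Z}_n^\gamma\to\Hom(H^n(C^*_\gamma);G)$ on the nose, so if you adopt the cone argument you would still have to unwind it to recover those explicit formulas.
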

\begin{proof} We will define a homomorphism $\xi: \bar{Z}_n \lra Hom(H^n(C^*);G)$, which induces an epimorphism $\bar{\xi}:\bar{H}_n(C^*;G) \lra Hom(H^n(C^*);G)$. On the other hand, we will define a homomorphism  $\chi:Hom(H^{n+1}(C^*);G'') \lra \bar{H}_n(C^*;G)$ such that $\chi$ induces a  monomorphism $\bar{\chi}: Ext(H^{n+1}(C^*);G) \lra \bar{H}_n(C^*;G)$ and the short sequence \eqref{eq3} is exact.

{\bf a. There is a homomorphism $\xi:\bar{Z}_n \to Hom(H^n(C^*);G)$.} Let $(\varphi ' , \varphi '') \in \bar{Z}_n$ be a cycle, i.e. $\varphi' :C^n \to G'$ and $\varphi '': C^{n+1} \to G''$ are homomorphisms such that $\partial(\varphi ',\varphi '')=(\varphi ' \circ \delta, \beta \circ \varphi ' -\varphi '' \circ \delta)=(0,0)=0$ and therefore, the following diagram is commutative:
\begin{equation}\label{eq4}
\begin{tikzpicture}

\node (A) {$C^{n-1}$};
\node (B) [node distance=3cm, right of=A] {$C^n$};
\node (C) [node distance=3cm, right of=B] {$C^{n+1}$};

\node (B1) [node distance=2cm, below of=B] {$G'$};
\node (C1) [node distance=2cm, below of=C] {$G''$};

\draw[->] (A) to node [above]{$\delta$}(B);
\draw[->] (B) to node [above]{$\delta$}(C);

\draw[->] (B) to node [right]{$\varphi '$}(B1);
\draw[->] (C) to node [right]{$\varphi ''$}(C1);

\draw[->] (A) to node [above]{$0$}(B1);

\draw[->] (B1) to node [above]{$\beta$}(C1);

\end{tikzpicture}
\end{equation}
where $0$ is the zero map. Consider the groups of coboundaries $B^{n}$ and cocycles $Z^{n}$. Let $i: B^{n} \to Z^{n}$ and $j: Z^{n} \to C^{n}$ be natural monomorphisms and $\delta ' : C^{n-1} \to B^n$ be an epimophism induced by $\delta:C^{n-1} \to C^n$. Therefore, we have the following sequence
\begin{equation}\label{eq5}
C^{n-1} \os{\delta '}{\lra} B^n \os{i}{\lra} Z^n \os{j}{\lra} C^n \os{\delta}{\lra} C^{n+1},
\end{equation}
where $j \circ i \circ \delta ' = \delta$ and consequently $\delta \circ j =0$.  

Since $(\varphi',\varphi '') \in \bar{Z}_n$, we have the following commutative diagram

\begin{equation}\label{eq6}
\begin{tikzpicture}

\node (A) {$Z^n$};
\node (B) [node distance=3cm, right of=A] {$C^n$};
\node (C) [node distance=3cm, right of=B] {$C^{n+1}$};

\node (A1) [node distance=2cm, below of=A] {$G$};
\node (B1) [node distance=2cm, below of=B] {$G'$};
\node (C1) [node distance=2cm, below of=C] {$G''$};
\node (C0) [node distance=2cm, right of=C1] {$0~.$};
\node (A0) [node distance=2cm, left of=A1] {$0$};

\draw[->] (A) to node [above]{$j$}(B);
\draw[->] (B) to node [above]{$\delta$}(C);

\draw[->] (B) to node [right]{$\varphi '$}(B1);
\draw[->] (C) to node [right]{$\varphi ''$}(C1);

\draw[->] (A1) to node [above]{$\alpha$}(B1);
\draw[->] (B1) to node [above]{$\beta$}(C1);

\draw[->] (A0) to node [above]{}(A1);
\draw[->] (C1) to node [above]{}(C0);
\end{tikzpicture}
\end{equation}
Hence, $\varphi '' \circ \delta \circ j = \beta \circ \varphi ' \circ j$, and by the equality $\delta \circ j=0$, we obtain that $\beta \circ \varphi ' \circ j=0$.  So, $Im (\varphi ' \circ j) \subset Ker \beta = Im \alpha$. Therefore, there is a uniquely defined map $\varphi :Z^n \to G$ such that $\varphi ' \circ j =\alpha \circ \varphi$ (see the diagram \eqref{eq7}.

\begin{equation}\label{eq7}
\begin{tikzpicture}

\node (A) {$Z^n$};
\node (B) [node distance=3cm, right of=A] {$C^n$};
\node (C) [node distance=3cm, right of=B] {$C^{n+1}$};

\node (A1) [node distance=2cm, below of=A] {$G$};
\node (B1) [node distance=2cm, below of=B] {$G'$};
\node (C1) [node distance=2cm, below of=C] {$G''$};
\node (C0) [node distance=2cm, right of=C1] {$0~.$};
\node (A0) [node distance=2cm, left of=A1] {$0$};

\draw[->] (A) to node [above]{$j$}(B);
\draw[->] (B) to node [above]{$\delta$}(C);

\draw[dotted,->] (A) to node [right]{$\varphi $}(A1);
\draw[->] (B) to node [right]{$\varphi '$}(B1);
\draw[->] (C) to node [right]{$\varphi ''$}(C1);

\draw[->] (A1) to node [above]{$\alpha$}(B1);
\draw[->] (B1) to node [above]{$\beta$}(C1);

\draw[->] (A0) to node [above]{}(A1);
\draw[->] (C1) to node [above]{}(C0);
\end{tikzpicture}
\end{equation}
By the commutative diagram \eqref{eq4}, we have $\varphi ' \circ \delta=\varphi ' \circ j \circ i \circ \delta '=0$. Hence,  $ \alpha  \circ \varphi \circ i \circ \delta '   = \varphi '  \circ j \circ i \circ \delta' = \varphi ' \circ \delta=0$ (see the diagram \eqref{eq8}).  $\alpha$ is a monomorphism and so $ \varphi \circ i \circ \delta ' =0$. On the other hand, $\delta '$ is an epimorphism. Consequently, we have $  \varphi  \circ i =0$. Therefore, the homomorphism $\varphi : Z^n \to G$ induces a homomorphism  $\bar{\varphi} : H^n(C^*) \to G$ which belongs to $Hom(H^n(C^*);G)$. Hence, the following  diagram is commutative:
\begin{equation}\label{eq8}
\begin{tikzpicture}

\node (A) {$Z^n$};
\node (ABC) [node distance=1cm, right of=A] {};
\node (AA) [node distance=2cm, left of=A] {$B^n$};
\node (AB) [node distance=2cm, left of=AA] {$C^{n-1}$};
\node (B) [node distance=3cm, right of=A] {$C^n$};
\node (C) [node distance=3cm, right of=B] {$C^{n+1}$};

\node (A1) [node distance=2cm, below of=A] {$G$};
\node (ABC1) [node distance=1cm, below of=ABC] {$H^n$};
\node (B1) [node distance=2cm, below of=B] {$G'$};
\node (C1) [node distance=2cm, below of=C] {$G''$};
\node (C0) [node distance=2cm, right of=C1] {$0~,$};
\node (A0) [node distance=2cm, left of=A1] {$0$};

\draw[->] (AB) to node [above]{$\delta '$}(AA);
\draw[->] (AA) to node [above]{$i$}(A);
\draw[->] (A) to node [above]{$j$}(B);
\draw[->] (B) to node [above]{$\delta$}(C);

\draw[->] (A) to node [right]{$\varphi $}(A1);
\draw[->] (B) to node [right]{$\varphi '$}(B1);
\draw[->] (C) to node [right]{$\varphi ''$}(C1);
\draw[dotted,->] (AA) to node [right]{$0$}(A1);

\draw[->] (A1) to node [above]{$\alpha$}(B1);
\draw[->] (B1) to node [above]{$\beta$}(C1);

\draw[->] (A) to node [right]{$p$}(ABC1);
\draw[dotted,->] (ABC1) to node [right]{$\bar{\varphi}$}(A1);

\draw[->] (A0) to node [above]{}(A1);
\draw[->] (C1) to node [above]{}(C0);
\end{tikzpicture}
\end{equation}
where $H^n \equiv H^n(C^*).$ Let $\xi :\bar{Z}_n \to Hom(H^n;G)$ be the homomorphism defined by
\begin{equation}\label{eq9}
\xi(\varphi ', \varphi '')=\bar{\varphi}, ~~\forall (\varphi ', \varphi '') \in \bar{Z}_n.
\end{equation}

{\bf b. $\xi:\bar{Z}_n \to Hom(H^n(C^*);G)$ is an epimorphism.} Let $\bar{\varphi} \in Hom(H^n(C^*); G)$ be a homomorphism and $\varphi = \bar{\varphi} \circ p : Z^n \to G$ is the composition, where $p:Z^n \to H^n(C^*)$ is a projection. Let $\varphi ' :C^n \to G'$  be an extension of  $\alpha \circ \varphi  : Z^n \to G'$. In this case $\varphi ' \circ j = \alpha \circ \varphi$ and so $\beta \circ \varphi ' \circ j  =\beta \circ  \alpha \circ \varphi =0.$ Therefore, $\beta  \circ \varphi ' : C^n \to G''$ vanishes on the subgroup $Z^n$ and so it induces a homomorphism $\tilde{\varphi} '' :C^n/Z^n \simeq B^{n+1} \to G''$, which can be extended to a homomophism $\varphi '' : C^{n+1} \to G''$.  Since $\tilde{\varphi}'' \circ \delta '= \beta \circ \varphi '$, there is $\partial (\varphi ', \varphi '') = (\varphi ' \circ \delta ,\beta \circ \varphi ' - \varphi '' \circ \delta)=(\varphi ' \circ j \circ i \circ \delta' ,\beta \circ \varphi ' - \varphi '' \circ j \circ i \circ \delta')= (\alpha \circ \varphi \circ i \circ \delta' ,\beta \circ \varphi ' -  \tilde{\varphi}'' \circ \delta ' )=$ $(\alpha \circ \bar{\varphi} \circ p \circ i \circ \delta' , \beta \circ \varphi ' - \beta \circ \varphi ')=(0,0)=0$. Hence,  $(\varphi ', \varphi '') \in \bar{Z}_n$ and $\bar{\xi}(\varphi ', \varphi '')=\bar{\varphi}$ (see the diagram \eqref{eq10}).

\begin{equation}\label{eq10}
\begin{tikzpicture}

\node (A) {$B^n$};
\node (1A) [node distance=2cm, left of=A] {$C^{n-1}$};
\node (AA) [node distance=3cm, right of=A] {};
\node (B) [node distance=2cm, right of=A] {$Z^n$};
\node (C) [node distance=2cm, right of=B] {$C^n$};
\node (D) [node distance=2cm, right of=C] {$C^n/Z^n \simeq B^{n+1}$};
\node (E) [node distance=2cm, right of=D] {$C^{n+1}$};

\node (AA1) [node distance=1cm, below of=AA] {$H^n$};

\node (A1) [node distance=2cm, below of=A] {$0$};
\node (B1) [node distance=2cm, below of=B] {$G$};
\node (C1) [node distance=2cm, below of=C] {$G'$};
\node (D1) [node distance=2cm, below of=D] {$G''$};
\node (E1) [node distance=2cm, below of=E] {$0.$};

\draw[->] (AA1) to node [above]{$\bar{\varphi}$}(B1);
\draw[->] (B) to node [above]{$p$}(AA1);
\draw[->] (1A) to node [above]{$\delta '$}(A);
\draw[->] (A) to node [above]{$i$}(B);

\draw[->] (B) to node [above]{$j $}(C);
\draw[->] (C) to node [above]{$\delta' $}(D);
\draw[->] (D) to node [above]{$j \circ i$}(E);

\draw[->] (A1) to node [above]{$ $}(B1);
\draw[->] (B1) to node [above]{$\alpha$}(C1);
\draw[->] (C1) to node [above]{$\beta $}(D1);
\draw[->] (D1) to node [above]{$ $}(E1);

\draw[->] (B) to node [right]{$\varphi $}(B1);
\draw[dotted, ->] (C) to node [right]{$\varphi '$}(C1);
\draw[dotted, ->] (D) to node [right]{$\tilde{\varphi} ''$}(D1);
\draw[dotted, ->] (E) to node [right]{$\varphi ''$}(D1);

\end{tikzpicture}
\end{equation}

{\bf c. $\xi:\bar{Z}_n \to Hom(H^n(C^*);G)$ induces a homomorphism $\bar{\xi}:\bar{H}_n(C^*;G) \to Hom(H^n(C^*);G)$.} We have to show that the homomorphism $\xi :\bar{Z}_n \to Hom(H^n(C^*); G)$ vanishes on the subgroup $\bar{B}_n$. Indeed, let $(\psi ' , \psi '' ) \in C_{n+1}(\beta_\#)$ be an element. For $\partial (\psi ' , \psi '')=(\psi ' \circ \delta , \beta \circ \psi ' - \psi '' \circ \delta)=(\varphi ', \varphi '') \in \bar{B}_n \subset \bar{Z}_n$ we have $\varphi ' \circ j =0$. Indeed, $\varphi '  \circ j=\psi ' \circ  \delta \circ j=0$. Therefore, by the construction $\xi$, the homomorphism $\varphi : Z^n \to G$ corresponding to the pair $(\varphi ', \varphi '')$ satisfies the equation $\alpha \circ \varphi = \varphi ' \circ j =0 $ and so $\varphi =0$, because $\alpha$ is a monomorphism. Since $\varphi = \bar{\varphi} \circ p$ and $p$ is an epimorphism, we have $\bar{\varphi}=0$. Therefore, $\xi \partial (\psi ' , \psi '') = \xi (\varphi ' , \varphi '') = \bar{\varphi}=0 $  (see the diagram \eqref{eq11}).

\begin{equation}\label{eq11}
\begin{tikzpicture}
\node (A) {$Z^n$};
\node (AA) [node distance=1cm, right of=A] {};
\node (B) [node distance=3cm, right of=A] {$C^n$};
\node (C) [node distance=3cm, right of=B] {$C^{n+1}$};
\node (D) [node distance=3cm, right of=C] {$C^{n+2}$};

\node (A1) [node distance=2cm, below of=A] {$G$};
\node (AA1) [node distance=1cm, below of=AA] {$H^n$};
\node (B1) [node distance=2cm, below of=B] {$G'$};
\node (C1) [node distance=2cm, below of=C] {$G''$};
\node (C0) [node distance=2cm, right of=C1] {$0~.$};
\node (A0) [node distance=2cm, left of=A1] {$0$};

\draw[->] (A) to node [above]{$p$}(AA1);
\draw[->] (AA1) to node [above]{$\bar{\varphi}$}(A1);
\draw[->] (A) to node [above]{$j$}(B);
\draw[->] (B) to node [above]{$\delta$}(C);
\draw[->] (C) to node [above]{$\delta$}(D);

\draw[->] (C) to node [right]{$\psi ' $}(B1);
\draw[->] (D) to node [right]{$\psi '' $}(C1);

\draw[->] (A) to node [right]{$\varphi $}(A1);
\draw[->] (B) to node [right]{$\varphi '$}(B1);
\draw[->] (C) to node [right]{$\varphi ''$}(C1);

\draw[->] (A1) to node [above]{$\alpha$}(B1);
\draw[->] (B1) to node [above]{$\beta$}(C1);

\draw[->] (A0) to node [above]{}(A1);
\draw[->] (C1) to node [above]{}(C0);
\end{tikzpicture}
\end{equation}

{\bf d. The kernel of $\bar{\xi}:\bar{H}_n(C^*;G) \to Hom(H^n(C^*);G)$ is $Ext(H^{n+1}(C^*);G)$.} If we apply the functor $Hom(H^{n+1}(C^*);-)$  to the short exact sequence $0 \to G \os{\alpha}{\lra} G' \os{\beta}{\lra}  G'' \to 0$, then we obtain:
\begin{equation}\label{eq12}
 0 \lra Hom(H^{n+1}(C^*);G) \os{\alpha_*}{\lra} Hom(H^{n+1}(C^*);G') \os{\beta_*}{\lra} Hom(H^{n+1}(C^*);G'') \os{}{\lra} Ext(H^{n+1}(C^*);G) \lra 0.
\end{equation}
Therefore, we have the following isomorphism:
\begin{equation}\label{eq13}
Ext(H^{n+1}(C^*);G) \simeq Hom(H^{n+1}(C^*);G'')/Im \beta_*.
\end{equation}
Our aim is to define such a homomorpism $\chi : Hom(H^{n+1}(C^*);G'') \lra \bar{H}_n(C^*;G) $ that the following sequence is exact:
\begin{equation}\label{eq14}
Hom(H^{n+1}(C^*);G') \os{\beta_*}{\lra} Hom(H^{n+1}(C^*);G'') \os{\chi}{\lra} \bar{H}_n(C^*;G)  \os{\bar{\xi}}{\lra} Hom(H^n(C^*);G) \lra 0.
\end{equation}
Indeed, in this case, it is clear that for the homomorphisms $\bar{\xi}$, $\chi$ and $\beta_*$, we have the following short exact sequences:
\begin{equation}\label{eq15}
0 \lra Ker \bar{\xi} \os{}{\lra} \bar{H}_n(C^*;G) \os{\bar{\xi}}{\lra} Hom(H^n(C^*);G) \os{}{\lra}  0,
\end{equation}
\begin{equation}\label{eq16}
0 \lra Ker \chi \os{}{\lra} Hom(H^{n+1}(C^*);G'') \os{\chi}{\lra} Im \chi \os{}{\lra}  0,
\end{equation}
\begin{equation}\label{eq17}
0 \lra Ker \beta_* \os{}{\lra} Hom(H^{n+1}(C^*);G') \os{\beta_*}{\lra} Im \beta_* \os{}{\lra}  0.
\end{equation}
On the other hand, if we prove exactness of the sequence  \eqref{eq14}, then $Ker \bar{\xi} \simeq Im \chi$ and $Ker \chi \simeq Im \beta_*$. Therefore, we have:
\begin{equation}\label{eq18}
 Ker \bar{\xi} \simeq Im \chi \simeq  Hom (H^{n+1}(C^*);G'')/Ker \chi \simeq   Hom (H^{n+1}(C^*);G'')/Im \beta_* \simeq Ext(H^{n+1}(C^*);G).
\end{equation}

To define $\chi$, consider an element $\bar{\varphi}'' \in Hom(H^{n+1}(C^*);G'')$. Let $\varphi '' :C^{n+1} \lra G''$ be an extension of the composition $\bar{\varphi}'' \circ p : Z^{n+1} \lra G''$, where $p:Z^{n+1} \lra H^{n+1}(C^*)$ is a natural projection. In this case,  $\varphi'' \circ \delta =\varphi'' \circ    j \circ i \circ \delta ' = \tilde{\varphi}'' \circ p \circ i \circ \delta '  =0$ and so, if we take $\varphi ' =0$, then the following diagram is commutative:
\begin{equation}\label{eq19}
\begin{tikzpicture}

\node (A) {$C^{n-1}$};
\node (B) [node distance=3cm, right of=A] {$C^n$};
\node (C) [node distance=1.5cm, right of=B] {$B^{n+1}$};
\node (D) [node distance=1.5cm, right of=C] {$Z^{n+1}$};
\node (E) [node distance=1.5cm, right of=D] {$C^{n+1}$};

\node (B1) [node distance=2cm, below of=B] {$G'$};
\node (D1) [node distance=1cm, below of=D] {$H^{n+1}$};
\node (E1) [node distance=2cm, below of=E] {$G''$ .};

\draw[->] (D) to node [left]{$p$}(D1);
\draw[->] (D1) to node [right]{$\bar{\varphi}''$}(E1);

\draw[->] (A) to node [above]{$\delta$}(B);
\draw[->] (B) to node [above]{$\delta '$}(C);
\draw[->] (C) to node [above]{$i$}(D);
\draw[->] (D) to node [above]{$j$}(E);

\draw[->] (B) to node [right]{$\varphi '=0$}(B1);
\draw[->] (E) to node [right]{$\varphi ''$}(E1);

\draw[->] (A) to node [above]{$0$}(B1);

\draw[->] (B1) to node [above]{$\beta$}(E1);

\end{tikzpicture}
\end{equation}
Therefore, $(0,\varphi '') \in \bar{Z}_n$ and so, we can define $\chi$  in the following way:
\begin{equation}\label{eq20}
\chi(\bar{\varphi}'')= (0,- \varphi '')+\bar{B}_n,  ~~~\forall~ \bar{\varphi}'' \in Hom(H^{n+1}(C^*);G'').
\end{equation}
Let check that $\chi$ is well defined. Consider two different extensions $\varphi _1 ''$ and $\varphi _2 ''$ of the map $\bar{\varphi}'' \circ p :Z^{n+1} \lra G''$ and show that $(0, -\varphi_1 '')+\bar{B}_n=(0, -\varphi_2 '')+\bar{B}_n$. For this, we have to show that $(0, \varphi _2 '' - \varphi _1 '') \in \bar{B}_n$. Indeed, by the definition of $\varphi _1 ''$ and $\varphi _2 ''$, it is clear that $(\varphi _1 '' - \varphi _2 '') \circ j=\varphi _1 '' \circ j - \varphi _2 '' \circ j = \bar{\varphi}'' \circ p -\bar{\varphi}'' \circ p=0$ and so, $\varphi _1 '' - \varphi _2 ''$ induces a homomorphism $\psi : C^{n+1}/Z^{n+1} \lra G''$. On the other hand, $C^{n+1}/Z^{n+1} \simeq B^{n+2}$ and so, we have an extension $\psi '': C^{n+2} \lra G''$ of $\psi$ (see the diagran \eqref{eq261}). 
\begin{equation}\label{eq261}
\begin{tikzpicture}

\node (A) {$Z^{n+1}$};
\node (B) [node distance=3cm, right of=A] {$C^{n+1}$};
\node (C) [node distance=4cm, right of=B] {$C^{n+1}/Z^{n+1} \simeq B^{n+2}$};
\node (D) [node distance=4cm, right of=C] {$C^{n+2}$};

\node (B1) [node distance=2cm, below of=B] {$G''$};

\draw[->] (A) to node [above]{$j$}(B);
\draw[->] (B) to node [above]{$\delta '$}(C);
\draw[->] (C) to node [above]{$j \circ i$}(D);

\draw[->] (A) to node [right]{$0$}(B1);
\draw[->] (B) to node [right]{$\varphi_1 ' - \varphi_2 ''$}(B1);
\draw[->] (C) to node [above]{$\psi $}(B1);
\draw[->] (D) to node [above]{$\psi ''$}(B1);

\end{tikzpicture}
\end{equation}
In this case, it is easy to see that
\begin{equation}\label{eq21}
\partial(0, \psi '')=(0, -\psi '' \circ \delta ) =(0, - \psi '' \circ j \circ i \circ \delta ' )=(0, -\psi  \circ \delta ' )=(0, \varphi _2 '' -\varphi _1 ''). 
\end{equation}
Therefore, it remains to show that $Im \chi \simeq Ker \bar{\xi}$ and $Im \beta _* = Ker \chi$.
 
 {\bf d$_1$. $Im \chi \simeq Ker \bar{\xi}$.} Let $\bar{\varphi}'' \in Hom(H^{n+1}(C^*);G'')$ be an element, then $\bar{\xi} \left(\chi (\bar{\varphi}'')\right)= \bar{\xi} \left((0, -\varphi '')+\bar{B}_{n}\right)=\bar{\varphi}$. On the other hand, by construction of $\xi$ and the fact that the first coordinate of the pair $(0,-\varphi '')$ is zero, it is easy to check that $\bar{\varphi}=0$. Therefore, $Im \chi \subset Ker \bar{\xi}.$ Now consider an element $\bar{h} \in Ker \bar{\xi}$ and any of its representatives $(\varphi ', \varphi '') \in \bar{Z_n}$. In this case, by the definition of $\bar{\xi}$, there exists $\varphi :Z^n \to G$ such that the following diagram is commutative:
 
 \begin{equation}\label{eq22}
 \begin{tikzpicture}
 
 \node (A) {$Z^n$};
 \node (AA) [node distance=1cm, right of=A] {};
 \node (B) [node distance=3cm, right of=A] {$C^n$};
 \node (C) [node distance=3cm, right of=B] {$C^{n+1}$};
 
 \node (A1) [node distance=2cm, below of=A] {$G$};
 \node (AA1) [node distance=1cm, below of=AA] {$H^n$};
 \node (B1) [node distance=2cm, below of=B] {$G'$};
 \node (C1) [node distance=2cm, below of=C] {$G''$};
 \node (C0) [node distance=2cm, right of=C1] {$0~.$};
 \node (A0) [node distance=2cm, left of=A1] {$0$};
 
 \draw[->] (A) to node [above]{$p$}(AA1);
 \draw[dotted,->] (AA1) to node [above]{$\bar{\varphi}$}(A1);
 
 \draw[->] (A) to node [above]{$j$}(B);
 \draw[->] (B) to node [above]{$\delta$}(C);
 
 \draw[dotted,->] (A) to node [right]{$\varphi $}(A1);
 \draw[->] (B) to node [right]{$\varphi '$}(B1);
 \draw[->] (C) to node [right]{$\varphi ''$}(C1);
 
 \draw[->] (A1) to node [above]{$\alpha$}(B1);
 \draw[->] (B1) to node [above]{$\beta$}(C1);
 
 \draw[->] (A0) to node [above]{}(A1);
 \draw[->] (C1) to node [above]{}(C0);
 \end{tikzpicture}
 \end{equation}
Moreover, $\bar{h} \in Ker \bar{\xi}$ means that the homomorphism $\bar{\varphi} :H^n \to G$ induced by $\varphi   :Z^n \to G$ is zero. Therefore, $\varphi =0$ and so $\varphi ' \circ j = \alpha \circ \varphi =0$. Consequently, $\varphi ' :C^n \to G'$ induces a homomorphism $ \tilde{\psi} ' : C^n / Z^n \simeq B^{n+1} \to G'$. Let $\psi ' : C^{n+1} \to G'$ be an extension of $ \tilde{\psi}' \circ p :Z^{n+1} \to G'$ and $\psi '' = \beta \circ \psi '$ (see the diagram \eqref{eq23}).  In this case, the homomorphism $\psi = \psi '' - \varphi ''  : C^{n+1} \to G''$ vanishes on the $B^{n+1}$. Indeed,  $\psi \circ j \circ i \circ \delta ' = \psi '' \circ j \circ i \circ \delta ' -\varphi '' \circ j \circ i \circ \delta '  =\beta \circ \tilde{\psi}' \circ \delta ' -  \beta \circ \varphi ' = \beta \circ \varphi '-\beta \circ \varphi '=0$. On the other hand, $\delta '$ is an epimorphim and so $\psi \circ j \circ i =0$. Therefore,  $\psi \circ j : Z ^{n+1} \to G''$ induces a homomorphism $\bar{\psi}: H^{n+1} \to G''$ (see the diagram \eqref{eq23})

 \begin{equation}\label{eq23}
\begin{tikzpicture}

\node (A) {$Z^n$};
\node (B) [node distance=3cm, right of=A] {$C^n$};
\node (C) [node distance=2.5cm, right of=B] {$ B^{n+1}$};
\node (D) [node distance=2cm, right of=C] {$Z^{n+1}$};
\node (E) [node distance=1.5cm, right of=D] {$H^{n+1}$};
\node (F) [node distance=1cm, right of=E] {};
\node (F1) [node distance=1cm, below of=D] {$C^{n+1}$};

\node (A1) [node distance=2cm, below of=A] {$G$};
\node (B1) [node distance=2cm, below of=B] {$G'$};
\node (E1) [node distance=2cm, below of=E] {$G''$};
\node (E0) [node distance=2cm, right of=E1] {$0~.$};
\node (A0) [node distance=2cm, left of=A1] {$0$};

\draw[->] (D) to node [left]{$j$}(F1);
\draw[->] (F1) to node [above]{$\psi ''$}(E1);
\draw[<-] (9+0.3-0.75,-2.2-0.2+0.5) to node [left]{$\varphi ''$} (7.5+0.3,-1.2-0.2);

\draw[->] (A) to node [above]{$j$}(B);
\draw[->] (B) to node [above]{$\delta '$}(C);
\draw[->] (C) to node [above]{$i$}(D);
\draw[->] (D) to node [above]{$p$}(E);

\draw[dotted,->] (A) to node [right]{$\varphi $}(A1);
\draw[->] (B) to node [right]{$\varphi '$}(B1);
\draw[->] (E) to node [right]{$\bar{\psi}$}(E1);

\draw[->] (C) to node [above]{$\tilde {\psi} '$}(B1);
\draw[->] (F1) to node [above]{$ {\psi} '$}(B1);

\draw[->] (A1) to node [above]{$\alpha$}(B1);
\draw[->] (B1) to node [above]{$\beta$}(E1);

\draw[->] (A0) to node [above]{}(A1);
\draw[->] (E1) to node [above]{}(E0);
\end{tikzpicture}
\end{equation}
Our aim is to show that $\chi(\bar{\psi})=\bar{h}=(\varphi ', \varphi '')+\bar{B}_n$. Indeed, by the definition of $\chi$, it is easy to see that $\chi(\bar{\psi})=(0, - \psi)+\bar{B}_n$. Therefore, we have to show that $(\varphi ', \varphi '')-(0, -\psi)=(\varphi ', \varphi '' +\psi)=(\varphi ', \psi '') \in \bar{B}_n$. Indeed,
\begin{equation}\label{eq24}
\partial(\psi ', 0)=(\psi ' \circ \delta, \beta \circ \psi ')=(\psi ' \circ j \circ i \circ \delta ', \beta \circ \psi ')=(\tilde{\psi}' \circ \delta ', \psi '')=(\varphi ', \psi '').  
\end{equation}

{\bf d$_2$. $Im \beta_* \simeq Ker \chi$.} Let $\bar{\varphi}' \in Hom(H^{n+1}(C^*);G')$ be an element and $\varphi '' :C^{n+1} \to G''$ be an extension of the composition $\beta \circ \bar{\varphi}' \circ p :Z^{n+1} \to G''$. In this case we have $\partial(0,-\varphi '')=(0,\varphi '' \circ \delta)=(0,\varphi '' \circ j \circ i \circ \delta')= (0,\beta \circ  \bar{\varphi} ' \circ p \circ i \circ \delta')=(0,0)=0$ (see the diagram \eqref{eq26}). Therefore, $(0, -\varphi '') \in \bar{Z}_n$ and so we have
\begin{equation}\label{eq25}
\left(\chi \circ \beta_* \right) \left(\bar{\varphi}'\right) =\chi \left( \beta_* \left(\bar{\varphi}'\right) \right) =\chi \left( \beta \circ  \bar{\varphi }' \right) =
\left(0, -\varphi'' \right)+\bar{B}_n.  
\end{equation}
Our aim is to show that $\left(0, -\varphi'' \right) \in \bar{B}_n$. Indeed, let $\varphi ' :C^{n+1} \lra G'$ be an extension of the composition $\bar{\varphi}' \circ p : Z^{n+1} \lra G'$. In this case $( \beta \circ \varphi ' -\varphi '') \circ j= \beta \circ \varphi ' \circ j - \varphi '' \circ j =\beta \circ \bar{\varphi}' \circ p - \beta \circ \bar{\varphi}' \circ p =0$ and so $ \beta \circ \varphi ' -\varphi '' :C^{n+1} \lra G'' $  induces a homomorphism $\tilde{\psi}'': C^{n+1}/Z^{n+1}\simeq B^{n+2} \lra G''$  such that $\beta \circ \varphi ' - \varphi '' =\tilde{\psi}'' \circ \delta '$. Let $\psi '' :C^{n+2} \lra G''$ be an extension of a homomorphism $\tilde{\psi}':B^{n+2} \lra G''$ (see the diagram \eqref{eq26} ). 
 \begin{equation}\label{eq26}
\begin{tikzpicture}

\node (A) {$Z^{n+1}$};
\node (AA) [node distance=2cm, left of=A] {$B^{n+1}$};
\node (AAA) [node distance=2cm, left of=AA] {$C^n$};
\node (B) [node distance=2cm, right of=A] {$C^{n+1}$};
\node (CC) [node distance=4cm, right of=B] {$C^{n+1}/Z^{n+1} \simeq B^{n+2}$};
\node (C) [node distance=4cm, right of=CC] {$C^{n+2}$};

\node (AA1) [node distance=1cm, below of=A] {$H^{n+1}$};

\node (B1) [node distance=2cm, below of=B] {$G'$};
\node (C1) [node distance=2cm, below of=C] {$G''$};

\draw[->] (AAA) to node [above]{$\delta '$}(AA);
\draw[->] (AA) to node [above]{$i$}(A);
\draw[->] (A) to node [above]{$j$}(B);
\draw[->] (B) to node [above]{$\delta '$}(CC);
\draw[->] (CC) to node [above]{$j \circ i$}(C);

\draw[->] (A) to node [right]{$p$}(AA1);
\draw[->] (AA1) to node [above]{$\bar{\varphi}'$}(B1);
\draw[dotted, ->] (B) to node [right]{$\varphi '$}(B1);
\draw[dotted, ->] (C) to node [right]{$\psi ''$}(C1);

\draw[dotted, ->] (B) to node [above]{$ \varphi '' $}(C1);
\draw[dotted,->] (CC) to node [above]{$\tilde {\psi} ''$}(C1);

\draw[->] (B1) to node [above]{$\beta$}(C1);

\end{tikzpicture}
\end{equation}
In this case, we have
\begin{equation}\label{eq27}
\partial (-\varphi',-\psi'')=(-\varphi' \circ \delta,  - \beta \circ \varphi' + \psi '' \circ \delta )=(-\varphi' \circ j \circ i \circ \delta',   - \beta \circ \varphi' + \tilde{\psi} '' \circ \delta' )=(-\bar{\varphi}' \circ p \circ i \circ \delta',  - \beta \circ \varphi' +(\beta \circ \varphi' -\varphi '') )=(0, - \varphi ''). 
\end{equation}
Therefore, $(0,-\varphi'')\in \bar{B}_n$ and so, $\chi \circ \beta_*=0$. Hence, $Im \beta_* \subset \Ker \chi$. Now consider an element $\bar{\varphi}'' \in Ker \chi$. Let $\varphi '' :C^{n+1} \to G''$ be an extension of the composition $\bar{\varphi}'' \circ p :Z^{n+1} \to G''$ (see the diagram \eqref{eq28}). Then, by $\chi(\bar{\varphi}'')=(0, -\varphi '')+\bar{B}_n=0$, there exists $(\psi',\psi'') \in C_{n+1}(\beta_\#)$ such that 
\begin{equation}\label{eq27}
\partial (\psi',\psi'')=(\psi' \circ \delta,  \beta \circ \psi' - \psi '' \circ \delta )=(0, - \varphi ''). 
\end{equation}
Therefore, $\psi ' \circ \delta =\psi ' \circ j \circ i \circ \delta ' = 0$. Since $\delta ':C^n \to B^{n+1}$ is an epimorphism, we have $\psi ' \circ j \circ i=0$ and  so $\psi ' \circ j :Z^{n+1} \to G'$ induces a homomorphism $\bar{\psi}' : H^{n+1}(C^*) \to G'$. On the other hand, by $  \beta \circ \psi' - \psi '' \circ \delta =  -\varphi ''$, we have $ - \varphi '' \circ j  = \beta \circ \psi' \circ j - \psi '' \circ \delta \circ j =\beta \circ \psi' \circ j$ (see the diagram \eqref{eq28}). Therefore, $\beta_*(\bar{\psi}')=\bar{\varphi}''$ and so, $ Ker \chi \subset Im \beta_*$ 

 \begin{equation}\label{eq28}
\begin{tikzpicture}

\node (A) {$C^{n+1}$};
\node (B) [node distance=2cm, right of=A] {};
\node (C) [node distance=5cm, right of=B] {$C^{n+2}$};

\node (B1) [node distance=1cm, below of=B] {$Z^{n+1}$};
\node (B2) [node distance=2cm, below of=B] {$H^{n+1}$};

\node (A1) [node distance=3cm, below of=A] {$G'$};
\node (C1) [node distance=3cm, below of=C] {$G''$};

\draw[->] (A) to node [above]{$\delta $}(C);
\draw[->] (A1) to node [above]{$\beta $}(C1);

\draw[->] (B1) to node [above]{$j$}(A);
\draw[->] (B1) to node [right]{$p$}(B2);
\draw[->] (B2) to node [above]{$\bar{\psi}'$}(A1);
\draw[->] (A) to node [right]{$\psi'$}(A1);
\draw[->] (C) to node [right]{$\psi''$}(C1);
\draw[->] (B2) to node [above]{$\bar{\varphi} ''$}(C1);

\end{tikzpicture}
\end{equation}
\end{proof}

Since for each injective group $G$, a group of extensions $Ext(-;G)$ is trivial, by the exact sequence \eqref{eq3} we obtain the following corollary (cf. Lemma VII.4.4 \cite{71})

\begin{corollary}
	If $G$ is an injective, then there is an isomorphism
	\begin{equation}\label{eq2.0}
	\bar{H}_n(C^*;G) \simeq Hom(H^n(C^*);G).
	\end{equation}
\end{corollary}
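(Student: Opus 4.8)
The plan is to read the corollary off directly from the Universal Coefficient Formula just proved, so the argument is essentially a one-line deduction together with a naturality remark. The only input specific to the hypothesis is the standard homological fact that, over a principal ideal domain $R$, an $R$-module $G$ is injective if and only if the functor $\Ext(-;G)$ vanishes identically; equivalently, by Baer's criterion divisibility characterizes injectivity, and every $R$-module admits a free resolution of length one, so $\Ext$ computes the entire obstruction. Thus the first step is simply to record that injectivity of $G$ forces $\Ext(H^{n+1}(C^*);G)=0$ for every $n$.

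The second step is to substitute this vanishing into the short exact sequence \eqref{eq3} of the Theorem. With its left-hand term trivial, that sequence collapses to
\[
0 \lra \bar{H}_n(C^*;G) \os{\bar{\xi}}{\lra} \Hom(H^n(C^*);G) \lra 0,
\]
so exactness forces $\bar{\xi}$ to be simultaneously a monomorphism and an epimorphism, hence an isomorphism. This already establishes the assertion, and moreover it identifies the isomorphism explicitly: it is the map $\bar{\xi}$ constructed in parts (a)--(c) of the proof of the Theorem, induced by the assignment $(\varphi',\varphi'')\mapsto\bar{\varphi}$ on cycles.

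The third step --- really the only point where a word of care is warranted --- is to observe that this isomorphism is natural in $C^*$. The homomorphism $\varphi:Z^n\to G$ attached to a cycle $(\varphi',\varphi'')$ was characterized by the universal property $\varphi'\circ j=\alpha\circ\varphi$ in the diagram chase of part (a), and $\alpha$ is a monomorphism, so $\bar{\xi}$ is compatible with the maps $\bar{f}$ induced by cochain maps $f:C^*\to C'^*$. Consequently $\bar{H}_n(-;G)$ and $\Hom(H^n(-);G)$ are naturally isomorphic functors whenever $G$ is injective, which is the content of the comparison with Lemma VII.4.4 of \cite{71}. I do not anticipate any genuine obstacle: the statement is an immediate corollary, and the whole weight of the hypothesis is absorbed into the equivalence ``$G$ injective $\iff \Ext(-;G)=0$'' over a principal ideal domain.
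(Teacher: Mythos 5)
Your proposal is correct and coincides with the paper's own argument: the corollary is obtained exactly by noting that injectivity of $G$ forces $\Ext(H^{n+1}(C^*);G)=0$ and then reading the isomorphism $\bar{\xi}$ off the short exact sequence \eqref{eq3}. The additional naturality remark is sound but not needed for the statement as given.
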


Let $C_*=Hom(C^*;G)$ be a chain complex, where $C_n=Hom(C^n;G)$ and $\partial$ {\color{black} is defined} by $\partial(\varphi)= \varphi \circ \delta,$ for $\varphi \in Hom(C^n;G)$.  In this case, there is  a map $\alpha_*:Hom(C^*;G) \lra Hom(C^*;\beta_\#)$ defined by:
\begin{equation}\label{eq2.1}
\alpha_*(\varphi)=(\alpha \circ \varphi, 0), ~~~ \forall \varphi \in Hom(C^n;G).
\end{equation}
Let $H_n(C^*;G)$ be a homology group of chain complex $Hom(C^*;G)$.
\begin{theorem}
	If a cochain complex $C^*$ is free, then the homomorphism $\alpha_*:Hom(C^*;G) \lra Hom(C^*;\beta_\#)$ induces an isomoprhism
	\begin{equation}\label{eq2.2}
	\bar{\alpha}_*:H_n(C^*;G) \lra \bar{H}_n(C^*;G).
	\end{equation}
\end{theorem}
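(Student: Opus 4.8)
The plan is to recognize $\Hom(C^*;\beta_\#)=C_*(\beta_\#)$ as an algebraic mapping cone and $\alpha_*$ as the inclusion of its kernel subcomplex, and then to invoke the standard fact that this inclusion is a quasi-isomorphism. First I would exploit the hypothesis that $C^*$ is free: since each $C^n$ is a free (hence projective) $R$-module, the functor $\Hom(C^n;-)$ is exact, so applying it degreewise to the injective resolution $0\to G\os{\alpha}{\lra}G'\os{\beta}{\lra}G''\to 0$ yields a short exact sequence of chain complexes
\[
0\lra \Hom(C^*;G)\os{\alpha_\#}{\lra}\Hom(C^*;G')\os{\beta_\#}{\lra}\Hom(C^*;G'')\lra 0 ,
\]
where $\alpha_\#(\varphi)=\alpha\circ\varphi$ and every map commutes with the differential $\varphi\mapsto\varphi\circ\delta$. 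Writing $Q_\bullet=\Hom(C^*;G')$ and $R_\bullet=\Hom(C^*;G'')$ as chain complexes ($Q_n=\Hom(C^n;G')$, $R_n=\Hom(C^n;G'')$), one reads off from \eqref{eq04}--\eqref{eq05} that $C_*(\beta_\#)$ is precisely the mapping cone of the chain map $\beta_\#\colon Q_\bullet\to R_\bullet$, with $C_n(\beta_\#)=Q_n\oplus R_{n+1}$ and $\partial(\varphi',\varphi'')=(\partial_Q\varphi',\ \beta_\#\varphi'-\partial_R\varphi'')$, and that the map $\alpha_*$ of \eqref{eq2.1} is exactly $\varphi\mapsto(\alpha_\#\varphi,0)$.

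The heart of the argument is then the purely homological fact that the inclusion of the kernel of a degreewise epimorphism of chain complexes into its mapping cone is a \emph{quasi-isomorphism}. Concretely, $\alpha_*$ is a monomorphism of chain complexes whose image is the subcomplex $K_\bullet=\{(\varphi',0):\varphi'\in\Ker\beta_\#\}\subseteq C_*(\beta_\#)$, and $\alpha_\#$ identifies $\Hom(C^*;G)$ with $K_\bullet$ (this identification uses exactly the exactness of $\Hom(C^n;-)$, i.e.\ the freeness of $C^*$). I would then compute the quotient $C_*(\beta_\#)/K_\bullet$: in degree $n$ it is $(Q_n/\Ker\beta_\#)\oplus R_{n+1}\cong R_n\oplus R_{n+1}$ via $\beta_\#$ on the first factor, and the induced differential sends $(u,\varphi'')\mapsto(\partial_R u,\ u-\partial_R\varphi'')$; that is, $C_*(\beta_\#)/K_\bullet$ is canonically isomorphic to the mapping cone $\operatorname{Cone}(\mathrm{id}_{R_\bullet})$, which is contractible (a contracting homotopy is $(u,\varphi'')\mapsto(\varphi'',0)$) and hence acyclic. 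The long exact homology sequence of
\[
0\lra K_\bullet\lra C_*(\beta_\#)\lra \operatorname{Cone}(\mathrm{id}_{R_\bullet})\lra 0
\]
then forces $H_n(K_\bullet)\os{\cong}{\lra}H_n(C_*(\beta_\#))$ for every $n$. Composing with the isomorphism $H_n(\Hom(C^*;G))\cong H_n(K_\bullet)$ induced by $\alpha_\#$ gives exactly that $\bar\alpha_*\colon H_n(C^*;G)\to\bar H_n(C^*;G)$ is an isomorphism.

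I expect the only real obstacle to be bookkeeping: pinning down the degree and sign conventions of the mapping cone so that they agree with \eqref{eq04}--\eqref{eq05}, and checking that the quotient complex is \emph{literally} $\operatorname{Cone}(\mathrm{id}_{R_\bullet})$ rather than a shift or twist of it --- once that identification is in place, contractibility and hence the whole statement are immediate. Two remarks are worth recording in the write-up. First, this argument uses only that $C^*$ is free (so that $\Ker\beta_\#=\Hom(C^*;G)$ degreewise); injectivity of $G'$ and $G''$ plays no role here, in contrast with Theorem 2.1. Second, one could instead avoid the cone manipulation and compare the short exact sequence \eqref{eq3} of Theorem 2.1 with the classical Universal Coefficient sequence $0\to\Ext(H^{n+1}(C^*);G)\to H_n(\Hom(C^*;G))\to\Hom(H^n(C^*);G)\to 0$ (available since $C^*$ is free), verify that $\bar\alpha_*$ is a morphism of these two sequences inducing the identity on the outer terms, and conclude by the five lemma; but checking that compatibility of $\bar\alpha_*$ with the maps $\bar\chi$, $\bar\xi$ is more laborious than the direct cone argument above.
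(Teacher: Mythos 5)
Your argument is correct, but it is genuinely different from the one in the paper. The paper proves this theorem by recalling Massey's construction of the classical Universal Coefficient sequence for $H_n(\Hom(C^*;G))$ (the maps $\chi_0$ and $\tilde{\xi}$), establishing the commutativity of the ladder \eqref{eq2.8} joining it to the sequence of Theorem 2.1 via $\bar{\alpha}_*$ with identities on the outer terms, and concluding from the resulting diagram \eqref{eq2.16} --- i.e.\ exactly the ``five lemma'' route you sketch and set aside in your closing remark. Your main argument instead identifies $\alpha_*$ with the inclusion of the subcomplex $K_\bullet=\Ker\beta_\#\oplus 0$ into the cone $C_*(\beta_\#)$ and shows directly that the quotient is $\operatorname{Cone}(\mathrm{id}_{R_\bullet})$, contractible via $s(u,v)=(v,0)$ (which indeed satisfies $\partial s+s\partial=\mathrm{id}$ with the paper's conventions \eqref{eq04}--\eqref{eq05}); the long exact homology sequence then finishes the proof. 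This is shorter, avoids re-deriving either Universal Coefficient sequence, and isolates precisely where freeness enters. One small imprecision: the identification $\Hom(C^*;G)\cong\Ker\beta_\#$ uses only the left exactness of $\Hom(C^n;-)$, which holds for any $C^n$; freeness (projectivity) of $C^n$ is actually needed one step later, to make $\beta_\#$ degreewise surjective so that $Q_n/\Ker\beta_\#\cong R_n$. Your observation that injectivity of $G'$ and $G''$ is not used is correct. What the paper's longer computation buys, and your cone argument does not, is the explicit compatibility of $\bar{\alpha}_*$ with the two Universal Coefficient sequences (diagram \eqref{eq2.16}), which the paper states afterwards as a conclusion in its own right; if you adopt the cone argument you would still need a separate (short) verification of that naturality statement.
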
  
\begin{proof}
	Since $C^*$ is a free cochain complex, there is a short exact sequence:
	 \begin{equation}\label{eq2.3}
	 0 \lra \Ext(H^{n+1}(C^*);G) \os{\tilde{\chi}}{\lra} { H}_n(Hom(C^*;G)) \os{\tilde{\xi}}{\lra} \Hom(H^n(C^*);G) \lra 0.
	 \end{equation}
{\color{black}	Let us review} how the morphisms $\tilde{\xi}$ and $\tilde{\chi}$ are defined according to W. Massey's \cite{71} approach. Note that Massey has considered a free chain complex case and consequently, he has obtained {\color{black}Universal} Coefficient Formula for cohomology theory and not homology theory.\\
	{\bf a.}  For each $\bar{\varphi} \in H_n(Hom(C^*;G)),$ element let $\tilde{\xi}(\bar{\varphi}): H^n(C^*) \lra G $ be homomorphism given by 
	\begin{equation}\label{eq2.4}
	\tilde{\xi}(\bar{\varphi})(\bar{c})= \langle \varphi, c \rangle =\varphi(c), ~~~~\forall~c \in \bar{c}, ~\bar{c} \in H^n(C^*),
	\end{equation}
	where $\varphi$ is a representative of $\bar{\varphi}$ \cite{8}.\\  
	{\bf b.} To define the homomorphism $\tilde{\chi}: Ext(H^{n+1}(C^*);G) \lra H_n(Hom(C^*;G))$, we need to use the isomorphism \eqref{eq13}. Consequently, the homomorphism $\tilde{\chi}$ is the homomorphism induced by  $\chi_0: Hom(H^{n+1}(C^*);G'') \lra H_n(Hom(C^*;G)),$ where $\chi_0$ is defined in the following way.  Let $\bar{\varphi}'' \in Hom(H^{n+1}(C^*);G'') $ be any element. Since $G''$ is an injective, there is  an extension $\varphi '' :C^{n+1} \lra G''$ of the composition $\bar{\varphi}'' \circ p :Z^{n+1} \lra G''.$  In this case $\partial(\varphi '')= \varphi '' \circ \delta= \varphi '' \circ j \circ i \circ \delta ' =\bar{\varphi}'' \circ p \circ i \circ \delta ' =0$  (see diagram \eqref{eq41.1}). 
	\begin{equation}\label{eq41.1}
	\begin{tikzpicture}
	
	\node (A) {$C^{n-1}$};
	\node (B) [node distance=3cm, right of=A] {$C^n$};
	\node (C) [node distance=2.5cm, right of=B] {$B^{n+1}$};
	\node (D) [node distance=2cm, right of=C] {$Z^{n+1}$};
	\node (E) [node distance=1.5cm, right of=D] {$C^{n+1}$};
	\node (F) [node distance=1cm, right of=E] {};
	\node (F1) [node distance=1cm, below of=D] {$H^{n+1}$};
	
	\node (A1) [node distance=2cm, below of=A] {$G$};
	\node (B1) [node distance=2cm, below of=B] {$G'$};
	\node (E1) [node distance=2cm, below of=E] {$G''$};
	\node (E0) [node distance=2cm, right of=E1] {$0~.$};
	\node (A0) [node distance=2cm, left of=A1] {$0$};
	
	\draw[->] (D) to node [left]{$p$}(F1);
	\draw[->] (F1) to node [above]{$\bar{\varphi}''$}(E1);
	
	\draw[->] (A) to node [above]{$j$}(B);
	\draw[->] (B) to node [above]{$\delta '$}(C);
	\draw[->] (C) to node [above]{$i$}(D);
	\draw[->] (D) to node [above]{$j$}(E);
	
	\draw[dotted,->] (B) to node [right]{$\varphi $}(A1);
	\draw[dotted, ->] (B) to node [right]{$\varphi '$}(B1);
	\draw[->] (E) to node [right]{$\varphi ''$}(E1);
	
	\draw[dotted, ->] (E) to node [above]{$ {\psi} '$}(B1);
	
	\draw[->] (A1) to node [above]{$\alpha$}(B1);
	\draw[->] (B1) to node [above]{$\beta$}(E1);
	
	\draw[->] (A0) to node [above]{}(A1);
	\draw[->] (E1) to node [above]{}(E0);
	\end{tikzpicture}
	\end{equation}	
	Therefore, it {\color{black} defines a} homology class $[\varphi''] \in H_{n+1}(Hom(C^*;G''))$. Let $E:H_{n+1}(Hom(C^*;G)) \lra H_{n}(Hom(C^*;G))$ be a boundary homomorphism induced by the following exact sequence:
	\begin{equation}\label{eq2.5}
	0 \lra Hom(C^*;G) \os{\alpha_\#}{\lra} Hom(C^*;G') \os{\beta_{\#}}{\lra} Hom(C^*;G'') \lra 0.
	\end{equation}
	Define a homomorphism $\chi_0 $ by the formula
	\begin{equation}\label{eq2.6}
    \chi_0\left(\bar{\varphi}''\right)=E\left([{\varphi}'']\right),~~~\forall~\bar{\varphi}'' \in Hom(H^{n+1}(C^*);G'').
	\end{equation}
	Note that the homomorphism $\chi_0 : Hom(H^{n+1}(C^*);G) \lra H_n(Hom(C^*;G))$ is a composition of  the isomorphism  $Hom(H^{n+1}(C^*);G) \os{\simeq}{\lra} H_{n+1}(Hom(C^*;G))$ and the homomorphism $E:H_{n+1}(Hom(C^*;G)) \lra H_{n}(Hom(C^*;G))$. To write the explicit formula for $\chi_0$, consider such a map $\psi':C^{n+1} \to G'$ that $\beta \circ \psi ' =\varphi ''$ (this is possible, because the cochain complex $C^*$ is free). Let $\varphi ' = \psi ' \circ \delta :C^n \to G'$, then we have $\beta \circ \psi ' = \varphi '' \circ \delta = 0$. Therefore, $\varphi ' \in Ker \beta = Im \alpha$ and so, there exists a unique map $\varphi :C^n \to G $ such that $\alpha \circ \varphi = \varphi '$. In this case, we have $ \alpha \circ \varphi  \circ \delta = \varphi ' \circ \delta = \psi ' \circ \delta \circ \delta =0$ and so, $\varphi \circ \delta =0$ because $\alpha$ is a monomorphism. On the other hand, $\partial (\varphi)= \varphi \circ \delta =0.$ Consequently, $\varphi$ defines a homology class $[\varphi] \in H_n((Hom(C^*;G))$ (see diagram \eqref{eq41.1}). Finally, by the formula \eqref{eq2.6} we have
	\begin{equation}\label{eq2.61}
		\chi_0\left(\bar{\varphi}''\right)=E\left([{\varphi}'']\right)=[\varphi],~~~\forall~\bar{\varphi}'' \in Hom(H^{n+1}(C^*);G'').
    \end{equation}

	In this case, the sequence \eqref{eq2.3} is induced by the following sequence
	\begin{equation}\label{eq2.7}
	Hom(H^{n+1}(C^*);G') \os{\beta_*}{\lra} Hom(H^{n+1}(C^*);G'') \os{\chi_0}{\lra} { H}_n(Hom(C^*;G)) \os{\tilde{\xi}}{\lra} Hom(H^n(C^*);G) \lra 0.
	\end{equation}
	Therefore, by \eqref{eq14} and \eqref{eq2.7}, it is sufficient to show that the following diagram is commutative:
	\begin{equation}\label{eq2.8}
	\begin{tikzpicture}
	
	\node (A) {$Hom(H^{n+1}(C^*);G')$};
	\node (B) [node distance=4cm, right of=A] {$Hom(H^{n+1}(C^*);G'')$};
	c\node (C) [node distance=4cm, right of=B] {$ H_n(Hom(C^*;G))$};
	\node (D) [node distance=4cm, right of=C] {$Hom(H^n(C^*);G)$};
	\node (E) [node distance=2cm, right of=D] {$0$};
	
	\node (A1) [node distance=2cm, below of=A] {$Hom(H^{n+1}(C^*);G')$};
	\node (B1) [node distance=2cm, below of=B] {$Hom(H^{n+1}(C^*);G'')$};
	\node (C1) [node distance=2cm, below of=C] {$ \bar{H}_n(Hom(C^*;\beta_{\#}))$};
	\node (D1) [node distance=2cm, below of=D] {$Hom(H^n(C^*);G)$};
	\node (E1) [node distance=2cm, below of=E] {$0$.};

	\draw[->] (A) to node [above]{$\beta_*$}(B);
	\draw[->] (B) to node [above]{$\chi_0$}(C);
	\draw[->] (C) to node [above]{$\tilde{\xi}$}(D);
	\draw[->] (D) to node [above]{}(E);
	
	\draw[->] (A1) to node [above]{$\beta_*$}(B1);
	\draw[->] (B1) to node [above]{$\chi$}(C1);
	\draw[->] (C1) to node [above]{$\bar{\xi}$}(D1);
	\draw[->] (D1) to node [above]{}(E1);
	
	\draw[->] (A) to node [right]{$1$}(A1);
	\draw[->] (B) to node [right]{$1$}(B1);
	\draw[->] (C) to node [right]{$\bar{\alpha}_*$}(C1);
	\draw[->] (D) to node [right]{$1$}(D1);
	\end{tikzpicture}
	\end{equation}
	Indeed, let $\bar{\varphi}'' \in Hom(H^{n+1}(C^*);G'')$ be an element and $\varphi '' :C^{n+1} \lra G''$ be an extension of the composition $\bar{\varphi}'' \circ p :Z^{n+1} \lra G''.$ Then, {\color{black} by the snake lemma}, we must take an element $\varphi' \in Hom(C^{n+1};G')$, such that $\beta_{\#}(\varphi')=\varphi ''$. Note that this is possible because of exactness of the sequence \eqref{eq2.5}. Then, there is a cycle $\varphi \in Hom(C^n;G)$, such that $\alpha_\#(\varphi)=\partial(\varphi').$ Let $[\varphi]=\varphi+B_n$ be the corresponding element in the homology group $H_n(Hom(C^*;G)),$ then $\chi_0(\bar{\varphi}'')=[\varphi].$ By the definition of the map $\tilde{\alpha}:H_n(Hom(C^*);G) \lra \bar{H}_n(Hom(C^*);\beta_{\#})$, we have
	\begin{equation}\label{eq2.10}
	\left(\tilde{\alpha}_* \circ \chi_0 \right) \left(\bar{\varphi}'' \right)=\tilde{\alpha}_* \left( \chi_0  \left(\bar{\varphi}'' \right) \right)=\tilde{\alpha}_* \left([\varphi] \right)=\left(\alpha \circ \varphi, 0 \right)+\bar{B}_n.
	\end{equation}
	On the other hand, by the definition of $\chi: Hom(G^{n+1}(C^*);G'') \lra \bar{H}_n(Hom(C^*);\beta_{\#}),$ we have
	\begin{equation}\label{eq2.11}
	 \chi(\bar{\varphi}'')=(0, - \varphi '')+\bar{B}_n,  ~~~\forall~ \bar{\varphi}'' \in Hom(H^{n+1};G'').
	\end{equation}
	Therefore, we have to show that $(\alpha \circ \varphi , 0)-(0, - \varphi'')=(\alpha \circ \varphi,  \varphi '') \in \bar{B_n}.$ Indeed, by the equality $\alpha_\#(\varphi)=\partial(\varphi ')$ and $\beta_{\#}(\varphi')=\varphi''$, we have $\alpha \circ \varphi=\varphi ' \circ \delta$ and $\beta \circ \varphi'=\varphi''$. Therefore,
	\begin{equation}\label{eq2.12}
	\partial (\varphi ', 0)=(\varphi ' \circ \delta , \beta \circ \varphi ')=(\alpha \circ \varphi, \varphi '').
	\end{equation}
	By \eqref{eq2.10}, \eqref{eq2.11}, and \eqref{eq2.12}, we obtain that $\tilde{\alpha}_* \circ \chi_0=\chi$. So, it remains to show that $\bar{\xi} \circ \tilde{\alpha}= \tilde{\xi}$.
	
	Let $[\varphi] \in H_n(Hom(C^*;G))$ be an element and $\varphi$ is its representative. Then, by the definitions of $\bar{\xi}$ and $\tilde{\alpha}_*$ we have
	\begin{equation}\label{eq2.13}
	\left(\bar{\xi} \circ \tilde{\alpha}_*\right)\left([\varphi] \right)=\bar{\xi} \left( \tilde{\alpha}_*\left([\varphi] \right) \right)=\bar{\xi} \left( \alpha \circ \varphi, 0 \right)=\left(\varphi \right)+\bar{B}_n.
	\end{equation}
	Therefore, if we take an element $\bar{c} \in H^n(C^*)$ and  any of its representatives $c \in \bar{c}$, then  by \eqref{eq2.13} we have
	
	\begin{equation}\label{eq2.14}
	\left(\bar{\xi} \circ \tilde{\alpha}\right)\left([\varphi] \right)\left(\bar{c}\right)=\left( \varphi +\bar{B}_n  \right)\left( \bar{c} \right)=\varphi(c).
	\end{equation}
	Therefore, by \eqref{eq2.4} , \eqref{eq2.13} and \eqref{eq2.14}, we obtain that
    \begin{equation}\label{eq2.15}
	\bar{\xi} \circ \tilde{\alpha}= \tilde{\xi}.
	\end{equation}
\end{proof}

Note that by the commutative diagram \eqref{eq2.8}, we obtain the following commutative diagram:

\begin{equation}\label{eq2.16}
\begin{tikzpicture}

\node (A) {$0$};
\node (B) [node distance=2.5cm, right of=A] {$Ext(H^{n+1}(C^*);G)$};
c\node (C) [node distance=4cm, right of=B] {$ H_n(Hom(C^*;G))$};
\node (D) [node distance=4cm, right of=C] {$Hom(H^n(C^*);G)$};
\node (E) [node distance=2cm, right of=D] {$0$};

\node (A1) [node distance=2cm, below of=A] {$0$};
\node (B1) [node distance=2cm, below of=B] {$Ext(H^{n+1}(C^*);G)$};
\node (C1) [node distance=2cm, below of=C] {$ \bar{H}_n(C^*;G)$};
\node (D1) [node distance=2cm, below of=D] {$Hom(H^n(C^*);G)$};
\node (E1) [node distance=2cm, below of=E] {$0$.};

\draw[->] (A) to node [above]{}(B);
\draw[->] (B) to node [above]{$\tilde{\chi}$}(C);
\draw[->] (C) to node [above]{$\tilde{\xi}$}(D);
\draw[->] (D) to node [above]{}(E);

\draw[->] (A1) to node [above]{}(B1);
\draw[->] (B1) to node [above]{$\bar{\chi}$}(C1);
\draw[->] (C1) to node [above]{$\bar{\xi}$}(D1);
\draw[->] (D1) to node [above]{}(E1);

\draw[->] (B) to node [right]{$1$}(B1);
\draw[->] (C) to node [right]{$\bar{\alpha}_*$}(C1);
\draw[->] (D) to node [right]{$1$}(D1);
\end{tikzpicture}
\end{equation}

Therefore, if a cochain complex $C^*$ is free, then the classical Universal Coefficient Formula is isomorphic to the Universal Coefficient Formula {\color{black}deduced} in this paper.

\section{Some {\color{black}properties}  {\color{black} of inverse limit and its derived}  functors}


As we have seen in the previous section, there exists an epimorphism $\xi:\bar{Z}_n \lra Hom(H^n(C^*);G)$ which induces a homomorphism:
\begin{equation}\label{eq32}
\bar{\xi}:\bar{H}_n(C^*;G) \lra Hom (H^n(C^*);G)
\end{equation}
and the following diagram is commutative:
\begin{equation}\label{eq33}
\begin{tikzpicture}

\node (A) {$\bar{Z}_n$};
\node (B) [node distance=2cm, right of=A] {};
\node (C) [node distance=2cm, right of=B] {$Hom(H^n(C^*);G)$};
\node (B') [node distance=2cm, below of=B] {$\bar{H}_n(C^*;G)$.};

\draw[->] (A) to node [above]{$\xi$}(C);
\draw[->] (A) to node [above]{$\bar{p}$}(B');
\draw[->] (B') to node [above]{$\bar{\xi}$}(C);
\end{tikzpicture}
\end{equation}
To investigate $Ker \xi$ we construct a homomorphism
\begin{equation}\label{eq34}
\omega:Hom(C^{n+1};G') \oplus Hom(C^{n+1}/B^{n+1};G'') \lra Ker \xi
\end{equation}
by $\omega(\psi', \psi '')=(\psi' \circ \delta, \beta \circ \psi' -\psi '' \circ q),$ where $q:C^{n+1} \lra C^{n+1}/B^{n+1}$ is the quotient  map. {\color{black} Let us show} that $\omega(\psi',\psi'') \in Ker \xi$. Indeed,  $\partial \omega(\psi',\psi'')= \partial (\psi' \circ \delta, \beta \circ \psi' -\psi '' \circ q)=(\psi \circ \delta \circ \delta, \beta \circ \psi' \circ \delta -(\beta \circ \psi ' - \psi'' \circ  q)\circ \delta)=(0,\beta \circ \psi' \circ \delta - \beta \circ \psi ' \circ \delta + \psi'' \circ  q \circ \delta )=(0,\psi'' \circ  q  \circ \delta )=(0,0),$ because $q \circ \delta=0.$ Hence, $\omega (\psi ', \psi '') \in \bar{Z}_n$. By the definition of $\xi : \bar{Z}_n \lra Hom(H^n(C^*);G)$, there exists a uniquely defined map $\varphi : Z^n \lra G$ such that  $\alpha \circ \varphi = \psi' \circ \delta \circ j$. On the other hand, $\delta \circ j =0$ and so, $\alpha \circ \varphi =0$, which induces a homomorphism $\bar{\varphi} :H^n(C^*) \lra G$. Note that $\alpha$ is a monomorphism and $\alpha \circ \varphi =0$ implies that $\varphi=0$ and consequently $\left(\omega(\psi ', \psi '')\right)=(\psi' \circ \delta, \beta \circ \psi' -\psi '' \circ q)=\bar{\varphi}=0$. Therefore, we obtain that $\omega(\psi ', \psi '') \in Ker \xi.$

\begin{lemma} For each integer $n \in \mathbb{N}$, there exists the following short exact sequence 
\begin{equation}\label{eq36}
	\begin{tikzpicture}
	
	\node (A) {$0$};
	\node (B) [node distance=2.5cm, right of=A] {$Hom(C^{n+1}/B^{n+1};G')$};
	\node (C) [node distance=5cm, right of=B] {$Hom(C^{n+1};G') \oplus Hom(C^{n+1}/B^{n+1};G'')$};
	\node (D) [node distance=4cm, right of=C] {$Ker \xi$};
	\node (E) [node distance=1.5cm, right of=D] {$0,$};

	\draw[->] (A) to node [above]{}(B);
	\draw[->] (B) to node [above]{$\sigma$}(C);
	\draw[->] (C) to node [above]{$\omega$}(D);
	\draw[->] (D) to node [above]{}(E);

	\end{tikzpicture}
\end{equation}
where $\sigma: Hom(C^{n+1}/B^{n+1};G') \lra Hom(C^{n+1};G') \oplus Hom(C^{n+1}/B^{n+1};G'')$ is defined by the formula
\begin{equation}\label{eq35}
\sigma(\varphi)=(\varphi \circ q, \beta \circ \varphi), ~~~\forall \varphi \in Hom(C^{n+1}/B^{n+1};G').
\end{equation} 
\end{lemma}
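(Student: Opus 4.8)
The statement to prove is that the sequence
\[
0 \lra \Hom(C^{n+1}/B^{n+1};G') \os{\sigma}{\lra} \Hom(C^{n+1};G') \oplus \Hom(C^{n+1}/B^{n+1};G'') \os{\omega}{\lra} \Ker \xi \lra 0
\]
is exact, where $\sigma(\varphi)=(\varphi\circ q,\beta\circ\varphi)$ and $\omega(\psi',\psi'')=(\psi'\circ\delta,\beta\circ\psi'-\psi''\circ q)$. The plan is to check exactness at each of the three spots in turn.

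First I would verify injectivity of $\sigma$ at the left. If $\sigma(\varphi)=(\varphi\circ q,\beta\circ\varphi)=(0,0)$, then in particular $\varphi\circ q=0$; since $q:C^{n+1}\lra C^{n+1}/B^{n+1}$ is an epimorphism, this forces $\varphi=0$. (The second coordinate $\beta\circ\varphi=0$ is not even needed here, but is consistent.) Next, for exactness in the middle I would first check $\omega\circ\sigma=0$: for $\varphi\in\Hom(C^{n+1}/B^{n+1};G')$ one computes
\[
\omega(\sigma(\varphi))=\omega(\varphi\circ q,\beta\circ\varphi)=\bigl((\varphi\circ q)\circ\delta,\ \beta\circ(\varphi\circ q)-(\beta\circ\varphi)\circ q\bigr)=(0,0),
\]
using $q\circ\delta=0$ (the composite $C^n\os{\delta}{\lra}C^{n+1}\os{q}{\lra}C^{n+1}/B^{n+1}$ vanishes since $\IIm\delta\subseteq B^{n+1}$) and the trivial cancellation in the second coordinate. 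For the reverse inclusion $\Ker\omega\subseteq\IIm\sigma$, suppose $\omega(\psi',\psi'')=(\psi'\circ\delta,\beta\circ\psi'-\psi''\circ q)=(0,0)$. From $\psi'\circ\delta=0$ we get that $\psi':C^{n+1}\lra G'$ vanishes on $B^{n+1}=\IIm(\delta)$, so it factors as $\psi'=\varphi\circ q$ for a unique $\varphi\in\Hom(C^{n+1}/B^{n+1};G')$. The second equation then reads $\psi''\circ q=\beta\circ\psi'=\beta\circ\varphi\circ q$, and since $q$ is epic, $\psi''=\beta\circ\varphi$. Hence $(\psi',\psi'')=(\varphi\circ q,\beta\circ\varphi)=\sigma(\varphi)$.

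The main work is exactness at the right, i.e. surjectivity of $\omega$ onto $\Ker\xi$. One direction — that $\IIm\omega\subseteq\Ker\xi$ — was already established in the paragraph preceding the lemma. For the other direction I would take a representative cycle $(\varphi',\varphi'')\in\bar Z_n$ with $\xi(\varphi',\varphi'')=0$, i.e. the induced $\bar\varphi:H^n(C^*)\lra G$ is zero, hence (as in the arguments of step d of Theorem 2.1) the map $\varphi:Z^n\lra G$ with $\alpha\circ\varphi=\varphi'\circ j$ is zero, so $\varphi'\circ j=0$. Thus $\varphi'$ vanishes on $Z^n$ and factors through $C^n/Z^n\simeq B^{n+1}$; composing with an extension to $C^{n+1}$ (using that $B^{n+1}$ is a submodule of $C^{n+1}$ and $G'$ is injective) produces $\psi'\in\Hom(C^{n+1};G')$ with $\psi'\circ\delta=\varphi'$ — here one should be careful that $\varphi'=\tilde\psi'\circ\delta'$ and $\delta=j\circ i\circ\delta'$, so $\psi'\circ\delta=\psi'\circ j\circ i\circ\delta'=\tilde\psi'\circ\delta'=\varphi'$ works out. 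It then remains to produce $\psi''\in\Hom(C^{n+1}/B^{n+1};G'')$ with $\beta\circ\psi'-\psi''\circ q=\varphi''$; equivalently $\psi''\circ q=\beta\circ\psi'-\varphi''$, so one must check that $\beta\circ\psi'-\varphi''$ vanishes on $B^{n+1}$ and then factor through $q$. The cocycle condition $\partial(\varphi',\varphi'')=0$ gives $\beta\circ\varphi'-\varphi''\circ\delta=0$, i.e. $\varphi''\circ\delta=\beta\circ\varphi'=\beta\circ\psi'\circ\delta$; I would leverage this together with $\psi'\circ\delta=\varphi'$ to show $(\beta\circ\psi'-\varphi'')$ kills $B^{n+1}=\IIm\delta$, hence descends to the required $\psi''$.

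I expect the right-hand surjectivity to be the only genuine obstacle; the left and middle exactness are formal diagram-chases with $q$ epic and $\delta\circ j=q\circ\delta=0$. The delicate point in the surjectivity argument is keeping the factorizations through $Z^n$, $B^{n+1}$, and the quotient $C^{n+1}/B^{n+1}$ straight and using injectivity of $G'$ (resp. $G''$) exactly where an extension is needed, precisely mirroring the computations in steps b and d$_1$ of the proof of Theorem 2.1.
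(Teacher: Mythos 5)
Your proposal is correct and takes essentially the same route as the paper's own proof: injectivity of $\sigma$ and $\omega\circ\sigma=0$ are the same formal checks using that $q$ is epic and $q\circ\delta=0$, the inclusion $\Ker\omega\subseteq\IIm\sigma$ is obtained by the same factorization of $\psi'$ through $q$, and surjectivity of $\omega$ onto $\Ker\xi$ proceeds exactly as in the paper by factoring $\varphi'$ through $B^{n+1}$, extending to $\psi'$ on $C^{n+1}$ via injectivity of $G'$, and descending $\beta\circ\psi'-\varphi''$ through $q$ using the cycle condition. There are no gaps.
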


\begin{proof}
	{\bf a. $\omega$ is an epimorphism.} If $(\varphi ' , \varphi '') \in Ker \xi,$ then $\xi(\varphi ' , \varphi '')=\bar{\varphi}=0$ and so $\varphi:Z^n \lra G$ is zero as well. On the other hand, $\alpha \circ \varphi =\varphi ' \circ j=0$. Therefore, there is a unique homomorphism $\tilde{\varphi}':B^{n+1} \lra G'$, such that $\varphi ' = \tilde{\varphi}' \circ \delta '$. Let $\psi ' : C^{n+1} \lra G'$ be an extension of the map $\tilde{\varphi} ' : B^{n+1} \lra G'$ (see the diagram \eqref{eq37}).
\begin{equation}\label{eq37}
\begin{tikzpicture}

\node (A) {};
\node (B) [node distance=2cm, right of=A] {$Z^n$};
\node (C) [node distance=2cm, right of=B] {$C^n$};
\node (D) [node distance=2cm, right of=C] {$B^{n+1}$};
\node (E) [node distance=2cm, right of=D] {$C^{n+1}$};
\node (F) [node distance=2cm, right of=E] {};

\draw[->] (B) to node [above]{$j$}(C);
\draw[->] (C) to node [above]{$\delta '$}(D);
\draw[->] (D) to node [above]{$j \circ i$}(E);

\node (A1) [node distance=2cm, below of=A] {$0$};
\node (B1) [node distance=2cm, below of=B] {$G$};
\node (C1) [node distance=2cm, below of=C] {$G'$};
\node (E1) [node distance=2cm, below of=E] {$G''$};
\node (F1) [node distance=2cm, below of=F] {$0$.};

\draw[->] (A1) to node [above]{}(B1);
\draw[->] (B1) to node [above]{$\alpha$}(C1);
\draw[->] (C1) to node [above]{$\beta$}(E1);
\draw[->] (E1) to node [above]{}(F1);

\draw[->] (B) to node [left]{$\varphi$}(B1);
\draw[->] (C) to node [left]{$\varphi '$}(C1);
\draw[dotted, ->] (D) to node [above]{$\bar{\varphi}'$}(C1);
\draw[dotted,->] (E) to node [above]{$\psi '$}(C1);
\draw[->] (E) to node [right]{$\varphi ''$}(E1);
\end{tikzpicture}
\end{equation}
If we consider the map $\beta \circ \psi ' - \varphi '': C^{n+1} \lra G'',$ then by $(\varphi ', \varphi '') \in \bar{Z}_n,$ we have $(\beta \circ \psi ' - \varphi'') \circ \delta = \beta \circ \psi' \circ \delta - \varphi '' \circ \delta = \beta \circ \varphi ' - \varphi '' \circ \delta=0$.  Since $(\beta \circ \psi ' - \varphi'') \circ \delta = (\beta \circ \psi ' - \varphi '') \circ j \circ i \circ \delta ' =0$ and $\delta '$ is an epimorphism, there is $(\beta \circ \psi ' - \psi'') \circ j \circ i =0.$ Therefore, there is a homomorphism $\psi '' :C^{n+1}/B^{n+1} \lra G''$ such that $\beta \circ \psi ' -\varphi '' =\psi '' \circ q$ and so $\varphi '' = \beta \circ \psi ' - \psi '' \circ q$ (see the diagram \eqref{eq38}).

\begin{equation}\label{eq38}
\begin{tikzpicture}

\node (A) {};
\node (B) [node distance=2cm, right of=A] {$B^{n+1}$};
\node (C) [node distance=2cm, right of=B] {$C^{n+1}$};
\node (D) [node distance=2cm, right of=C] {$C^{n+1}/B^{n+1}$};
\node (E) [node distance=2cm, right of=D] {$0$};

\draw[->] (B) to node [above]{$j \circ i$}(C);
\draw[->] (C) to node [above]{$q$}(D);
\draw[->] (D) to node [above]{}(E);

\node (C1) [node distance=2cm, below of=C] {$G''$};

\draw[->] (C) to node [left]{$\beta \circ \psi '-\varphi ''$}(C1);
\draw[dotted, ->] (D) to node [above]{$\psi''$}(C1);
\end{tikzpicture}
\end{equation}
Hence, $(\psi ' , \psi'') \in Hom(C^{n+1};G') \oplus Hom(C^{n+1}/B^{n+1};G'')$ and $\omega(\psi ', \psi'')=(\psi ' \circ \delta, \beta \circ \psi ' - \psi '' \circ q)=(\varphi ', \varphi '')$. So,  $\omega$ is an epimorphism.

{\bf b. There is an equality  $Im \sigma = Ker \omega$.} By the definition, we have $(\omega \circ \sigma) (\varphi)=\omega ( \sigma (\varphi))= \omega (\varphi \circ q, \beta \circ \varphi)=(\varphi \circ q \circ \delta, \beta \circ \varphi \circ q - \beta \circ \varphi \circ q) =(0,0)=0,$ because $q \circ \delta =0$. Therefore, $Im \sigma \subset Ker \omega.$ On the other hand, if $(\psi', \psi'')  \in Ker \omega,$ then  $\omega(\psi ', \psi '')=(\psi ' \circ \delta, \beta \circ \psi' - \psi '' \circ q)=0$ and so, $\psi ' \circ \delta =0$ and $\beta \circ \psi' =\psi '' \circ q.$ On the other hand, $\psi ' \circ \delta = \psi ' \circ j \circ i \circ \delta '=0$. Therefore,  we have $\psi' \circ j \circ i =0,$ because $\delta'$ is an epimorphism. So, there is a unique homomorphism $\varphi:C^{n+1}/B^{n+1} \lra G'$ such that $\psi'=\varphi \circ q.$ In this case, $\beta \circ \varphi \circ q=\beta \circ \psi' = \psi'' \circ q$ and since $q$ is an epimorphism, $\beta \circ \varphi=\psi''.$ Therefore, $\sigma(\varphi)=(\psi', \psi'')$ and so, $Ker \omega \subset  Im \sigma.$

{\bf c. $\sigma$ is a monomorphism.} If $\sigma(\varphi)=(\varphi \circ q, \beta \circ \varphi)=0,$ i.e.  $\varphi \circ q=0$ and since  $q$ is an epimorphism, we have $\varphi=0$.

\end{proof}

Let $\mathbf{C}^*=\{C^*_\gamma \}$ be a direct system of cochain complexes. Consider the corresponding inverse system  $\mathbf{C}_*=\{C_*^\gamma(\beta^{\#})\}=\{Hom(C^*_\gamma;\beta^\#)\}$ of chain complexes.

\begin{lemma}
	For each direct system $\mathbf{C}^*=\{C^*_\gamma \}$  of cochain complexes, there is an isomorphism
	\begin{equation}\label{eq39}
	Hom(\varinjlim C^*_\gamma  ; \beta^\#) \simeq \varprojlim Hom(C^*_\gamma;\beta^\#).
	\end{equation}
\end{lemma}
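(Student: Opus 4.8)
The plan is to reduce the statement to two elementary facts, both applied degreewise: first, that the contravariant functor $\Hom(-;B)$ carries direct limits to inverse limits; and second, that a finite direct sum commutes with arbitrary inverse limits. I would begin by recalling that the direct limit of a direct system of cochain complexes is formed degreewise, so that $(\varinjlim C^*_\gamma)^n = \varinjlim C^n_\gamma$ with coboundary $\delta = \varinjlim \delta_\gamma$. Then, by the definition of the cone (see \eqref{eq04}), the chain complex $\Hom(\varinjlim C^*_\gamma;\beta^{\#})$ has in degree $n$ the group $\Hom(\varinjlim C^n_\gamma;G') \oplus \Hom(\varinjlim C^{n+1}_\gamma;G'')$, with boundary given by the formula \eqref{eq05}.

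Next I would invoke the canonical natural isomorphism $\Hom(\varinjlim A_\gamma;B) \simeq \varprojlim \Hom(A_\gamma;B)$ — a homomorphism out of a direct limit is precisely a thread of compatible homomorphisms out of the terms, and this identification is natural in all variables. Applying it to each of the two summands above yields $\Hom(\varinjlim C^*_\gamma;\beta^{\#})_n \simeq \varprojlim \Hom(C^n_\gamma;G') \oplus \varprojlim \Hom(C^{n+1}_\gamma;G'')$. Since a finite direct sum is a finite product and finite products commute with inverse limits, the right-hand side is naturally isomorphic to $\varprojlim\bigl(\Hom(C^n_\gamma;G') \oplus \Hom(C^{n+1}_\gamma;G'')\bigr)$, which — the inverse limit of an inverse system of chain complexes being itself computed degreewise — is exactly the degree-$n$ term of $\varprojlim \Hom(C^*_\gamma;\beta^{\#})$.

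It then remains to check that the resulting degreewise isomorphism intertwines the boundary operators, i.e.\ that it is an isomorphism of chain complexes, and similarly that it is compatible with the bonding morphisms of the inverse system. This is forced by naturality: on both sides the boundary is the single formula $\partial(\varphi',\varphi'') = (\varphi'\circ\delta,\ \beta\circ\varphi' - \varphi''\circ\delta)$, built from the coboundary (which on $\varinjlim C^*_\gamma$ is $\varinjlim \delta_\gamma$), from post-composition with $\beta$, and from restriction along the structure maps of the direct system — all operations respected by the identifications used above. The only actual work is bookkeeping: tracing a thread $(\varphi'_\gamma,\varphi''_\gamma)_\gamma$ through the two identifications and confirming termwise that the differentials and the projection maps match. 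I do not expect any genuine obstacle here — the lemma is essentially the assertion that the cone construction and the first-variable $\Hom$ are each compatible with the colimit–limit duality, assembled degreewise.
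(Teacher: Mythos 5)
Your proposal is correct and follows essentially the same route as the paper's proof: both apply the canonical isomorphism $\Hom(\varinjlim A_\gamma;B)\simeq\varprojlim\Hom(A_\gamma;B)$ degreewise to each of the two summands of the cone and then observe compatibility with the differentials. Your write-up is in fact slightly more careful than the paper's, which leaves the commutation of the finite direct sum with the inverse limit and the check on the boundary operators implicit.
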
 

\begin{proof}
	Consider a chain complex
	\begin{equation}\label{eq40}
	Hom(\varinjlim C^*_\gamma;\beta^\#)=\{Hom(\varinjlim C^n_\gamma; G') \oplus Hom(\varinjlim C^{n+1}_\gamma;G''), \partial \},
	\end{equation} 
	where $\partial (\varphi ', \varphi '')= (\varphi ' \circ \delta , \beta \circ \varphi ' - \varphi '' \circ \delta ).$ Note that $\delta = \varinjlim \delta_\gamma : \varinjlim C_\gamma^{n-1} \lra \varinjlim C_\gamma^n,$ where $\delta_\gamma :C_\gamma^{n-1} \lra  C_\gamma^n$ is the coboundary map of the cochain complex $C^*_\gamma.$ Since for any $G$ there is an isomorphism  $Hom(\varinjlim C^*_\gamma;G) \simeq \varprojlim Hom(C^*_\gamma;G)$, we have
	\begin{equation}\label{eq41}	
	Hom(\varinjlim C_\gamma^n;G') \oplus Hom(\varinjlim C_\gamma^{n+1};G'') \simeq \varprojlim Hom(C_\gamma^n;G') \oplus \varprojlim Hom(C_\gamma^{n+1};G'').
	\end{equation}
\end{proof}

\begin{lemma}
	If $f^\#:C^* \lra C'^*$ is a homomorphism of cochain complexes, then there is a commutative diagram:
	\begin{equation}\label{eq42}	
	\begin{tikzpicture}
	
	\node (A) {$0$};
	\node (B) [node distance=2.5cm, right of=A] {$Hom(C'^{n+1}/B'^{n+1};G')$};
	\node (C) [node distance=5.5cm, right of=B] {$Hom(C'^{n+1};G') \oplus Hom(C'^{n+1}/B'^{n+1};G'')$};
	\node (D) [node distance=4.5cm, right of=C] {$Ker \xi'$};
	\node (E) [node distance=1.5cm, right of=D] {$0$};
	
	\draw[->] (A) to node [left]{}(B);
	\draw[->] (B) to node [above]{$\tau '$}(C);
	\draw[->] (C) to node [above]{$\mu '$}(D);
	\draw[->] (D) to node [left]{}(E);
	
	\node (A1) [node distance=2cm, below of=A] {$0$};
	\node (B1) [node distance=2cm, below of=B] {$Hom(C^{n+1}/B^{n+1};G')$};
	\node (C1) [node distance=2cm, below of=C] {$Hom(C^{n+1};G') \oplus Hom(C^{n+1}/B^{n+1};G'')$};
	\node (D1) [node distance=2cm, below of=D] {$Ker \xi$};
	\node (E1) [node distance=2cm, below of=E] {$0.$};
	
	\draw[->] (A1) to node [left]{}(B1);
	\draw[->] (B1) to node [above]{$\tau$}(C1);
	\draw[->] (C1) to node [above]{$\mu$}(D1);
	\draw[->] (D1) to node [left]{}(E1);
	
	\draw[->] (B) to node [left]{$\tilde{f}_\#$}(B1);
	\draw[->] (C) to node [left]{$(f_\#, \tilde{f}_\#)$}(C1);
	\draw[->] (D) to node [left]{$\tilde{f}$}(D1);

	\end{tikzpicture}
	\end{equation}
\end{lemma}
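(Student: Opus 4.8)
The plan is to take for the three vertical arrows the maps canonically induced by the cochain map $f^\#$, to check that they are well defined --- the only point that is not purely formal being that the right-hand arrow actually lands in $\Ker\xi$, not merely in $\bar{Z}_n$ --- and then to verify commutativity of the two squares by direct substitution. Exactness of the two rows is nothing but the short exact sequence \eqref{eq36}, applied once to $C'^*$ and once to $C^*$.

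First I would record what the cochain map supplies. Write $f_n\colon C^n\to C'^n$ for the components of $f^\#$, so that $\delta'\circ f_n=f_{n+1}\circ\delta$ for every $n$. This identity shows that $f_n$ carries $Z^n$ into $Z'^n$ and that $f_{n+1}$ carries $B^{n+1}$ into $B'^{n+1}$; hence $f_{n+1}$ induces a homomorphism $\widehat{f}_{n+1}\colon C^{n+1}/B^{n+1}\to C'^{n+1}/B'^{n+1}$ characterised by $q'\circ f_{n+1}=\widehat{f}_{n+1}\circ q$, where $q,q'$ denote the quotient maps. Applying $\Hom(-;G')$ and $\Hom(-;G'')$ one gets precomposition maps: precomposing with $f_{n+1}$ gives $f_\#$, and precomposing with $\widehat{f}_{n+1}$ gives $\tilde{f}_\#$ (over either $G'$ or $G''$). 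One then declares the left vertical arrow to be $\tilde{f}_\#\colon\Hom(C'^{n+1}/B'^{n+1};G')\to\Hom(C^{n+1}/B^{n+1};G')$, $\psi\mapsto\psi\circ\widehat{f}_{n+1}$, and the middle vertical arrow to be $(f_\#,\tilde{f}_\#)\colon(\psi',\psi'')\mapsto(\psi'\circ f_{n+1},\,\psi''\circ\widehat{f}_{n+1})$. For the right vertical arrow one takes the restriction $\tilde{f}$ of the chain map $\bar{f}_n\colon C'_n(\beta_\#)\to C_n(\beta_\#)$, $\bar{f}_n(\varphi',\varphi'')=(\varphi'\circ f_n,\,\varphi''\circ f_{n+1})$, already introduced in the Introduction.

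The step that needs care is that $\tilde{f}$ is well defined, i.e. that $\bar{f}_n$ carries $\Ker\xi'$ into $\Ker\xi$. Since $\bar{f}_n$ is a chain map it already sends the cycle group $\bar{Z}'_n$ into $\bar{Z}_n$, so it is enough to extract from the construction of $\xi$ the explicit description of its kernel: a cycle $(\varphi',\varphi'')\in\bar{Z}_n$ lies in $\Ker\xi$ precisely when $\varphi'\circ j=0$. Indeed, by construction $\varphi'\circ j=\alpha\circ\varphi$; since $\alpha$ is a monomorphism, $\varphi'\circ j=0$ is equivalent to $\varphi=0$, and since $p\colon Z^n\to H^n(C^*)$ is an epimorphism this is in turn equivalent to the induced map $\bar{\varphi}=\xi(\varphi',\varphi'')$ being zero. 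Now let $(\varphi',\varphi'')\in\Ker\xi'$, so $\varphi'\circ j'=0$; as $f_n$ restricts to $f_n|_{Z^n}\colon Z^n\to Z'^n$ with $j'\circ f_n|_{Z^n}=f_n\circ j$, one obtains $(\varphi'\circ f_n)\circ j=\varphi'\circ(j'\circ f_n|_{Z^n})=(\varphi'\circ j')\circ f_n|_{Z^n}=0$, whence $\bar{f}_n(\varphi',\varphi'')\in\Ker\xi$ and $\tilde{f}$ is well defined.

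It then remains to verify commutativity, which is a routine substitution using only $\delta'\circ f_n=f_{n+1}\circ\delta$ and $q'\circ f_{n+1}=\widehat{f}_{n+1}\circ q$. For the left square, applying $(f_\#,\tilde{f}_\#)\circ\tau'$ to $\varphi\in\Hom(C'^{n+1}/B'^{n+1};G')$ gives $(\varphi\circ q'\circ f_{n+1},\,\beta\circ\varphi\circ\widehat{f}_{n+1})=(\varphi\circ\widehat{f}_{n+1}\circ q,\,\beta\circ(\varphi\circ\widehat{f}_{n+1}))$, which is exactly $\tau(\tilde{f}_\#(\varphi))$. For the right square, applying $\tilde{f}\circ\mu'$ to $(\psi',\psi'')$ yields
\[
\bar{f}_n\bigl(\psi'\circ\delta',\,\beta\circ\psi'-\psi''\circ q'\bigr)=\bigl(\psi'\circ\delta'\circ f_n,\,\beta\circ\psi'\circ f_{n+1}-\psi''\circ q'\circ f_{n+1}\bigr),
\]
whereas $\mu\circ(f_\#,\tilde{f}_\#)$ yields $\mu(\psi'\circ f_{n+1},\,\psi''\circ\widehat{f}_{n+1})=\bigl(\psi'\circ f_{n+1}\circ\delta,\,\beta\circ\psi'\circ f_{n+1}-\psi''\circ\widehat{f}_{n+1}\circ q\bigr)$, and the two agree coordinatewise by the two identities above. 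Combined with exactness of the rows coming from \eqref{eq36}, this gives the asserted commutative diagram. The main obstacle is exactly this one non-formal ingredient --- pinning down $\Ker\xi$ concretely enough to see that $\tilde{f}$ is well defined; everything else is mechanical.
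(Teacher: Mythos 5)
Your proof is correct and follows essentially the same route as the paper: the same induced vertical maps $\tilde{f}_\#$ and $(f_\#,\tilde{f}_\#)$, and the same coordinatewise verification of the two squares via $\delta'\circ f_n=f_{n+1}\circ\delta$ and $q'\circ f_{n+1}=\widehat{f}_{n+1}\circ q$. The one thing you add beyond the paper's argument is the explicit identification $\Ker\xi=\{(\varphi',\varphi'')\in\bar{Z}_n:\varphi'\circ j=0\}$ to check that $\tilde{f}$ is well defined, a point the paper leaves implicit (it also follows from commutativity of the right square together with surjectivity of $\mu'$); this is a welcome clarification, not a divergence.
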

\begin{proof} Note that homomorphisms $\tilde{f}_{\#}:Hom(C'^{n+1}/B'^{n+1};G' ) \lra Hom(C^{n+1}/B^{n+1};G')$	and $(f_{\#},\tilde{f}_{\#}):Hom(C'^{n+1};G') \oplus Hom(C'^{n+1}/B'^{n+1};G'') \lra Hom(C^{n+1};G') \oplus Hom(C^{n+1}/B^{n+1};G'')$ are naturally defined by $\tilde{f}_\#(\varphi') = \varphi' \circ \tilde{f}_{n+1}$ and $(f_\#, \tilde{f}_\#)(\varphi ',\varphi '')=(\varphi ' \circ f_{n+1}, \varphi '' \circ \tilde{f}_{n+1}),$ where $\tilde{f}_{n+1}:C^{n+1}/B^{n+1} \lra C'^{n+1}/B'^{n+1}$ is induced by $f_{n+1}:C^{n+1} \lra C'^{n+1}.$
	
{\bf a. $\tau \circ \tilde{f}_\# = (f_\#, \tilde{f}_\#) \circ \tau '$.} By the definition, we have $\left(\tau \circ \tilde{f}_\# \right)(\varphi ')=\tau \left( \tilde{f}_\# (\varphi ') \right)=\tau \left(\varphi ' \circ \tilde{f}_{n+1}\right)=\left(\varphi ' \circ \tilde{f}_{n+1} \circ q, \beta \circ \varphi ' \circ \tilde{f}_{n+1} \right) $ and $\left((f_\#,\tilde{f}_\#) \circ \tau ' \right) \left(\varphi '\right) = \left(f_\#,\tilde{f}_\#\right) \left( \tau ' \left(\varphi '\right) \right)=\left(f_\#,\tilde{f}_\#\right) \left(\varphi ' \circ q',\beta \circ \varphi'\right)=(\varphi ' \circ  q' \circ f_{n+1},\beta \circ  \varphi ' \circ \tilde{f}_{n+1}).$  Since $\tilde{f}_{n+1} \circ q =q' \circ f_{n+1},$ we have $\varphi ' \circ \tilde{f}_{n+1} \circ q = \varphi ' \circ q' \circ f_{n+1}.$ Hence, $\tau \circ \tilde{f}_\# = (f_\#, \tilde{f}_\#) \circ \tau '$.  

{\bf b. $\mu \circ (f_\#, \tilde{f}_\#)=\tilde{f} \circ \mu '$.} By the definition, we have $\left(\mu \circ (f_\#, \tilde{f}_\#)\right)(\varphi ' , \varphi '' )=\mu \left((f_\#, \tilde{f}_\#)(\varphi ', \varphi '')\right)= \mu \left(\varphi ' \circ f_{n+1}, \varphi '' \circ \tilde{f}_{n+1}\right)=\left(\varphi ' \circ f_{n+1} \circ \delta, \beta \circ \varphi ' \circ f_{n+1}- \varphi'' \circ \tilde{f}_{n+1} \circ q \right)$ and $\left( \tilde{f} \circ \mu ' \right)(\varphi ', \varphi '')= \tilde{f} \left( \mu ' (\varphi ', \varphi '')\right)=\tilde{f} \left(\varphi ' \circ \delta ', \beta \circ \varphi' - \varphi '' \circ q' \right)=(\varphi ' \circ \delta ' \circ f_n, \beta \circ \varphi ' \circ f_{n+1}-\varphi '' \circ  q' \circ f_{n+1}).$ Since $\tilde{f}_{n+1} \circ q= q' \circ f_{n+1}$ and $\delta ' \circ f_n = f_{n+1} \circ \delta,$ there are {\color{black}equalities } $ \varphi ' \circ f_{n+1} \circ \delta = \varphi ' \circ \delta ' \circ f_n$ and  $\varphi '' \circ \tilde{f}_{n+1} \circ q= \varphi '' \circ q' \circ f_{n+1}.$ Hence, $\mu \circ (f_\#, \tilde{f}_\#)=\tilde{f} \circ \mu '$.
\end{proof}
Let $\{Hom(C_\gamma^{n+1};G') \oplus Hom(C_\gamma^{n+1}/B_\gamma^{n+1}; G'') \}$ be an inverse system generated by the direct system $\{C^{n+1}_\gamma \}$. It is clear that for each $\gamma$ there is an exact sequence
\begin{equation}\label{eq43}
\begin{tikzpicture}

\node (A) {$0$};
\node (B) [node distance=2.5cm, right of=A] {$Hom(C^{n+1}_\gamma;G')$};
\node (C) [node distance=5cm, right of=B] {$Hom(C^{n+1}_\gamma;G') \oplus Hom(C^{n+1}_\gamma/B^{n+1}_\gamma;G'')$};
\node (D) [node distance=5cm, right of=C] {$Hom(C^{n+1}_\gamma/B^{n+1}_\gamma;G'')$};
\node (E) [node distance=2.5cm, right of=D] {$0$.};

\draw[->] (A) to node [above]{}(B);
\draw[->] (B) to node [above]{$\tau$}(C);
\draw[->] (C) to node [above]{$\mu$}(D);
\draw[->] (D) to node [above]{}(E);

\end{tikzpicture}
\end{equation}
Hence, by the main property of the derived functors $\varprojlim ^{(i)}$ there is a long exact sequence:
\begin{equation}\label{eq44}
\begin{tikzpicture}

\node (A) {$\dots$};
\node (B) [node distance=2.5cm, right of=A] {$\varprojlim ^{(i)} Hom(C^{n+1}_\gamma;G')$};
\node (C) [node distance=5.5cm, right of=B] {$\varprojlim ^{(i)} \left( Hom(C^{n+1}_\gamma;G') \oplus Hom(C^{n+1}_\gamma/B^{n+1}_\gamma;G'')\right)$};
\node (D) [node distance=6cm, right of=C] {$\varprojlim ^{(i)} Hom(C^{n+1}_\gamma/B^{n+1}_\gamma;G'')$};
\node (E) [node distance=3cm, right of=D] {$\dots$.};

\draw[->] (A) to node [above]{}(B);
\draw[->] (B) to node [above]{$\tilde{\tau}$}(C);
\draw[->] (C) to node [above]{$\tilde{\mu}$}(D);
\draw[->] (D) to node [above]{}(E);

\end{tikzpicture}
\end{equation}
On the other hand, since for each injective {\color{black}group}  $G$,  $\varprojlim ^{(i)} \{Hom(C^{n+1};G)\}=0,$ $i\ge 1$ (see Lemma 1.3 \cite{6}), we obtain the following result. 
\begin{corollary}
	For each {\color{black}pair of injective groups}$G'$ and $G''$, there is the following equality
	\begin{equation}\label{eq45}
	{\varprojlim} ^ {(i)} \left( Hom(C^{n+1}_\gamma;G') \oplus Hom(C^{n+1}_\gamma/B^{n+1}_\gamma;G'') \right)=0, ~~ i\ge 1.
	\end{equation}
\end{corollary}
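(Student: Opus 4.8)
The plan is to deduce the vanishing formally from Lemma~1.3 of \cite{6} together with the long exact sequence \eqref{eq44}, which has just been extracted from the short exact sequence \eqref{eq43} of inverse systems. The point is that the two outer terms of \eqref{eq44} are derived limits of inverse systems of the shape $\{\Hom(D_\gamma;G)\}$ coming from a direct system $\{D_\gamma\}$ of abelian groups with $G$ injective, and for such systems Lemma~1.3 of \cite{6} forces $\varprojlim^{(i)}=0$ in positive degrees.

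First I would note that $\{C^{n+1}_\gamma/B^{n+1}_\gamma\}$ is genuinely a direct system of abelian groups: each bonding homomorphism of $\mathbf{C}^*$ is a cochain map, so its degree-$(n+1)$ component carries $B^{n+1}_\gamma=\IIm\delta_\gamma$ into $B^{n+1}_{\gamma'}=\IIm\delta_{\gamma'}$ and hence induces a homomorphism $C^{n+1}_\gamma/B^{n+1}_\gamma\lra C^{n+1}_{\gamma'}/B^{n+1}_{\gamma'}$, compatibly with compositions. Applying the contravariant functor $\Hom(-;G'')$ therefore produces the honest inverse system $\{\Hom(C^{n+1}_\gamma/B^{n+1}_\gamma;G'')\}$ appearing in \eqref{eq44}, while $\{\Hom(C^{n+1}_\gamma;G')\}$ is the inverse system attached to the direct system $\{C^{n+1}_\gamma\}$.

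Then I would apply Lemma~1.3 of \cite{6} to each summand: since $G'$ is injective it yields $\varprojlim^{(i)}\Hom(C^{n+1}_\gamma;G')=0$ for all $i\ge 1$, and since $G''$ is injective it yields $\varprojlim^{(i)}\Hom(C^{n+1}_\gamma/B^{n+1}_\gamma;G'')=0$ for all $i\ge 1$. Plugging these into the long exact sequence \eqref{eq44} shows that for each $i\ge 1$ the middle term $\varprojlim^{(i)}\bigl(\Hom(C^{n+1}_\gamma;G')\oplus\Hom(C^{n+1}_\gamma/B^{n+1}_\gamma;G'')\bigr)$ is squeezed between two zero groups, hence is itself zero. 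Alternatively one can avoid \eqref{eq44} entirely and simply invoke that $\varprojlim^{(i)}$ commutes with finite direct sums, so the middle term is the direct sum of the two vanishing groups.

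There is no real obstacle: once Lemma~1.3 of \cite{6} is available, the statement is a formal consequence of the exactness of \eqref{eq44} (or of additivity of $\varprojlim^{(i)}$). The only step deserving a word of care is checking that passing to the quotients $C^{n+1}_\gamma/B^{n+1}_\gamma$ respects the direct-system structure, which is precisely what legitimizes applying Lemma~1.3 of \cite{6} to the second summand.
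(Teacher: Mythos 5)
Your proposal is correct and follows the paper's own route: the paper likewise extracts the long exact sequence \eqref{eq44} from the short exact sequence \eqref{eq43} and invokes Lemma~1.3 of \cite{6} (applicable to both summands since $G'$ and $G''$ are injective and both $\{C^{n+1}_\gamma\}$ and $\{C^{n+1}_\gamma/B^{n+1}_\gamma\}$ are direct systems) to kill the two outer terms for $i\ge 1$, squeezing the middle term to zero. Your remark that one could instead just use additivity of $\varprojlim^{(i)}$ on finite direct sums is a valid, marginally shorter variant of the same argument.
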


Using the obtained result, we will prove the following lemma.

\begin{lemma} 
	For each integer $i \ge1$, there is an equality
	\begin{equation}\label{eq46}
	{\varprojlim} ^ {(i)} Ker \xi_\gamma = 0.
	\end{equation}
\end{lemma}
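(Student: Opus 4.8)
The plan is to obtain \eqref{eq46} from the short exact sequence \eqref{eq36} by promoting it to a short exact sequence of \emph{inverse systems} and then running the long exact sequence of the derived functors $\varprojlim^{(i)}$.

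First I would observe that, because $\Hom(-;G)$ and the assignment $C^*\mapsto\Ker\xi$ are contravariant in the cochain complex, the direct system $\mathbf{C}^*=\{C^*_\gamma\}$ turns the three terms of \eqref{eq36} into inverse systems; write $\mathbf{P}=\{\Hom(C^{n+1}_\gamma/B^{n+1}_\gamma;G')\}$ and $\mathbf{Q}=\{\Hom(C^{n+1}_\gamma;G')\oplus\Hom(C^{n+1}_\gamma/B^{n+1}_\gamma;G'')\}$ for the first two. The commutative diagram \eqref{eq42} says precisely that the maps $\sigma$ and $\omega$ of \eqref{eq36} commute with the bonding homomorphisms of these systems, so \eqref{eq36}, taken over all $\gamma$, is a short exact sequence of inverse systems $0\lra\mathbf{P}\os{\sigma}{\lra}\mathbf{Q}\os{\omega}{\lra}\{\Ker\xi_\gamma\}\lra 0$.

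Next I would apply the functors $\varprojlim^{(i)}$ to this short exact sequence. Its long exact sequence contains, for each fixed $i\ge1$, the exact segment
\[
{\varprojlim}^{(i)}\mathbf{Q}\lra{\varprojlim}^{(i)}\{\Ker\xi_\gamma\}\lra{\varprojlim}^{(i+1)}\mathbf{P}.
\]
The left-hand group vanishes by \eqref{eq45}, because $G'$ and $G''$ are injective. The right-hand group also vanishes: $\{C^{n+1}_\gamma/B^{n+1}_\gamma\}$ is a direct system, $G'$ is injective, and $\varprojlim^{(j)}\{\Hom(D_\gamma;G')\}=0$ for every $j\ge1$ by Lemma 1.3 of \cite{6} (the relevant index being $j=i+1\ge2$). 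Exactness of the segment then forces $\varprojlim^{(i)}\{\Ker\xi_\gamma\}=0$ for every $i\ge1$, which is \eqref{eq46}.

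The argument is essentially formal once \eqref{eq36} is known to be natural in the cochain map; the only point that genuinely requires checking is this naturality — that $\sigma$ and $\omega$ are compatible with the bonding homomorphisms — and it has already been recorded in \eqref{eq42}. So I do not anticipate a substantial obstacle, only the careful assembly of ingredients already in hand.
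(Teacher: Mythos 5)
Your proposal is correct and follows essentially the same route as the paper: both pass from the short exact sequence \eqref{eq36} (made natural via the diagram \eqref{eq42}) to the long exact sequence of the derived functors $\varprojlim^{(i)}$, and then kill the two outer terms using Corollary 2 and Lemma 1.3 of \cite{6}. Your explicit attention to the naturality of $\sigma$ and $\omega$ is a point the paper leaves implicit, but it is not a different argument.
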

\begin{proof}
	By Lemma 1, for each $\xi_\gamma$, there is  a short exact sequence
	\begin{equation}\label{eq47}
	\begin{tikzpicture}
	
	\node (A) {$0$};
	\node (B) [node distance=2.5cm, right of=A] {$Hom(C^{n+1}_\gamma/B^{n+1}_\gamma;G')$};
	\node (C) [node distance=5cm, right of=B] {$Hom(C^{n+1}_\gamma;G') \oplus Hom(C^{n+1}_\gamma/B^{n+1}_\gamma;G'')$};
	\node (D) [node distance=4cm, right of=C] {$Ker \xi_\gamma$};
	\node (E) [node distance=1.5cm, right of=D] {$0$.};

	\draw[->] (A) to node [above]{}(B);
	\draw[->] (B) to node [above]{$\sigma_\gamma$}(C);
	\draw[->] (C) to node [above]{$\omega_\gamma$}(D);
	\draw[->] (D) to node [above]{}(E);
	
	\end{tikzpicture}
	\end{equation}
By the main property of a derived functor $\varprojlim ^{(i)}$, there is a long exact sequence	
	
	\begin{equation}\label{eq48}
	\begin{tikzpicture}
	
	\node (A) {$\dots$};
	\node (B) [node distance=3cm, right of=A] {$ \varprojlim ^{(i)} Hom(C^{n+1}_\gamma/B^{n+1}_\gamma;G')$};
	\node (C) [node distance=6cm, right of=B] {$\varprojlim ^{(i)} \left( Hom(C^{n+1}_\gamma;G') \oplus Hom(C^{n+1}_\gamma/B^{n+1}_\gamma;G'') \right)$};
	\node (D) [node distance=5cm, right of=C] {$ \varprojlim ^{(i)}  \Ker \xi_\gamma $};
	\node (E) [node distance=2cm, right of=D] {$\dots$.};
	
	\draw[->] (A) to node [above]{}(B);
	\draw[->] (B) to node [above]{}(C);
	\draw[->] (C) to node [above]{}(D);
	\draw[->] (D) to node [above]{}(E);
	
	\end{tikzpicture}
	\end{equation}
	By Lemma 1.3 \cite{6} for each $i \ge 0,$ there is an equality ${\varprojlim} ^ {(i)} Hom(C_\gamma^{n+1}/B_\gamma^{n+1};G') = 0$ and by Corollary 2,  for each $i \ge 1$ we obtain
	\begin{equation}\label{eq49}
	{\varprojlim} ^ {(i)} \left(Hom (C_\gamma^{n+1};G') \oplus Hom(C_\gamma^{n+1}/B_\gamma^{n+1};G'') \right)= 0.
	\end{equation}
Hence, by the long exact sequence \eqref{eq48} we obtain that ${\varprojlim} ^ {(i)}  Ker \xi_\gamma =0$, $i\ge 1$.
\end{proof}
\begin{corollary}
	For each integer $i\ge1,$ there is an isomorphism
	\begin{equation}\label{eq50}
	{\varprojlim} ^ {(i)} \bar{Z}_n^\gamma \simeq  {\varprojlim} ^ {(i)} Hom(H^n(C^*_\gamma) ;G).
	\end{equation}
\end{corollary}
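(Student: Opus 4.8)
The plan is to present $\{\bar{Z}_n^\gamma\}$ and $\{\Hom(H^n(C^*_\gamma);G)\}$ as the middle and right-hand terms of a short exact sequence of inverse systems whose left-hand term is $\{\Ker\xi_\gamma\}$, and then to run the long exact sequence of the derived limit functors $\varprojlim^{(i)}$ against the vanishing established in the preceding lemma; the whole thing then collapses to a short homological argument.

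\textbf{Setting up the short exact sequence of inverse systems.} First I would note that the construction $(\varphi',\varphi'')\mapsto\bar\varphi$ defining $\xi$ in the proof of the Universal Coefficient Formula is natural in the cochain complex: a cochain map $f^{\#}\colon C^{*}\to C'^{*}$ induces $\bar f\colon\bar Z_n'\to\bar Z_n$ (the restriction to cycles of the induced chain map of cones), the contravariant map on $\Hom(H^n(-);G)$, and the map $\tilde f\colon\Ker\xi'\to\Ker\xi$ furnished by the functoriality lemma above, and all of these fit into a commutative ladder with $\xi$ and with the inclusions $\Ker\xi\hookrightarrow\bar Z_n$. Applying this to the direct system $\mathbf{C}^{*}=\{C^{*}_\gamma\}$ turns $\{\Ker\xi_\gamma\}$, $\{\bar Z_n^\gamma\}$ and $\{\Hom(H^n(C^{*}_\gamma);G)\}$ into inverse systems with morphisms of inverse systems between them. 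Since, by parts \textbf{a} and \textbf{b} of the proof of the Universal Coefficient Formula, each $\xi_\gamma$ is surjective with kernel $\Ker\xi_\gamma$, the sequence
\[
0\longrightarrow\{\Ker\xi_\gamma\}\longrightarrow\{\bar Z_n^\gamma\}\overset{\xi}{\longrightarrow}\{\Hom(H^n(C^{*}_\gamma);G)\}\longrightarrow0
\]
is a short exact sequence of inverse systems.

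\textbf{Running the long exact sequence.} Next I would feed this into the long exact sequence of the derived functors $\varprojlim^{(i)}$:
\[
\cdots\to{\varprojlim}^{(i)}\Ker\xi_\gamma\to{\varprojlim}^{(i)}\bar Z_n^\gamma\to{\varprojlim}^{(i)}\Hom(H^n(C^{*}_\gamma);G)\to{\varprojlim}^{(i+1)}\Ker\xi_\gamma\to\cdots.
\]
By the preceding lemma, ${\varprojlim}^{(i)}\Ker\xi_\gamma=0$ for every $i\ge1$; hence for each $i\ge1$ the two terms ${\varprojlim}^{(i)}\Ker\xi_\gamma$ and ${\varprojlim}^{(i+1)}\Ker\xi_\gamma$ flanking the arrow ${\varprojlim}^{(i)}\bar Z_n^\gamma\to{\varprojlim}^{(i)}\Hom(H^n(C^{*}_\gamma);G)$ both vanish, so that arrow is an isomorphism. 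This is exactly the isomorphism asserted in the corollary.

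\textbf{Where the difficulty lies.} The homological step is entirely formal, so the one point that genuinely needs checking is the naturality claimed in the first step: that $\xi$ commutes with the transition homomorphisms of $\{\bar Z_n^\gamma\}$ and of $\{\Hom(H^n(C^{*}_\gamma);G)\}$. Verifying this amounts to unwinding how $\bar\varphi$ is produced from $(\varphi',\varphi'')$ in the proof of the Universal Coefficient Formula (via the factorization $\varphi'\circ j=\alpha\circ\varphi$ and the map $\bar\varphi\colon H^n(C^*)\to G$ it induces) and matching it with the commutative square of the functoriality lemma; it is a diagram chase with no new ingredients.
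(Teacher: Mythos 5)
Your proposal is correct and follows exactly the paper's own argument: the short exact sequence of inverse systems $0\to\{\Ker\xi_\gamma\}\to\{\bar Z_n^\gamma\}\to\{\Hom(H^n(C^*_\gamma);G)\}\to 0$ coming from the surjectivity of each $\xi_\gamma$, the associated long exact sequence of $\varprojlim^{(i)}$, and the vanishing $\varprojlim^{(i)}\Ker\xi_\gamma=0$ for $i\ge1$ from the preceding lemma. Your extra attention to the naturality of $\xi$ in the cochain complex is a point the paper leaves implicit, but the route is the same.
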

\begin{proof}
	By {\bf a.} of Theorem 1, there is an epimorphism $\xi_\gamma :\bar{Z}_n^\gamma \lra Hom(H^n(C^*_\gamma) ;G).$
	Therefore, the following sequence is exact
	\begin{equation}\label{eq51}
	\begin{tikzpicture}
	
	\node (A) {$0$};
	\node (B) [node distance=2cm, right of=A] {$Ker \xi_\gamma$};
	\node (C) [node distance=2cm, right of=B] {$\bar{Z}_n^\gamma$};
	\node (D) [node distance=2cm, right of=C] {$Hom(H^n(C^*_\gamma);G)$};
	\node (E) [node distance=2cm, right of=D] {$0$.};
	
	\draw[->] (A) to node [above]{}(B);
	\draw[->] (B) to node [above]{}(C);
	\draw[->] (C) to node [above]{$\xi_\gamma$}(D);
	\draw[->] (D) to node [above]{}(E);
	
	\end{tikzpicture}
	\end{equation}
	Consequently, it induces the following long exact sequence
	\begin{equation}\label{eq52}
	\begin{tikzpicture}
	
	\node (A) {$\dots$};
	\node (B) [node distance=2cm, right of=A] {$ {\varprojlim} ^ {(i)}  Ker \xi_\gamma $};
	\node (C) [node distance=2.5cm, right of=B] {${\varprojlim} ^ {(i)} \bar{Z}_n^\gamma $};
	\node (D) [node distance=3cm, right of=C] {$ {\varprojlim} ^ {(i)} Hom(H^n(C^*_\gamma);G) $};
	\node (E) [node distance=3.5cm, right of=D] {${\varprojlim} ^ {(i+1)}  Ker \xi_\gamma $};
	\node (F) [node distance=2cm, right of=E] {$\dots$.};
	
	\draw[->] (A) to node [above]{}(B);
	\draw[->] (B) to node [above]{}(C);
	\draw[->] (C) to node [above]{}(D);
	\draw[->] (D) to node [above]{}(E);
	\draw[->] (E) to node [above]{}(F);
	
	\end{tikzpicture}
	\end{equation}
	On the other hand, by Lemma 4,  ${\varprojlim} ^ {(i)} \{Ker \xi_\gamma \}= 0, ~~ i\ge 1.$ Therefore, for $i\ge 1,$ we have  an isomorphism ${\varprojlim} ^ {(i)} \bar{Z}_n^\gamma \simeq  {\varprojlim} ^ {(i)} Hom(H^n(C^*_\gamma) ;G).$
\end{proof}

Note that for each $\gamma$, there is a natural commutative triangle
	\begin{equation}\label{eq53}
	\begin{tikzpicture}
	
	\node (A) {$\bar{Z}_n^\gamma$};
	\node (B) [node distance=2cm, right of=A] {};
	\node (C) [node distance=2cm, right of=B] {$Hom(H^n(C^*_\gamma);G)$};
	\node (B') [node distance=2cm, below of=B] {$\bar{H}_n(C^*_\gamma ; G)$.};

	\draw[->] (A) to node [above]{$\xi_\gamma$}(C);
	\draw[->] (A) to node [above]{$\bar{p}_\gamma$}(B');
	\draw[->] (B') to node [above]{$\bar{\xi}_\gamma$}(C);
	\end{tikzpicture}
	\end{equation}
Therefore, if we take ${\varprojlim} ^ {(i)}$ of this diagram, then by Corollary 2, we obtain the following result. 

\begin{corollary}
	For each integer $i\ge 1$, ${\varprojlim} ^ {(i)} \bar{Z}_n^\gamma $ is a direct summand of ${\varprojlim} ^ {(i)} \bar{H}_n(C^*_\gamma ;G) $ and the projection of ${\varprojlim} ^ {(i)} \bar{H}_n(C^*_\gamma ;G) $ onto ${\varprojlim} ^ {(i)}\bar{Z}_n^\gamma  $ is natural.
\end{corollary}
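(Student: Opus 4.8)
The plan is to apply the derived functors ${\varprojlim}^{(i)}$ to the inverse system of commutative triangles \eqref{eq53} and to read off the asserted splitting from the fact, established in the preceding Corollary, that ${\varprojlim}^{(i)}\xi_\gamma$ is an isomorphism for $i\ge 1$. Since $\xi_\gamma$, $\bar{p}_\gamma$ and $\bar{\xi}_\gamma$ are natural in $\gamma$, the triangles \eqref{eq53} constitute a morphism of inverse systems; applying ${\varprojlim}^{(i)}$ gives, for each $i\ge 1$, homomorphisms $f_i:={\varprojlim}^{(i)}\bar{p}_\gamma\colon{\varprojlim}^{(i)}\bar{Z}_n^\gamma\lra{\varprojlim}^{(i)}\bar{H}_n(C^*_\gamma;G)$ and $g_i:={\varprojlim}^{(i)}\bar{\xi}_\gamma\colon{\varprojlim}^{(i)}\bar{H}_n(C^*_\gamma;G)\lra{\varprojlim}^{(i)}\Hom(H^n(C^*_\gamma);G)$ with $g_i\circ f_i={\varprojlim}^{(i)}\xi_\gamma$ an isomorphism.

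The homological step is then immediate: setting $r_i:=\bigl({\varprojlim}^{(i)}\xi_\gamma\bigr)^{-1}\circ g_i$, one has $r_i\circ f_i=\mathrm{id}$, so $f_i$ is a split monomorphism and
\[
{\varprojlim}^{(i)}\bar{H}_n(C^*_\gamma;G)\;\simeq\;\bigl({\varprojlim}^{(i)}\bar{Z}_n^\gamma\bigr)\oplus\Ker r_i .
\]
In particular ${\varprojlim}^{(i)}\bar{Z}_n^\gamma$ is a direct summand of ${\varprojlim}^{(i)}\bar{H}_n(C^*_\gamma;G)$, and $r_i$ is the corresponding projection.

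It remains to verify that $r_i$ is natural. Given a morphism $\mathbf{f}\colon\mathbf{C}^*\lra\mathbf{D}^*$ of direct systems of cochain complexes, the induced morphisms of the associated inverse systems commute with $\xi_\gamma$, $\bar{p}_\gamma$ and $\bar{\xi}_\gamma$, by functoriality of the cone construction, of $\bar{H}_n$ (Section~1), and of $\Hom(H^n(-);G)$; moreover the natural isomorphism ${\varprojlim}^{(i)}\xi_\gamma$ of the preceding Corollary has a natural inverse, its naturality resting on the evidently natural short exact sequence $0\to\Ker\xi_\gamma\to\bar{Z}_n^\gamma\to\Hom(H^n(C^*_\gamma);G)\to 0$ together with the vanishing ${\varprojlim}^{(i)}\Ker\xi_\gamma=0$, whose compatibility with morphisms of direct systems is ensured by Lemma~3. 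Hence $r_i$, being a composite of natural maps and a natural inverse, is natural, which completes the proof. The single point that demands care is precisely this naturality bookkeeping; the homological fact that a map $f$ with $g\circ f$ invertible is a split monomorphism is immediate.
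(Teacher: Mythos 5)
Your proposal is correct and follows essentially the same route as the paper: apply ${\varprojlim}^{(i)}$ to the commutative triangle \eqref{eq53}, note that ${\varprojlim}^{(i)}\xi_\gamma = {\varprojlim}^{(i)}\bar{\xi}_\gamma\circ{\varprojlim}^{(i)}\bar{p}_\gamma$ is an isomorphism for $i\ge 1$ by the preceding corollary (whose proof rests on ${\varprojlim}^{(i)}\Ker\xi_\gamma=0$), and conclude that ${\varprojlim}^{(i)}\bar{p}_\gamma$ is a split monomorphism with natural retraction $\bigl({\varprojlim}^{(i)}\xi_\gamma\bigr)^{-1}\circ{\varprojlim}^{(i)}\bar{\xi}_\gamma$. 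Your write-up merely makes explicit the naturality bookkeeping that the paper leaves implicit.
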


Finally, we obtain the following important property {\color{black} of the} $\varprojlim ^{(i)}$ functor.

\begin{theorem}
	For each integer $i\ge 0$, there is a short exact sequence
	\begin{equation}\label{eq54}
	\begin{tikzpicture}
	
	\node (A) {$0$};
	\node (B) [node distance=2.5cm, right of=A] {${\varprojlim} ^ {(i)} Ext(H^{n+1}(C^*_\gamma);G) $};
	\node (C) [node distance=4cm, right of=B] {${\varprojlim} ^ {(i)} \bar{H}_n(C^*_\gamma;G)  $};
	\node (D) [node distance=4cm, right of=C] {${\varprojlim} ^ {(i)} Hom(H^n(C^*_\gamma);G) $};
	\node (E) [node distance=2.5cm, right of=D] {$0,$};
	
	\draw[->] (A) to node [above]{}(B);
	\draw[->] (B) to node [above]{}(C);
	\draw[->] (C) to node [above]{$\xi_\gamma$}(D);
	\draw[->] (D) to node [above]{}(E);
	
	\end{tikzpicture}
	\end{equation}
and this sequence splits naturally for $i\ge 1$.	
\end{theorem}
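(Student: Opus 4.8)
The plan is to derive the statement from the Universal Coefficient Formula of Theorem~1 by a routine derived-functor argument, whose only non-formal ingredients are the results of this section that control $\varprojlim^{(i)}\bar{Z}_n^\gamma$. First I would apply Theorem~1 to each cochain complex $C^*_\gamma$ of the direct system $\mathbf{C}^*=\{C^*_\gamma\}$. Since $\bar{H}_n$ is a functor and the homomorphisms $\bar{\chi}$, $\bar{\xi}$ of \eqref{eq3} are given by canonical constructions that are compatible with cochain maps (cf. Lemma~3 and the triangle \eqref{eq53}), the sequences \eqref{eq3} are natural in $\gamma$ and therefore assemble into a short exact sequence of inverse systems of $R$-modules
\begin{equation*}
0 \lra \{\Ext(H^{n+1}(C^*_\gamma);G)\} \lra \{\bar{H}_n(C^*_\gamma;G)\} \lra \{\Hom(H^n(C^*_\gamma);G)\} \lra 0 .
\end{equation*}

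Applying the right derived functors of $\varprojlim$ on the abelian category of inverse systems yields a long exact sequence in which, for each $i\ge 0$, the terms $\varprojlim^{(i)}\Ext(H^{n+1}(C^*_\gamma);G)$, $\varprojlim^{(i)}\bar{H}_n(C^*_\gamma;G)$ and $\varprojlim^{(i)}\Hom(H^n(C^*_\gamma);G)$ occur in this order, linked to the next block by a connecting homomorphism $\delta^{(i)}\colon\varprojlim^{(i)}\Hom(H^n(C^*_\gamma);G)\to\varprojlim^{(i+1)}\Ext(H^{n+1}(C^*_\gamma);G)$. The desired short exact sequence at level $i$ amounts to $\delta^{(i)}=0$ together with the injectivity of $\varprojlim^{(i)}\bar{\chi}_\gamma$, and the latter follows from $\delta^{(i-1)}=0$ (vacuously for $i=0$, where $\varprojlim$ is already left exact). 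Hence it suffices to show that $\delta^{(i)}=0$ for every $i\ge 0$, equivalently that the induced map $\varprojlim^{(i)}\bar{\xi}_\gamma\colon\varprojlim^{(i)}\bar{H}_n(C^*_\gamma;G)\to\varprojlim^{(i)}\Hom(H^n(C^*_\gamma);G)$ is an epimorphism for all $i\ge 0$.

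For this I would use the triangle \eqref{eq53}, which is natural in $\gamma$: applying $\varprojlim^{(i)}$ gives the identity $\varprojlim^{(i)}\bar{\xi}_\gamma\circ\varprojlim^{(i)}\bar{p}_\gamma=\varprojlim^{(i)}\xi_\gamma$. From the short exact sequence $0\to\Ker\xi_\gamma\to\bar{Z}_n^\gamma\to\Hom(H^n(C^*_\gamma);G)\to 0$ and Lemma~4 (which gives $\varprojlim^{(j)}\{\Ker\xi_\gamma\}=0$ for $j\ge 1$), the map $\varprojlim^{(i)}\xi_\gamma$ is an epimorphism for every $i\ge 0$, and is in fact an isomorphism for $i\ge 1$ by Corollary~3. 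Since $\varprojlim^{(i)}\xi_\gamma$ factors through $\varprojlim^{(i)}\bar{\xi}_\gamma$, the map $\varprojlim^{(i)}\bar{\xi}_\gamma$ is an epimorphism; consequently $\delta^{(i)}=0$ for all $i\ge 0$ and the long exact sequence breaks into the asserted short exact sequences.

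It remains to produce the natural splitting for $i\ge 1$. For such $i$ the map $\varprojlim^{(i)}\xi_\gamma$ is an isomorphism by Corollary~3, so $s:=\varprojlim^{(i)}\bar{p}_\gamma\circ(\varprojlim^{(i)}\xi_\gamma)^{-1}$ is a homomorphism $\varprojlim^{(i)}\Hom(H^n(C^*_\gamma);G)\to\varprojlim^{(i)}\bar{H}_n(C^*_\gamma;G)$ which, by the identity of the previous paragraph, satisfies $\varprojlim^{(i)}\bar{\xi}_\gamma\circ s=\mathrm{id}$; thus $s$ splits the sequence, and it is natural since $\bar{p}_\gamma$, $\xi_\gamma$ and \eqref{eq53} are natural and $\varprojlim^{(i)}$ is a functor (equivalently, one may invoke Corollary~4, which exhibits $\varprojlim^{(i)}\bar{Z}_n^\gamma$ as a natural direct summand of $\varprojlim^{(i)}\bar{H}_n(C^*_\gamma;G)$). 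I expect no genuine obstacle: once Lemma~4 and Corollary~3 are in hand everything is formal. The one point requiring a little care is the naturality of the termwise Universal Coefficient Formula asserted in the first paragraph, which must be checked directly from the explicit constructions in the proof of Theorem~1.
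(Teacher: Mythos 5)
Your proposal is correct and follows essentially the same route as the paper: both assemble the termwise Universal Coefficient Formula into a short exact sequence of inverse systems, kill the connecting homomorphisms using Lemma~4 ($\varprojlim^{(i)}\Ker\xi_\gamma=0$ for $i\ge 1$) together with the factorization $\xi_\gamma=\bar{\xi}_\gamma\circ\bar{p}_\gamma$ from the triangle \eqref{eq53}, and obtain the natural splitting for $i\ge 1$ from the resulting isomorphism $\varprojlim^{(i)}\bar{Z}_n^\gamma\simeq\varprojlim^{(i)}\Hom(H^n(C^*_\gamma);G)$ (Corollaries~3 and~4). Your presentation is slightly more streamlined (arguing directly that the connecting maps vanish rather than writing out the full morphism of long exact sequences as in diagram \eqref{eq56}), but the ingredients and logic are the same, and you correctly flag the naturality of the termwise UCF as the one point needing verification.
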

\begin{proof} Using the commutative diagram \eqref{eq53}, for each $\gamma$ we have a commutative diagram with exact rows:
	\begin{equation}\label{eq55}	
	\begin{tikzpicture}
	
	\node (A) {$0$};
	\node (B) [node distance=2cm, right of=A] {$Ker \xi _\gamma$};
	\node (C) [node distance=2.5cm, right of=B] {$\bar{Z}_n^\gamma$};
	\node (D) [node distance=2.5cm, right of=C] {$Hom(H^n(C^*_\gamma) ;G)$};
	\node (E) [node distance=2cm, right of=D] {$0$};
	
	\draw[->] (A) to node [left]{}(B);
	\draw[->] (B) to node [above]{}(C);
	\draw[->] (C) to node [above]{$\xi_\gamma$}(D);
	\draw[->] (D) to node [left]{}(E);
	
	\node (A1) [node distance=2cm, below of=A] {$0$};
	\node (B1) [node distance=2cm, below of=B] {$Ext(H^{n+1}(C^*_\gamma) ;G)$};
	\node (C1) [node distance=2cm, below of=C] {$\bar{H}_n(C^*_\gamma;G)$};
	\node (D1) [node distance=2cm, below of=D] {$Hom(H^n(C^*_\gamma) ;G)$};
	\node (E1) [node distance=2cm, below of=E] {$0$.};
	
	\draw[->] (A1) to node [left]{}(B1);
	\draw[->] (B1) to node [above]{}(C1);
	\draw[->] (C1) to node [above]{$\bar{\xi}_\gamma$}(D1);
	\draw[->] (D1) to node [left]{}(E1);
	
	\draw[->] (B) to node [left]{}(B1);
	\draw[->] (C) to node [left]{$\bar{p}_\gamma $}(C1);
	\draw[->] (D) to node [left]{$1$}(D1);
	
	\end{tikzpicture}
	\end{equation}
	This induces the following commutative diagram with exact rows
	\begin{equation}\label{eq56}	
	\begin{tikzpicture}
	
	\node (A) {$\dots$};
	\node (B) [node distance=2.4cm, right of=A] {${\varprojlim} ^ {(i)} Ker \xi _\gamma$};
	\node (C) [node distance=3.5cm, right of=B] {${\varprojlim} ^ {(i)} \bar{Z}_n^\gamma$};
	\node (D) [node distance=3.5cm, right of=C] {${\varprojlim} ^ {(i)} Hom(H^n(C^*_\gamma) ;G)$};
	\node (E) [node distance=4cm, right of=D] {${\varprojlim} ^ {(i+1)} Ker \xi _\gamma$};
	\node (F) [node distance=3cm, right of=E] {$\dots$};
	
	\draw[->] (A) to node [left]{}(B);
	\draw[->] (B) to node [above]{}(C);
	\draw[->] (C) to node [above]{${\varprojlim} ^ {(i)} \xi_\gamma$}(D);
	\draw[->] (D) to node [left]{}(E);
	\draw[->] (E) to node [left]{}(F);
	
	\node (A1) [node distance=2cm, below of=A] {$\dots$};
	\node (B1) [node distance=2cm, below of=B] {$ {\varprojlim} ^ {(i)} Ext(H^{n+1}(C^*_\gamma) ;G)$};
	\node (C1) [node distance=2cm, below of=C] {${\varprojlim} ^ {(i)} \bar{H}_n(C^*_\gamma;G)$};
	\node (D1) [node distance=2cm, below of=D] {${\varprojlim} ^ {(i)} Hom(H^n(C^*_\gamma) ;G)$};
	\node (E1) [node distance=2cm, below of=E] {${\varprojlim} ^ {(i+1)} Ext(H^{n+1}(C^*_\gamma) ;G)$};
	\node (F1) [node distance=2cm, below of=F] {$\dots$~.};
	
	\draw[->] (A1) to node [left]{}(B1);
	\draw[->] (B1) to node [above]{}(C1);
	\draw[->] (C1) to node [above]{${\varprojlim} ^ {(i)} \bar{\xi}_\gamma$}(D1);
	\draw[->] (D1) to node [left]{}(E1);
	\draw[->] (E1) to node [left]{}(F1);
	
	\draw[->] (B) to node [left]{}(B1);
	\draw[->] (C) to node [left]{${\varprojlim} ^ {(i)} \bar{p}_\gamma $}(C1);
	\draw[->] (D) to node [left]{$1$}(D1);
	\draw[->] (E) to node [left]{}(E1);
	
	\end{tikzpicture}
	\end{equation}
	By Lemma 4, ${\varprojlim} ^ {(i)} Ker \xi_\gamma = 0,$ for $i\ge 1,$  and so the beginning of the  diagram \eqref{eq56} {\color{black} is of} the following form: 
	
	\begin{equation}\label{eq57}	
	\begin{tikzpicture}
	
	\node (A) {$0$};
	\node (B) [node distance=2.5cm, right of=A] {${\varprojlim} Ker \xi _\gamma$};
	\node (C) [node distance=3.5cm, right of=B] {${\varprojlim} \bar{Z}_n^\gamma$};
	\node (D) [node distance=3.5cm, right of=C] {${\varprojlim}  Hom(H^n(C^*_\gamma) ;G)$};
	\node (E) [node distance=2.5cm, right of=D] {$0$};
	
	\draw[->] (A) to node [left]{}(B);
	\draw[->] (B) to node [above]{}(C);
	\draw[->] (C) to node [above]{${\varprojlim}  \xi_\gamma$}(D);
	\draw[->] (D) to node [left]{}(E);
	
	\node (A1) [node distance=2cm, below of=A] {$0$};
	\node (B1) [node distance=2cm, below of=B] {$ {\varprojlim}  Ext(H^{n+1}(C^*_\gamma) ;G)$};
	\node (C1) [node distance=2cm, below of=C] {${\varprojlim}  \bar{H}_n(C^*_\gamma;G)$};
	\node (D1) [node distance=2cm, below of=D] {${\varprojlim}  Hom(H^n(C^*_\gamma) ;G)$};
	\node (E1) [node distance=2cm, below of=E] {$\dots$ ~.};
	
	\draw[->] (A1) to node [left]{}(B1);
	\draw[->] (B1) to node [above]{}(C1);
	\draw[->] (C1) to node [above]{${\varprojlim}  \bar{\xi}_\gamma$}(D1);
	\draw[->] (D1) to node [left]{}(E1);
	
	\draw[->] (B) to node [left]{}(B1);
	\draw[->] (C) to node [left]{${\varprojlim} ^ {(i)} \bar{p}_\gamma $}(C1);
	\draw[->] (D) to node [left]{$1$}(D1);
	
	\end{tikzpicture}
	\end{equation}
Therefore, the following sequence is exact 
\begin{equation}\label{eq58}
\begin{tikzpicture}

\node (A) {$0$};
\node (B) [node distance=2.5cm, right of=A] {${\varprojlim} Ext(H^{n+1}(C^*_\gamma);G) $};
\node (C) [node distance=4cm, right of=B] {${\varprojlim}  \bar{H}_n(C^*_\gamma;G)  $};
\node (D) [node distance=4cm, right of=C] {${\varprojlim}  Hom(H^n(C^*_\gamma);G) $};
\node (E) [node distance=2.5cm, right of=D] {$0$};

\draw[->] (A) to node [above]{}(B);
\draw[->] (B) to node [above]{}(C);
\draw[->] (C) to node [above]{$\xi_\gamma$}(D);
\draw[->] (D) to node [above]{}(E);

\end{tikzpicture}
\end{equation}
and {\color{black} the map} ${\varprojlim} ^ {(1)}\{Ext(H^{n+1}(C^*_\gamma);G) \} \lra {\varprojlim} ^ {(1)}\{ \bar{H}_n(C^*_\gamma;G)  \}$ is a monomorphism. Therefore, we obtain the result for $i=0$. On the other hand, for $i \ge1,$ the result follows from the commutativity of the diagram \eqref{eq56} and {\color{black}Corollaries} 2 and 3. 
\end{proof}

{\color{black} Here we formulate and give the proof of the dual version of the main theorem of \cite{12}.}  

\begin{theorem}
Let ${\mathbf C }^*=\{C^*_\gamma\}$ be a direct system of cochain complexes. Then, there is a natural exact sequence
\begin{equation}\label{eq59}
\begin{tikzpicture}

\node (A) {$\dots$};
\node (B) [node distance=2cm, right of=A] {$ {\varprojlim} ^ {(3)}   \bar{H}^\gamma_{n+2} $};
\node (C) [node distance=2.5cm, right of=B] {${\varprojlim} ^ {(1)}  \bar{H}^\gamma_{n+1} $};
\node (D) [node distance=2.5cm, right of=C] {$ \bar{H}_n \left( {\varinjlim}   C^*_\gamma ;G \right) $};
\node (E) [node distance=2.5cm, right of=D] {${\varprojlim}  \bar{H}^\gamma_{n} $};
\node (F) [node distance=2cm, right of=E] {${\varprojlim} ^ {(2)}  \bar{H}^\gamma_{n} $};
\node (H) [node distance=2cm, right of=F] {$\dots$};

\draw[->] (A) to node [above]{}(B);
\draw[->] (B) to node [above]{}(C);
\draw[->] (C) to node [above]{}(D);
\draw[->] (D) to node [above]{}(E);
\draw[->] (E) to node [above]{}(F);
\draw[->] (F) to node [above]{}(H);

\end{tikzpicture}
\end{equation}
where $\bar{H}_*^\gamma=\bar{H}_* (C^*_\gamma ;G).$
\end{theorem}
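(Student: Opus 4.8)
The plan is to reduce the statement to computing the homology of the inverse limit of the inverse system of chain complexes $\{C_*^\gamma(\beta_\#)\}$ and then to read off the answer from the material of Sections~2 and~3. By the isomorphism \eqref{eq39},
\[
\bar{H}_n(\varinjlim C^*_\gamma;G)=H_n\big(\Hom(\varinjlim C^*_\gamma;\beta_\#)\big)=H_n\big(\varprojlim C_*^\gamma(\beta_\#)\big),
\]
so one must compute $H_n$ of $\varprojlim$ of an inverse system of chain complexes whose chain groups $C_n^\gamma(\beta_\#)=\Hom(C^n_\gamma;G')\oplus\Hom(C^{n+1}_\gamma;G'')$ are $\varprojlim$-acyclic, i.e.\ ${\varprojlim}^{(i)}C_n^\gamma(\beta_\#)=0$ for $i\ge1$; this holds because $G'$ and $G''$ are injective (Lemma~1.3 of \cite{6}; cf.\ \eqref{eq45}). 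Consequently, forming $\varprojlim$ of the complex $\{C_*^\gamma(\beta_\#)\}$ loses no information at the level of derived limits, so that $H_*(\varprojlim C_*^\gamma(\beta_\#))$ is governed by the derived limits ${\varprojlim}^{(p)}$ of the associated cycle, boundary and homology systems.

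First I would split the inverse system of chain complexes into the short exact sequences of inverse systems
\[
0\to\bar{Z}_m^\gamma\to C_m^\gamma(\beta_\#)\to\bar{B}_{m-1}^\gamma\to0,\qquad 0\to\bar{B}_m^\gamma\to\bar{Z}_m^\gamma\to\bar{H}_m^\gamma\to0,
\]
and apply the long exact sequence of the functors ${\varprojlim}^{(p)}$. Using ${\varprojlim}^{(i)}C_m^\gamma(\beta_\#)=0$ $(i\ge1)$ in the first family gives the shift isomorphisms ${\varprojlim}^{(p)}\bar{B}_{m-1}^\gamma\cong{\varprojlim}^{(p+1)}\bar{Z}_m^\gamma$ for $p\ge1$, together with a four-term exact sequence at $p=0$ that identifies the boundary subgroup $B_{n-1}(\varprojlim C_*^\gamma(\beta_\#))$ and produces a ${\varprojlim}^{(1)}$ correction term. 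Feeding this into the second family, and using $Z_n(\varprojlim C_*^\gamma(\beta_\#))=\varprojlim\bar{Z}_n^\gamma$, one obtains a long exact sequence — a Milnor-type sequence with higher derived limits — expressing $\bar{H}_n(\varinjlim C^*_\gamma;G)$ through the derived limits ${\varprojlim}^{(p)}\bar{Z}_*^\gamma$ and ${\varprojlim}^{(p)}\bar{H}_*^\gamma$.

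It then remains to rewrite this sequence purely in terms of $\{\bar{H}_*^\gamma\}$, which is where the results of Section~3 enter. By \eqref{eq50}, ${\varprojlim}^{(i)}\bar{Z}_n^\gamma\cong{\varprojlim}^{(i)}\Hom(H^n(C^*_\gamma);G)$ for $i\ge1$; by \eqref{eq54}, which splits naturally for $i\ge1$ with ${\varprojlim}^{(i)}\bar{Z}_n^\gamma$ a natural direct summand of ${\varprojlim}^{(i)}\bar{H}_n^\gamma$, one has ${\varprojlim}^{(i)}\bar{H}_n^\gamma\cong{\varprojlim}^{(i)}\Ext(H^{n+1}(C^*_\gamma);G)\oplus{\varprojlim}^{(i)}\bar{Z}_n^\gamma$ for $i\ge1$; and applying ${\varprojlim}^{(i)}$ to the four-term exact sequence
\[
0\to\Hom(H^{n+1}_\gamma;G)\to\Hom(H^{n+1}_\gamma;G')\to\Hom(H^{n+1}_\gamma;G'')\to\Ext(H^{n+1}_\gamma;G)\to0
\]
coming from the injective resolution of $G$, together with the $\varprojlim$-acyclicity of $\{\Hom(H^{n+1}_\gamma;G')\}$ and $\{\Hom(H^{n+1}_\gamma;G'')\}$ (again Lemma~1.3 of \cite{6}), gives ${\varprojlim}^{(i)}\Ext(H^{n+1}_\gamma;G)\cong{\varprojlim}^{(i+2)}\Hom(H^{n+1}_\gamma;G)\cong{\varprojlim}^{(i+2)}\bar{Z}_{n+1}^\gamma$ for $i\ge1$. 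Hence ${\varprojlim}^{(p)}\bar{H}_q^\gamma\cong{\varprojlim}^{(p)}\bar{Z}_q^\gamma\oplus{\varprojlim}^{(p+2)}\bar{Z}_{q+1}^\gamma$ for $p\ge1$, and substituting these decompositions into the spliced long exact sequence collapses the $\bar{Z}$-, $\bar{B}$- and $\Ext$-contributions into a single chain of terms ${\varprojlim}^{(p)}\bar{H}_q^\gamma$ in which $p$ and $q$ are linked exactly as in \eqref{eq59}: the even part ${\varprojlim}^{(2(q-n))}\bar{H}_q^\gamma$ $(q\ge n)$ appears to the right of $\bar{H}_n(\varinjlim C^*_\gamma;G)$ and the odd part ${\varprojlim}^{(2(q-n)-1)}\bar{H}_q^\gamma$ $(q\ge n+1)$ to its left. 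Naturality in $\mathbf{C}^*$ follows because \eqref{eq39}, all the derived-limit long exact sequences, and the splittings of \eqref{eq54} are natural, using \eqref{eq42} for the naturality of the $\Ker\xi_\gamma$-part.

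The step I expect to be the main obstacle is the bookkeeping at the low end of the sequence together with the precise identification of the internal maps. At $p=0$ and $p=1$ the shift isomorphisms, \eqref{eq50} and the splitting of \eqref{eq54} either fail or degenerate, and one must verify that the resulting correction terms assemble precisely into $\bar{H}_n(\varinjlim C^*_\gamma;G)$ and that the maps into and out of it are the intended ones — the ordinary ${\varprojlim}^{(1)}$-Milnor portion fused with the higher tail. One must also check that the internal maps of \eqref{eq59} are exactly the composites of the connecting homomorphisms of the derived-limit long exact sequences with the shift isomorphisms, and that exactness holds at each of the infinitely many positions, in particular at the two positions adjacent to $\bar{H}_n(\varinjlim C^*_\gamma;G)$ where the two regimes meet. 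The remaining verifications are routine diagram chases of the kind already carried out in Section~3.
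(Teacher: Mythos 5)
Your plan takes a genuinely different route from the paper's. You propose to compute $\bar H_n(\varinjlim C^*_\gamma;G)$ as the homology of the inverse limit chain complex $\varprojlim C_*^\gamma(\beta_\#)$ (via \eqref{eq39}) using the cycle/boundary filtration and the $\varprojlim$-acyclicity of the chain groups. The paper never forms this inverse limit: it compares the Universal Coefficient Formula for $\varinjlim C^*_\gamma$ with the inverse limit of the formulas for the $C^*_\gamma$, resolving $\Ext(\varinjlim H^{n+1}_\gamma;G)$ by Proposition 1.2 of \cite{6} into ${\varprojlim}^{(1)}\Hom(H^{n+1}_\gamma;G)$, $\varprojlim\Ext(H^{n+1}_\gamma;G)$ and ${\varprojlim}^{(2)}\Hom(H^{n+1}_\gamma;G)$, which yields the four-term sequence \eqref{eq65} directly; the long sequence is then assembled by splicing onto \eqref{eq65} the naturally split short exact sequences \eqref{eq54} through the isomorphisms \eqref{eq072}. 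Your final assembly step is essentially the same splice, so the two arguments differ only in how the four-term core is produced.

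There is, however, a genuine gap in the middle of your argument: the claim that the two families of short exact sequences of inverse systems, together with ${\varprojlim}^{(i)}C_m^\gamma(\beta_\#)=0$ for $i\ge1$, already yield ``a long exact sequence --- a Milnor-type sequence with higher derived limits.'' They do not. What they yield is, for each fixed degree $m$, an exact sequence $\cdots\to{\varprojlim}^{(p+1)}\bar Z_{m+1}^\gamma\to{\varprojlim}^{(p)}\bar Z_m^\gamma\to{\varprojlim}^{(p)}\bar H_m^\gamma\to{\varprojlim}^{(p+2)}\bar Z_{m+1}^\gamma\to\cdots$ (after substituting the shift isomorphisms), i.e.\ precisely the interlocking data of the spectral sequence ${\varprojlim}^{(p)}\bar H_q^\gamma\Rightarrow H_{q-p}$, which for a general inverse system of chain complexes with acyclic terms does not degenerate to a single long exact sequence. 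The staircase of these sequences collapses to the sequence of the theorem only because the maps ${\varprojlim}^{(p+1)}\bar Z_{m+1}^\gamma\to{\varprojlim}^{(p)}\bar Z_m^\gamma$ vanish for $p\ge1$, and that vanishing is equivalent to ${\varprojlim}^{(p)}\bar Z_m^\gamma\to{\varprojlim}^{(p)}\bar H_m^\gamma$ being a (split) monomorphism --- exactly the content of the corollary preceding \eqref{eq54} together with \eqref{eq50}. So the splitting results of Section~3 are not merely a device for rewriting an already-existing sequence in terms of $\{\bar H_*^\gamma\}$ at the end; they are the reason the long exact sequence exists at all, and they must be invoked before you may speak of the Milnor-type sequence. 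With that reordering (and the $p=0,1$ boundary checks you already flag, which reproduce \eqref{eq65}), your plan does go through on the same inputs --- Lemma 1.3 of \cite{6}, \eqref{eq50}, \eqref{eq54} and \eqref{eq072} --- as the paper's proof.
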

\begin{proof} By Proposition 1.2 of \cite{6}, for the inverse system $\{H_\gamma^{n+1}\},$ we have an exact sequence
	
\begin{equation}\label{eq62}
\begin{tikzpicture}

\node (A) {$0$};
\node (B) [node distance=2.5cm, right of=A] {$ {\varprojlim} ^ {(1)}   Hom(H_\gamma^{n+1} ; G) $};
\node (C) [node distance=3.5cm, right of=B] {$Ext(\varinjlim H_\gamma^{n+1}; G ) $};
\node (E) [node distance=6.5cm, right of=B] {$\varprojlim Ext(H_\gamma^{n+1} ;G)$};
\node (F) [node distance=3.5cm, right of=E] {${\varprojlim} ^ {(2)}  Hom(H_\gamma^{n+1} ; G) $};
\node (H) [node distance=2.5cm, right of=F] {$0$};

\draw[->] (A) to node [above]{}(B);
\draw[->] (B) to node [above]{}(C);
\draw[->] (C) to node [above]{}(E);
\draw[->] (E) to node [above]{}(F);
\draw[->] (F) to node [above]{}(H);

\end{tikzpicture}
\end{equation}
and 
\begin{equation}\label{eq072}
{\varprojlim} ^ {(i)} Ext(H^{n+1}_\gamma ;G) \simeq {\varprojlim} ^ {(i+2)}  Hom(H^{n+1}_\gamma ; G), \text{ for } i\ge 1.
\end{equation}
Since cohomology commutes with direct limits, we have $H^*(\varinjlim C^*_\gamma;G) \simeq \varinjlim H^*(C^*_\gamma;G)$. Therefore, if $C^* \simeq \varinjlim C^*_\gamma $, then $H^{n+1}(C^*) \simeq \varinjlim H^{n+1}_\gamma,$ where $ H^{n+1}_\gamma= H^{n+1}(C^*_\gamma ;G).$  So, we obtain an exact sequence 
\begin{equation}\label{eq63}
\begin{tikzpicture}

\node (A) {$0$};
\node (B) [node distance=2.5cm, right of=A] {$ {\varprojlim} ^ {(1)}   Hom(H_\gamma^{n+1} ; G) $};
\node (C) [node distance=3.5cm, right of=B] {$Ext( H^{n+1}(C^*); G ) $};
\node (E) [node distance=6.5cm, right of=B] {$\varprojlim Ext(H_\gamma^{n+1} ;G)$};
\node (F) [node distance=3.5cm, right of=E] {${\varprojlim} ^ {(2)}  Hom(H_\gamma^{n+1} ; G) $};
\node (H) [node distance=2.5cm, right of=F] {$0$.};

\draw[->] (A) to node [above]{}(B);
\draw[->] (B) to node [above]{}(C);
\draw[->] (C) to node [above]{}(E);
\draw[->] (E) to node [above]{}(F);
\draw[->] (F) to node [above]{}(H);

\end{tikzpicture}
\end{equation}
Note that, if $i_\gamma :C^*_\gamma \lra C^*$ is a natural map, then it induces $\pi_\gamma ; \bar{H}_*(C;G) \lra \bar{H}_n(C^*_\gamma ;G)$ map. On the other hand, by Theorem 1, the following diagram is commutative:
\begin{equation}\label{eq063}	
\begin{tikzpicture}

\node (A) {$0$};
\node (B) [node distance=2cm, right of=A] {$Ext(H^{n+1}(C^*);G)$};
\node (C) [node distance=3.5cm, right of=B] {$\bar{H}_n(C^*;G)$};
\node (D) [node distance=3.5cm, right of=C] {$Hom(H^n(C^*) ;G)$};
\node (E) [node distance=2.2cm, right of=D] {$0$};

\draw[->] (A) to node [left]{}(B);
\draw[->] (B) to node [above]{}(C);
\draw[->] (C) to node [above]{}(D);
\draw[->] (D) to node [left]{}(E);

\node (A1) [node distance=2cm, below of=A] {$0$};
\node (B1) [node distance=2cm, below of=B] {$Ext(H^{n+1}(C^*_\gamma);G)$};
\node (C1) [node distance=2cm, below of=C] {$  \bar{H}_n(C^*_\gamma;G)$};
\node (D1) [node distance=2cm, below of=D] {$  Hom(H^n(C^*_\gamma) ;G)$};
\node (E1) [node distance=2cm, below of=E] {$0$.};

\draw[->] (A1) to node [left]{}(B1);
\draw[->] (B1) to node [above]{}(C1);
\draw[->] (C1) to node [above]{}(D1);
\draw[->] (D1) to node [left]{}(E1);

\draw[->] (B) to node [left]{$\tilde{\pi}_\gamma$}(B1);
\draw[->] (C) to node [left]{$\pi_\gamma$}(C1);
\draw[->] (D) to node [left]{$\bar{\pi}_\gamma$}(D1);

\end{tikzpicture}
\end{equation}
The diagram \eqref{eq063} generates the following diagram: 
 
\begin{equation}\label{eq163}	
\begin{tikzpicture}

\node (A) {$0$};
\node (B) [node distance=2cm, right of=A] {$Ext(H^{n+1}(C^*);G)$};
\node (C) [node distance=3.5cm, right of=B] {$\bar{H}_n(C^*;G)$};
\node (D) [node distance=3.5cm, right of=C] {$Hom(H^n(C^*) ;G)$};
\node (E) [node distance=2.2cm, right of=D] {$0$};

\draw[->] (A) to node [left]{}(B);
\draw[->] (B) to node [above]{}(C);
\draw[->] (C) to node [above]{}(D);
\draw[->] (D) to node [left]{}(E);

\node (A1) [node distance=2cm, below of=A] {$0$};
\node (B1) [node distance=2cm, below of=B] {$\varprojlim Ext(H^{n+1}_\gamma;G)$};
\node (C1) [node distance=2cm, below of=C] {$\varprojlim  \bar{H}_n^\gamma$};
\node (D1) [node distance=2cm, below of=D] {$\varprojlim  Hom(H^n_\gamma ;G)$};
\node (E1) [node distance=2cm, below of=E] {$0$.};

\draw[->] (A1) to node [left]{}(B1);
\draw[->] (B1) to node [above]{}(C1);
\draw[->] (C1) to node [above]{}(D1);
\draw[->] (D1) to node [left]{}(E1);

\draw[->] (B) to node [left]{$\tilde{\pi}$}(B1);
\draw[->] (C) to node [left]{$\pi$}(C1);
\draw[->] (D) to node [left]{$\simeq$}(D1);

\end{tikzpicture}
\end{equation}
 Therefore, we have $Ker \tilde{\pi} \simeq Ker \pi$ and $Coker \tilde{\pi} \simeq Coker \pi$ and so, the following diagram is commutative: 
\begin{equation}\label{eq64}	
\begin{tikzpicture}

\node (A) {$0$};
\node (B) [node distance=2cm, right of=A] {$Ext(H^{n+1}(C);G)$};
\node (C) [node distance=3.5cm, right of=B] {$\bar{H}_n(C;G)$};
\node (D) [node distance=3.5cm, right of=C] {$Hom(H^n(C);G)$};
\node (E) [node distance=2cm, right of=D] {$0$};

\draw[->] (A) to node [left]{}(B);
\draw[->] (B) to node [above]{}(C);
\draw[->] (C) to node [above]{$$}(D);
\draw[->] (D) to node [left]{}(E);

\node (B2) [node distance=1.5cm, above of=B] {$Ker \tilde{\pi}$};
\node (C2) [node distance=1.5cm, above of=C] {$Ker \pi$};
\node (B3) [node distance=1.5cm, above of=B2] {$0$};
\node (C3) [node distance=1.5cm, above of=C2] {$0$};

\draw[->] (B3) to node [left]{}(B2);
\draw[->] (C3) to node [left]{}(C2);
\draw[->] (B2) to node [left]{}(B);
\draw[->] (C2) to node [left]{}(C);
\draw[->] (B2) to node [above]{$\simeq$}(C2);

\node (A1) [node distance=2cm, below of=A] {$0$};
\node (B1) [node distance=2cm, below of=B] {$ \varprojlim Ext(H^{n+1}_\gamma ;G)$};
\node (C1) [node distance=2cm, below of=C] {$\varprojlim \bar{H}_n^\gamma$};
\node (D1) [node distance=2cm, below of=D] {$\varprojlim Hom(H^n_\gamma ;G)$};
\node (E1) [node distance=2cm, below of=E] {$0$~.};

\draw[->] (A1) to node [left]{}(B1);
\draw[->] (B1) to node [above]{}(C1);
\draw[->] (C1) to node [above]{$$}(D1);
\draw[->] (D1) to node [left]{}(E1);

\draw[->] (B) to node [left]{$\tilde{\pi}$}(B1);
\draw[->] (C) to node [left]{$\pi $}(C1);
\draw[->] (D) to node [left]{$\simeq $}(D1);

\node (B4) [node distance=1.5cm, below of=B1] {$Coker \tilde{\pi}$};
\node (C4) [node distance=1.5cm, below of=C1] {$Coker \pi$};
\node (B5) [node distance=1.5cm, below of=B4] {$0$};
\node (C5) [node distance=1.5cm, below of=C4] {$0$};

\draw[->] (B1) to node [left]{}(B4);
\draw[->] (C1) to node [left]{}(C4);
\draw[->] (B4) to node [left]{}(B5);
\draw[->] (C4) to node [left]{}(C5);
\draw[->] (B4) to node [above]{$\simeq$}(C4);
\end{tikzpicture}
\end{equation}
Using the exact sequence \eqref{eq63} and diagram \eqref{eq64}, we obtain a four-term exact sequence:

\begin{equation}\label{eq65}
\begin{tikzpicture}

\node (A) {$0$};
\node (B) [node distance=2.5cm, right of=A] {$ {\varprojlim} ^ {(1)}   Hom(H_\gamma^{n+1} ; G) $};
\node (C) [node distance=3cm, right of=B] {$\bar{H}_n(C;G)$};
\node (E) [node distance=5cm, right of=B] {$\varprojlim \bar{H}_n^\gamma$};
\node (F) [node distance=3cm, right of=E] {${\varprojlim} ^ {(2)}  Hom(H_\gamma^{n+1} ; G) $};
\node (H) [node distance=2.5cm, right of=F] {$0$.};

\draw[->] (A) to node [above]{}(B);
\draw[->] (B) to node [above]{}(C);
\draw[->] (C) to node [above]{}(E);
\draw[->] (E) to node [above]{}(F);
\draw[->] (F) to node [above]{}(H);
\end{tikzpicture}
\end{equation}
Using the exact sequence \eqref{eq65}, Theorem 3 and isomorphism \eqref{eq072}, we obtain the following diagram, which contains the long exact sequence of the theorem:
\begin{equation}\label{eq66}
\begin{tikzpicture}

\node (A) {$0$};
\node (B) [node distance=2.5cm, right of=A] {$ {\varprojlim} ^ {(1)}   Hom(H_\gamma^{n+1} ; G) $};
\node (C) [node distance=3cm, right of=B] {$\bar{H}_n(C;G)$};
\node (E) [node distance=5cm, right of=B] {$\varprojlim \bar{H}_n^\gamma$};
\node (F) [node distance=3cm, right of=E] {${\varprojlim} ^ {(2)}  Hom(H_\gamma^{n+1} ; G) $};
\node (H) [node distance=2.5cm, right of=F] {$0$.};

\draw[->] (A) to node [above]{}(B);
\draw[->] (B) to node [above]{}(C);
\draw[->] (C) to node [above]{}(E);
\draw[->] (E) to node [above]{}(F);
\draw[->] (F) to node [above]{}(H);

\node (A1) [node distance=1.5cm, below of=A] {$0$};
\node (B1) [node distance=1.5cm, below of=B] {${\varprojlim} ^ {(2)} Ext(H_\gamma^{n+2} ; G)$};
\node (E1) [node distance=1.5cm, below of=E] {${\varprojlim} ^ {(2)} \bar{H}_{n+1}^\gamma$};
\node (F1) [node distance=1.5cm, below of=F] {${\varprojlim} ^ {(2)} Hom(H_\gamma^{n+1} ; G)$};
\node (H1) [node distance=1.5cm, below of=H] {$0$};

\draw[->] (H1) to node [above]{}(F1);
\draw[->] (F1) to node [above]{}(E1);
\draw[->] (E1) to node [above]{}(B1);
\draw[->] (B1) to node [above]{}(A1);

\node (A2) [node distance=1.5cm, below of=A1] {$0$};
\node (B2) [node distance=1.5cm, below of=B1] {${\varprojlim} ^ {(4)} Hom(H_\gamma^{n+2} ; G)$};
\node (E2) [node distance=1.5cm, below of=E1] {${\varprojlim} ^ {(4)} \bar{H}_{n+2}^\gamma$};
\node (F2) [node distance=1.5cm, below of=F1] {${\varprojlim} ^ {(4)} Ext(H_\gamma^{n+3} ; G)$};
\node (H2) [node distance=1.5cm, below of=H1] {$0$};

\node (EE2) [node distance=1.5cm, below of=E2] {$\vdots$};

\draw[->] (A2) to node [above]{}(B2);
\draw[->] (B2) to node [above]{}(E2);
\draw[->] (E2) to node [above]{}(F2);
\draw[->] (F2) to node [above]{}(H2);

\node (A3) [node distance=1.5cm, above of=A] {$0$};
\node (B3) [node distance=1.5cm, above of=B] {${\varprojlim} ^ {(1)} Hom(H_\gamma^{n+1} ; G)$};
\node (E3) [node distance=1.5cm, above of=E] {${\varprojlim} ^ {(1)} \bar{H}_{n+1}^\gamma$};
\node (F3) [node distance=1.5cm, above of=F] {${\varprojlim} ^ {(1)} Ext(H_\gamma^{n+2} ; G)$};
\node (H3) [node distance=1.5cm, above of=H] {$0$};

\draw[->] (H3) to node [above]{}(F3);
\draw[->] (F3) to node [above]{}(E3);
\draw[->] (E3) to node [above]{}(B3);
\draw[->] (B3) to node [above]{}(A3);

\node (A4) [node distance=1.5cm, above of=A3] {$0$};
\node (B4) [node distance=1.5cm, above of=B3] {${\varprojlim} ^ {(3)} Ext(H_\gamma^{n+3} ; G)$};
\node (E4) [node distance=1.5cm, above of=E3] {${\varprojlim} ^ {(3)} \bar{H}_{n+2}^\gamma$};
\node (F4) [node distance=1.5cm, above of=F3] {${\varprojlim} ^ {(3)} Hom(H_\gamma^{n+2} ; G)$};
\node (H4) [node distance=1.5cm, above of=H3] {$0$};

\node (EE4) [node distance=1.5cm, above of=E4] {$\vdots$};

\draw[->] (A4) to node [above]{}(B4);
\draw[->] (B4) to node [above]{}(E4);
\draw[->] (E4) to node [above]{}(F4);
\draw[->] (F4) to node [above]{}(H4);

\draw[dotted, thick, ->] (EE4) to node [above]{}(E4);
\draw[dotted, thick, ->] (E4) to node [above]{}(E3);
\draw[dotted, thick, ->] (E3) to node [above]{}(C);
\draw[dotted, thick, ->] (E) to node [above]{}(E1);
\draw[dotted, thick, ->] (E1) to node [above]{}(E2);
\draw[dotted, thick, ->] (E2) to node [above]{}(EE2);

\draw[->] (F4) to node [left]{$\simeq $}(F3);
\draw[->] (B3) to node [left]{$\simeq $}(B);
\draw[->] (F) to node [left]{$\simeq $}(F1);
\draw[->] (B1) to node [left]{$\simeq $}(B2);
\end{tikzpicture}
\end{equation}
\end{proof}

\begin{corollary}
	Let ${\mathbf C }^*=\{C^*_\gamma\}$ be a direct system of cochain complexes. Then, for each injective group $G,$ there is an isomorphism
	\begin{equation}\label{eq59}
	\bar{H}_n \left( {\varinjlim}   C^*_\gamma ;G \right)  \simeq {\varprojlim} \bar{H}_* (C^*_\gamma ;G).
	\end{equation}
\end{corollary}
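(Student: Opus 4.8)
The plan is to read the isomorphism off the main theorem, the only extra input being that for an injective $G$ all $\Ext$-groups with second argument $G$ vanish, so that the two obstruction terms in its exact sequence collapse. First I would record that, $G$ being injective over the principal ideal domain $R$, one has $\Ext(A;G)=0$ for every $R$-module $A$; in particular $\Ext(H^{n+1}_\gamma;G)=0$ for every index $\gamma$ and $\Ext(H^{n+1}(C^*);G)=0$, where $C^*\simeq\varinjlim C^*_\gamma$.

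Next I would substitute these vanishings into the exact sequence \eqref{eq63} (Proposition~1.2 of \cite{6}). Its two $\Ext$-terms are zero, so the remaining two terms ${\varprojlim}^{(1)}\Hom(H_\gamma^{n+1};G)$ and ${\varprojlim}^{(2)}\Hom(H_\gamma^{n+1};G)$ vanish as well. (Equivalently one may deduce this from Lemma~1.3 of \cite{6} applied to the direct system $\{H^{n+1}_\gamma\}$ with zero differential; I would keep whichever version reads more cleanly.)

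Finally I would feed this into the four-term exact sequence \eqref{eq65} obtained inside the proof of the main theorem, whose outer terms are precisely ${\varprojlim}^{(1)}\Hom(H_\gamma^{n+1};G)$ and ${\varprojlim}^{(2)}\Hom(H_\gamma^{n+1};G)$ and whose middle map is the natural comparison homomorphism $\bar{H}_n(\varinjlim C^*_\gamma;G)\to\varprojlim\bar{H}_n^\gamma$ induced by the structure maps $i_\gamma:C^*_\gamma\to\varinjlim C^*_\gamma$. By the previous step the outer terms vanish, hence this map is an isomorphism, which is the assertion; naturality of the isomorphism is inherited from the naturality of that comparison map and of all maps in \eqref{eq63} and \eqref{eq65}. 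As an independent check one may instead argue directly: Corollary~1 gives $\bar{H}_n(C^*;G)\simeq\Hom(H^n(C^*);G)$ and $\bar{H}_n^\gamma\simeq\Hom(H^n_\gamma;G)$, $H^n$ commutes with direct limits, and $\Hom(\varinjlim H^n_\gamma;G)\simeq\varprojlim\Hom(H^n_\gamma;G)$, and composing these isomorphisms yields the same result.

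I do not expect a genuine obstacle: the statement is a bookkeeping consequence of the main theorem. The only point deserving a line of care is that the obstruction terms in \eqref{eq65} must be exactly ${\varprojlim}^{(i)}\Hom(H^{n+1}_\gamma;G)$ for $i=1,2$ --- not, say, derived limits of $\bar{H}_n^\gamma$ or of $\Ext(H^{n+2}_\gamma;G)$ --- so that injectivity of $G$ genuinely kills them; this is ensured by the way \eqref{eq65} was assembled from \eqref{eq63} and the Universal Coefficient Formula (Theorem~1), together with the identification ${\varprojlim}^{(i)}\bar{H}_n^\gamma\simeq{\varprojlim}^{(i)}\Hom(H^n_\gamma;G)$ of Theorem~3.
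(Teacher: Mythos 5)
Your argument is correct and follows essentially the route the paper intends: since $G$ is injective, all $\Ext(-;G)$ terms vanish and, by Lemma~1.3 of \cite{6} (or equivalently by reading it off sequence \eqref{eq63}), so do ${\varprojlim}^{(1)}\Hom(H^{n+1}_\gamma;G)$ and ${\varprojlim}^{(2)}\Hom(H^{n+1}_\gamma;G)$, which are exactly the outer terms of the four-term sequence \eqref{eq65}, making the comparison map an isomorphism. Your closing cross-check via Corollary~1 and $\Hom(\varinjlim H^n_\gamma;G)\simeq\varprojlim\Hom(H^n_\gamma;G)$ is also valid and arguably the shortest self-contained proof.
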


\section{Applications {\color{black}in homology theory}} 
{\bf 1. }Let  $C_c^*(X,G)$ be the cochain complex of Massey \cite{Mas}. It is known that for each locally compact Hausdorff space $X$ and each integer $n$ the cochain group $C_c^n(X,\bZ)$ with integer coefficient is a free abelian group ({\color{black}Theorem 4.1} \cite{Mas}). Using the cochain complex  $C_c^*(X,G)$, Massey defined an exact homology $H_*^M$, the so called Massey homology on the category of locally compact spaces and proper maps as a homology of the chain complex $C_*(X,G)=\Hom (C_c^*(X),G)$. Consequently, for the given category, the Universal Coefficient Formula is obtained (see {\color{black} Theorem 4.1, Corollary 4.18} \cite{Mas} and {\color{black} Theorem 4.1} \cite{Mac}):
\begin{equation}\label{eq93}
0 \lra \Ext(H_c^{n+1}(X),G) \lra {H}^M_n(X,G) \lra \Hom(H_c^n(X),G) \lra 0.
\end{equation}
Let  $C^*_c=C^*_c(X;G)$ be the cochain complex of Massey. Consider the chain complex $\bar{C}^M_*(X;G)=Hom(C^*(X);\beta_{\#})$. Let $\bar{H}^M_*(X;G)$ be homology of the chain complex $\bar{C}^M_*(X;G)$. In this case, by Theorem 1 we 
will obtain the Universal Coefficient Formula
\begin{equation}\label{eq94}
0 \lra \Ext(H_c^{n+1}(X),G) \lra \bar{H}^M_n(X,G) \lra \Hom(H_c^n(X),G) \lra 0.
\end{equation}
Note that by Theorem 2, for the category of locally compact spaces the homologies $\bar{H}^M_n(X,G)$ and $H^M_n(X,G)$ are isomorphic. 

Note that for the Massey homology theory $\bar{H}^M_*(-;G)$ our construction gives the following result:

\begin{corollary}
	Let $X$ be a locally compact Hausdorff space, then\\
	a) if  $\{N_\alpha\}$ is the system of closed neighborhoods $N_\alpha$ of closed subspace $A$ of $X$, directed by inclusion, then it induces the following exact sequence:
	$$\cdots \lra \llm^{(2k+1)} \bar{H}^M_{n+k+1}(N_\alpha) \lra \cdots \llm^{(3)} \bar{H}^M_{n+2}(N_\alpha) \lra \llm^{(1)} \bar{H}^M_{n+1}(N_\alpha) \lra$$
	\begin{gather}
	\lra \bar{H}^M_{n}(A,G) \os{i_*}{\lra} \llm \bar{H}^M_{n}(N_\alpha) \lra  \llm^{(2)} \bar{H}^M_{n+1}(N_\alpha) \lra \cdots \lra \llm^{(2k)} \bar{H}^M_{n+k}(N_\alpha) \lra \cdots\, .
	\end{gather}
	b) if  $\{U_\alpha\}$ is the system of open subspaces of $X$, such that $\bar{U}_\alpha$ is compact and $X=\bigcup U_\alpha$ directed by inclusion, then it induces  the following exact sequence:
	$$\cdots \lra \llm^{(2k+1)} \bar{H}^M_{n+k+1}(U_\alpha) \lra \cdots \llm^{(3)} \bar{H}^M_{n+2}(U_\alpha) \lra \llm^{(1)} \bar{H}^M_{n+1}(U_\alpha) \lra$$
	\begin{gather}\label{eq89.1}
	\lra \bar{H}^M_{n}(X,G) \os{i_*}{\lra} \llm \bar{H}^M_{n}(U_\alpha) \lra  \llm^{(2)} \bar{H}^M_{n+1}(U_\alpha) \lra \cdots \lra \llm^{(2k)} \bar{H}^M_{n+k}(U_\alpha) \lra \cdots\, .
	\end{gather}
\end{corollary} 

Note that the formula \eqref{eq89.1} is a generalization of Theorem 4.22 of \cite{Mas}.

{\bf 2.} Let $G$ be an $R$-module over a principal ideal domain $R$ and let $X$ be a topological space. Denote by $\bar{C}^*(X;G) $ the cochain complex of Alexander-Spanier \cite{15} and by $\bar{H}^*(X;G)$ the Alexander-Spanier cohomology. let $A$ be a subspace of a topological space $X$ and $\{U_\alpha \}$ be the family of all neighborhoods of $A$ in $X$ directed downward by inclusion. Hence, $\{\bar{H}^n(U_\alpha;G) \}$ is a direct system. The restriction maps $\bar{H}^n(U_\alpha;G) \lra \bar{H}^n(A;G)$ define a natural homomorphism
\begin{equation}\label{eq90}
i: \varinjlim \bar{H}^n (U_\alpha;G) \lra \bar{H}^n(A;G).
\end{equation}
By Theorem 6.6.2 \cite{15}, if $A$ is a closed subspace of a paracompact Hausdorff space $X$, then \eqref{eq90} is an isomorphism. In this case, $A$ is called a taut subspace relative to the Alexander-Spanier cohomology theory. In the case of homology theory, we have a natural homomorphism 
\begin{equation}\label{eq91}
i:  {H}_n(A;G) \lra \varprojlim {H}_n (U_\alpha;G).
\end{equation}
The question whether the homomorphism \eqref{eq91} is an isomorphism or not  was open.

Let $\bar{C}^*=\bar{C}^*(X;G)$ be the cochain complex of Alexander-Spanier. Consider the chain complex $\bar{C}_*(X;G)=Hom(\bar{C}^*(X);\beta_{\#})$. Let $\bar{H}_*(X;G)$ be the homology of the chain complex $\bar{C}_*(X;G)$. In this case, we will say that the homology $\bar{H}_*(X;G)$ is generated by the Alexander-Spanier cochains $\bar{C}^*(X;G)$. By Theorem 4 we have the long exact sequence, which contains the homomorphisms \eqref{eq91}.

\begin{corollary}
	If $A$ is a closed subspace of a paracompact Hausdorff space $X$ and $\{U_\alpha\}$ is the family of all neighborhoods of $A$ in $X$, then there is a long exact sequence:
	$$\cdots \lra \llm^{(2k+1)} \bar{H}_{n+k+1}(U_\alpha) \lra \cdots \llm^{(3)} \bar{H}_{n+2}(U_\alpha) \lra \llm^{(1)} \bar{H}_{n+1}(U_\alpha) \lra$$
	\begin{gather}
	\lra \bar{H}_{n}(A;G) \os{i_*}{\lra} \llm \bar{H}_{n}(U_\alpha) \lra  \llm^{(2)} \bar{H}_{n+1}(U_\alpha) \lra \cdots \lra \llm^{(2k)} \bar{H}_{n+k}(U_\alpha) \lra \cdots\, .
	\end{gather}
\end{corollary}

{\bf 3.} It is clear that there is a natural inclusion $i^\#:C^*_c(X;G) \to  \bar{C}_*(X;G)$ from the Massey cochain complex to the Alexander-Spanier cochain complex, which induces the corresponding homomorphism $i^*:\bar{H}_*(-;G) \to \bar{H}^M_*(-;G),$  where $\bar{H}_*(-;G)$ and $\bar{H}^M_*(-;G)$  are homologies generated by the Alexander-Spanier and the Massey cochains, respectively. Therefore,  $\bar{H}_*(-;G)$ and  $\bar{H}^M_*(-;G)$  are homologies of the chain complexes $\bar{C}_*(-;G)=Hom(\bar{C}^*(-);\beta_\#)$ and $\bar{C}_*^M(-;G)=Hom(C^*_c(-);\beta_\#).$  On the other hand, on the category of compact Hausdorff spaces, the Alexsander-Spanier and the Massey cohomology are isomorphic and by the Universal Coefficient Formula, we will obtain that for each compact Hausdorff {\color{black} space there} is an isomorphism:
\begin{equation}\label{eq31.0}
i^*:\bar{H}_*(X;G) \os{\simeq}{\lra} \bar{H}^M_*(X;G).
\end{equation}
On the other hand, since on the category of compact metric spaces the Steenrod homology $H^{St}_*$ and the Massey homology are isomorphic, using the isomorphism \eqref{eq31.0}, we will obtain that
\begin{equation}\label{eq31.1}
\bar{H}_*(X;G) \simeq {H}^{St}_*(X;G).
\end{equation}
The same way, on the category of compact Hausdorff spaces, the Milnor homology $H^{Mil}_*$ and the Massey homology are isomorphic and consequently, we have
\begin{equation}\label{eq31.2}
\bar{H}_*(X;G) \simeq {H}^{Mil}_*(X;G).
\end{equation}
If $H_*^{BM}(-;G)$ is the Borel-Moore homology with coefficients in $G$, then by Theorem 3 \cite{Kuz}, we have the {\color{black}isomorphism}
\begin{equation}\label{}
\bar{H}_*(X;G) \simeq {H}^{BM}_*(X;G).
\end{equation}

{\bf 4.} Let $K_C$ be the category of compact pairs $(X,A)$ and continuous maps and $H_*$ be an exact homology theory. Let $\{(X_\alpha , A_\alpha) \}$ be an inverse system of compact pairs $(X_\alpha, A_\alpha)$ and $(X,A)= \varprojlim (X_\alpha, A_\alpha).$ The inverse system $\{(X_\alpha , A_\alpha) \}$ generates an inverse system $\{H_*(X_\alpha , A_\alpha) \}$ and the projections $\pi_\alpha :(X,A) \to (X_\alpha , A_ \alpha)$ induce the homomorphisms $\pi_{\alpha, *} :H_*(X,A) \to H_*(X_\alpha , A_ \alpha),$ which induce the homomorphism
\begin{equation}\label{eq92}
\pi_{*} :H_*(X,A) \to \varprojlim H_*(X_\alpha , A_ \alpha).
\end{equation}
\begin{definition} 
An exact homology theory $H_*$ {\color{black} is said to be continuous} on the category $K_C$, if for each inverse system $\{(X_\alpha , A_\alpha) \}$ of the given category, there is an infinite exact sequence
$$\cdots \lra \llm^{(2k+1)} {H}_{n+k+1}(X_\alpha , A_\alpha) \lra \cdots \llm^{(3)} {H}_{n+2}(X_\alpha , A_\alpha) \lra \llm^{(1)} {H}_{n+1}(X_\alpha , A_\alpha) \lra$$
\begin{gather}
\lra {H}_{n}(X,A;G) \os{\pi_*}{\lra} \llm {H}_{n}(X_\alpha , A_\alpha) \lra  \llm^{(2)} {H}_{n+1}(X_\alpha , A_\alpha) \lra \cdots \lra \llm^{(2k)} {H}_{n+k}(X_\alpha , A_\alpha) \lra \cdots\, .
\end{gather}
\end{definition}

\begin{definition}
	A direct system $\mathbf{C}^*=\{C^*_\alpha \}$ of the cochain complexes $C^*_\alpha$ is said to be associated with a cochain complex $C^*,$ if there is a homomorphism $\mathbf{C}^* \to C^*$ such that for each $n \in \mathbb{Z}$ the induced homomorphism
	\begin{equation}\label{eq93}
	 \varinjlim H^*(C^*_\alpha) \to H^*(C^*)
	\end{equation}
	is an isomorphism.
\end{definition}

\begin{lemma}
	If a direct system $\mathbf{C}^*=\{C^*_\alpha \}$ of the cochain complexes $C^*_\alpha$ is associated with a cochain complex $C^*$, then there is an infinite exact sequence 
	$$\cdots \lra \llm^{(2k+1)} \bar{H}_{n+k+1}(C^*_\alpha) \lra \cdots \llm^{(3)} \bar{H}_{n+2}(C^*_\alpha) \lra \llm^{(1)} \bar{H}_{n+1}(C^*_\alpha) \lra$$
	\begin{gather} \label{lem1}
	\lra \bar{H}_{n}(C^*;G) \os{\pi_*}{\lra} \llm \bar{H}_{n}(C^*_\alpha) \lra  \llm^{(2)} \bar{H}_{n+1}(C^*_\alpha) \lra \cdots \lra \llm^{(2k)} \bar{H}_{n+k}(C^*_\alpha) \lra \cdots\, .
	\end{gather}
	where $\bar{H}_{*}(C^*)=H_*(Hom(C^*;\beta_\#)$ and $\bar{H}_{*}(C^*_\alpha)=H_*(Hom(C^*_\alpha;\beta_\#),$
\end{lemma}

\begin{proof}
By theorem 4, {\color{black}there} is a natural exact sequence
	\begin{equation}\label{lem2}
	\begin{tikzpicture}
	
	\node (A) {$\dots$};
	\node (B) [node distance=2cm, right of=A] {$ {\varprojlim} ^ {(3)}   \bar{H}^\alpha_{n+2} $};
	\node (C) [node distance=2.5cm, right of=B] {${\varprojlim} ^ {(1)}  \bar{H}^\alpha_{n+1} $};
	\node (D) [node distance=2.5cm, right of=C] {$ \bar{H}_n \left( {\varinjlim}   C^*_\alpha ;G \right) $};
	\node (E) [node distance=2.5cm, right of=D] {${\varprojlim}  \bar{H}^\alpha_{n} $};
	\node (F) [node distance=2cm, right of=E] {${\varprojlim} ^ {(2)}  \bar{H}^\alpha_{n} $};
	\node (H) [node distance=2cm, right of=F] {$\dots$~,};
	
	\draw[->] (A) to node [above]{}(B);
	\draw[->] (B) to node [above]{}(C);
	\draw[->] (C) to node [above]{}(D);
	\draw[->] (D) to node [above]{}(E);
	\draw[->] (E) to node [above]{}(F);
	\draw[->] (F) to node [above]{}(H);
	
	\end{tikzpicture}
	\end{equation}
	where $\bar{H}_*^\alpha=\bar{H}_* (C^*_\alpha ;G).$ Since the direct system ${\bf C}^*=\{C^*_\alpha\}$ of cochain complexes $C^*_\alpha $ is associated with a cochain complex $C^*$, there is an isomorphism 
	\begin{equation}\label{lem3}
	 \bar{H}_*(\varinjlim C^*_\alpha;G) \simeq  \varinjlim \bar{H}^*(C^*_\alpha;G) \os{\simeq}{\lra} \bar{H}_*(C^*;G).
	 \end{equation}
	 On the other hand, by Universal Coefficient Formula, we have the following commutative diagram with exact rows:
	 \begin{equation}\label{lem4}	
	 \begin{tikzpicture}
	 
	 \node (A) {$0$};
	 \node (B) [node distance=2cm, right of=A] {$Ext(H^{n+1}(C^*);G)$};
	 \node (C) [node distance=3.5cm, right of=B] {$\bar{H}_n(C^*;G)$};
	 \node (D) [node distance=3.5cm, right of=C] {$Hom(H^n(C^*) ;G)$};
	 \node (E) [node distance=2.2cm, right of=D] {$0$};
	 
	 \draw[->] (A) to node [left]{}(B);
	 \draw[->] (B) to node [above]{}(C);
	 \draw[->] (C) to node [above]{}(D);
	 \draw[->] (D) to node [left]{}(E);
	 
	 \node (A1) [node distance=2cm, below of=A] {$0$};
	 \node (B1) [node distance=2cm, below of=B] {$Ext(H^{n+1}(\varinjlim C^*_\alpha);G)$};
	 \node (C1) [node distance=2cm, below of=C] {$  \bar{H}_n(\varinjlim C^*_\alpha;G)$};
	 \node (D1) [node distance=2cm, below of=D] {$  Hom(H^n(\varinjlim C^*_\alpha) ;G)$};
	 \node (E1) [node distance=2cm, below of=E] {$0$.};
	 
	 \draw[->] (A1) to node [left]{}(B1);
	 \draw[->] (B1) to node [above]{}(C1);
	 \draw[->] (C1) to node [above]{}(D1);
	 \draw[->] (D1) to node [left]{}(E1);
	 
	 \draw[->] (B) to node [left]{$\simeq$}(B1);
	 \draw[->] (C) to node [left]{$\bar{\pi}_n$}(C1);
	 \draw[->] (D) to node [left]{$\simeq$}(D1);
	 
	 \end{tikzpicture}
	 \end{equation}
	 Hence, the homomorphism $\bar{\pi}_n$ is an isomorphism for all $n \in \mathbb{Z}.$ Using the exact sequence \eqref{lem2} and the isomorphism $\bar{\pi}_n,$ we obtain an infinite exact sequence \eqref{lem1}.	 
\end{proof}

\begin{corollary}
    Let $\{(X_\alpha, A_\alpha )\}$ be an inverse system of pairs of compact spaces $(X_\alpha, A_\alpha ) $ and $ (X, A)= \varprojlim (X_\alpha, A_\alpha ) $. If $\bar{H}_*$ is the homology theory generated by the Alexander-Spanier cochains, then there is an infinite exact sequence
    $$\cdots \lra \llm^{(2k+1)} \bar{H}_{n+k+1}(X_\alpha, A_\alpha ) \lra \cdots \llm^{(3)} \bar{H}_{n+2}(X_\alpha, A_\alpha ) \lra \llm^{(1)} \bar{H}_{n+1}(X_\alpha, A_\alpha ) \lra$$
    \begin{gather}
    \lra \bar{H}_{n}(X,A;G) \os{\pi_*}{\lra} \llm \bar{H}_{n}(X_\alpha, A_\alpha ) \lra  \llm^{(2)} \bar{H}_{n+1}(X_\alpha, A_\alpha ) \lra \cdots \lra \llm^{(2k)} \bar{H}_{n+k}(X_\alpha, A_\alpha ) \lra \cdots\, .
    \end{gather}
\end{corollary}

\begin{corollary}
	Let $\{(X_i,A_i)\}_{i \in \mathbb{Z}}$ be an inverse {\color{black} sequence} of compact metric spaces $(X_i,A_i) $ and $ (X,A)= \varprojlim (X_i,A_i) $. If $\bar{H}_*$ is the homology theory generated by the Alexander-Spanier cochains, then there is an exact sequence
	\begin{gather}
	0 \lra \llm^{(1)} \bar{H}_{n+1}(X_i,A_i) \lra \bar{H}_{n}(X,A;G) \os{\pi_*}{\lra} \llm \bar{H}_{n}(X_i,A_i) \lra  0 .
	\end{gather}
\end{corollary}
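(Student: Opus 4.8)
The plan is to obtain the assertion as the degenerate case of the infinite exact sequence furnished by the Corollary immediately above, specialised from an arbitrary inverse system of compact pairs to an inverse \emph{sequence}.

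First I would observe that $(X,A)=\varprojlim(X_i,A_i)$ is again a compact metric pair, so the preceding Corollary applies directly and produces, for each $n$, a natural infinite exact sequence whose relevant segment reads
\[
\cdots \lra \varprojlim{}^{(3)}\bar{H}_{n+2}(X_i,A_i)\lra \varprojlim{}^{(1)}\bar{H}_{n+1}(X_i,A_i)\lra \bar{H}_n(X,A;G)\os{\pi_*}{\lra}\varprojlim\bar{H}_n(X_i,A_i)\lra \varprojlim{}^{(2)}\bar{H}_{n+1}(X_i,A_i)\lra\cdots ,
\]
where $\pi_*$ is induced by the projections $\pi_i\colon(X,A)\to(X_i,A_i)$. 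Here I would recall why this sequence exists: the Alexander-Spanier cochain complexes $\{\bar{C}^*(X_i,A_i)\}$ form a direct system which, by continuity of Alexander-Spanier (\v{C}ech) cohomology on the category of compact Hausdorff pairs, is associated with $\bar{C}^*(X,A)$ in the sense of the Definition above; hence the preceding Lemma (equivalently Theorem 4) applies and yields exactly this sequence, together with the identification of the map out of $\bar{H}_n(X,A;G)$ with the canonical $\pi_*$.

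The second step is the classical vanishing of higher derived inverse limits over a countable cofiltered index set possessing a cofinal sequence: for every inverse sequence $\{M_i\}_{i\in\mathbb{Z}}$ of $R$-modules one has $\varprojlim{}^{(i)}M_i=0$ for all $i\ge 2$ (see, e.g., \cite{6}). Applying this with $M_i=\bar{H}_m(X_i,A_i)$ for the relevant $m$, all terms $\varprojlim{}^{(k)}\bar{H}_*(X_i,A_i)$ with $k\ge 2$ in the infinite exact sequence vanish; in particular the term $\varprojlim{}^{(3)}\bar{H}_{n+2}(X_i,A_i)$ preceding $\varprojlim{}^{(1)}\bar{H}_{n+1}(X_i,A_i)$ and the term $\varprojlim{}^{(2)}\bar{H}_{n+1}(X_i,A_i)$ following $\varprojlim\bar{H}_n(X_i,A_i)$ are zero. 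Exactness of the long sequence then collapses it to the short exact sequence
\[
0\lra \varprojlim{}^{(1)}\bar{H}_{n+1}(X_i,A_i)\lra \bar{H}_n(X,A;G)\os{\pi_*}{\lra}\varprojlim\bar{H}_n(X_i,A_i)\lra 0 ,
\]
as claimed.

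I do not anticipate a real difficulty here: the computation is entirely formal once the preceding Corollary is in hand. The only points meriting attention are checking that the hypotheses feeding that Corollary (respectively Theorem 4) are met for the limit pair — that is, that the system of Alexander-Spanier cochain complexes of the $(X_i,A_i)$ is genuinely associated with that of $(X,A)$, which is precisely the continuity of Alexander-Spanier cohomology on compact Hausdorff pairs — and noting that an inverse sequence indexed by $\mathbb{Z}$ is cofinally indexed by $\mathbb{N}$, so that the Milnor--Roos vanishing $\varprojlim{}^{(i)}=0$ for $i\ge 2$ is indeed available. These are standard, so the argument goes through.
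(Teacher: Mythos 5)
Your proposal is correct and matches the argument the paper intends (the corollary is stated without proof as an immediate specialisation of the preceding corollary): one applies the infinite exact sequence for the inverse system of compact pairs and then invokes the classical vanishing $\varprojlim^{(i)}=0$ for $i\ge 2$ over a countable inverse sequence, which kills the terms $\varprojlim^{(3)}\bar{H}_{n+2}$ and $\varprojlim^{(2)}\bar{H}_{n+1}$ flanking the displayed segment and collapses it to the Milnor short exact sequence. Your attention to the associated-system hypothesis (continuity of Alexander--Spanier cohomology on compact pairs) and to the cofinal $\mathbb{N}$-indexing is exactly the right due diligence.
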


\begin{corollary}
	 If $\bar{H}_*$ is the homology theory generated by the Alexander-Spanier cochains, then there is an exact sequence
	\begin{gather}
	0 \lra \llm^{(1)} \bar{H}_{n+1}(K_\alpha,L_\alpha) \lra \bar{H}_{n}(X,A;G) \os{\pi_*}{\lra} \llm \bar{H}_{n}(K_\alpha, L_\alpha) \lra  0 ,
	\end{gather}
	where $ (X,A)=\varprojlim (K_\alpha,L_\alpha)$ and $(K_\alpha,L_\alpha)$ are finite polyhedral pairs.
\end{corollary}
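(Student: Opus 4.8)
The plan is to exhibit $(X,A)$ as an inverse limit of finite polyhedral pairs, recognize the corresponding direct system of cochain complexes as \emph{associated} with the Alexander--Spanier cochain complex of $(X,A)$, invoke the Lemma on associated direct systems, and then truncate the resulting infinite exact sequence using the vanishing of the higher derived inverse limits over an inverse sequence.

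Concretely, I would first fix a presentation $(X,A)=\varprojlim(K_\alpha,L_\alpha)$ by finite polyhedral pairs --- for a compact metric pair this is the standard polyhedral expansion, and the indexing set may then be taken to be $\mathbb{N}$ --- with projections $\pi_\alpha\colon (X,A)\to(K_\alpha,L_\alpha)$. Consider the direct system $\mathbf{C}^*=\{\bar{C}^*(K_\alpha;G)\}$ of Alexander--Spanier cochain complexes; the maps induced by $\pi_\alpha$ assemble into a homomorphism $\mathbf{C}^*\to\bar{C}^*(X;G)$, and the key verification is that this makes $\mathbf{C}^*$ associated with $\bar{C}^*(X;G)$ in the sense of the relevant Definition, i.e.\ that $\varinjlim\bar{H}^*(K_\alpha;G)\to\bar{H}^*(X;G)$ is an isomorphism. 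This is exactly the continuity (tautness) property of \v{C}ech--Alexander--Spanier cohomology on the compact pair, namely Theorem 6.6.2 of \cite{15} quoted above, applied to the polyhedral resolution. Since the $K_\alpha$ are finite polyhedra, each $\bar{C}^*(K_\alpha;G)$ is chain equivalent to a free finitely generated cochain complex, so by Theorem 2 and the Universal Coefficient Formula \eqref{eq3} every $\bar{H}_m(K_\alpha,L_\alpha;G)$ lies in a natural short exact sequence with outer terms $\Ext(H^{m+1}(K_\alpha,L_\alpha);G)$ and $\Hom(H^m(K_\alpha,L_\alpha);G)$, the cohomology modules being finitely generated over $R$.

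Next I would apply the Lemma on associated direct systems to $\mathbf{C}^*$ and $\bar{C}^*(X;G)$: it produces the infinite exact sequence \eqref{lem1}, in which the middle term $\bar{H}_n(\varinjlim C^*_\alpha;G)$ is identified with $\bar{H}_n(X,A;G)$ through the commutative square \eqref{lem4}, whose vertical arrow $\bar{\pi}_n$ is an isomorphism, and in which, by naturality, the morphism $\bar{H}_n(X,A;G)\to\varprojlim\bar{H}_n(K_\alpha,L_\alpha)$ is precisely the canonical map $\pi_*$ induced by the $\pi_\alpha$. It then remains to kill the terms ${\varprojlim}^{(i)}\bar{H}_m(K_\alpha,L_\alpha;G)$ with $i\ge 2$ occurring in \eqref{lem1}.

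This last step is the heart of the matter and the point I expect to be the main obstacle. Applying ${\varprojlim}^{(i)}$ to the Universal Coefficient short exact sequences for the $(K_\alpha,L_\alpha)$ and passing to the induced long exact sequence, it is enough to show ${\varprojlim}^{(i)}\Hom(H^m(K_\alpha,L_\alpha);G)=0$ and ${\varprojlim}^{(i)}\Ext(H^{m+1}(K_\alpha,L_\alpha);G)=0$ for $i\ge 2$. When the indexing set is cofinally an inverse sequence --- which is the case for compact metric pairs, and is the situation intended here --- this holds for \emph{every} inverse system, because the category $\mathbb{N}$ has cohomological dimension $1$; indeed in that case the statement is already the special case of the preceding Corollary obtained by taking the compact metric inverse sequence $(X_i,A_i)=(K_i,L_i)$. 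Feeding this vanishing back into \eqref{lem1} leaves exactly
\[
0 \lra {\varprojlim}^{(1)}\bar{H}_{n+1}(K_\alpha,L_\alpha) \lra \bar{H}_n(X,A;G) \os{\pi_*}{\lra} \varprojlim\bar{H}_n(K_\alpha,L_\alpha) \lra 0,
\]
which is the assertion. The routine-but-careful items are the identification of the connecting arrow with $\pi_*$ and the derivation of the ``associated'' hypothesis from tautness; the derived-limit truncation is immediate once the inverse-sequence hypothesis is made explicit, and for a genuinely uncountable directed system one should not expect more than the infinite exact sequence of the previous Corollary.
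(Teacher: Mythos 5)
Your route is essentially the one the paper intends: the corollary is stated without proof and is meant to follow by taking the polyhedral resolution of $(X,A)$, observing via continuity of \v{C}ech--Alexander--Spanier cohomology that $\{\bar{C}^*(K_\alpha,L_\alpha;G)\}$ is associated with $\bar{C}^*(X,A;G)$, feeding this into the Lemma on associated direct systems, and truncating the infinite exact sequence exactly as in the preceding corollary on inverse sequences. Your identification of the truncation step as the crux is right, and your argument is correct under the hypothesis you yourself introduce, namely that the index set is (cofinally) countable, so that ${\varprojlim}^{(i)}=0$ for $i\ge 2$. Two small points: the fact you need for the ``associated'' condition is the continuity of \v{C}ech cohomology under inverse limits of compact Hausdorff pairs, not the tautness theorem 6.6.2 of \cite{15} (which concerns neighborhoods of a subspace); and you should work throughout with the relative cochain complexes $\bar{C}^*(K_\alpha,L_\alpha;G)$ rather than the absolute ones, though this is only a notational matter. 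The one substantive caveat is the one you flag yourself: the corollary as printed allows an arbitrary directed index set, for which neither your argument nor anything in the paper kills the terms ${\varprojlim}^{(i)}$ with $i\ge 2$; finite generation of $H^m(K_\alpha,L_\alpha)$ does not by itself force this vanishing. So your proof establishes the statement in the compact metric (countable) setting, which is evidently the intended scope given the preceding corollary, but the general case would require an additional vanishing argument that is not supplied.
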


{\bf 5.} Let $C^*_s(X;G)$ be the singular cochain complex of topological spaces $X$ and  $\bar{C}^s_*(X;G)=Hom(C^*_s(X);\beta_\#).$ Let $\bar{H}_*^s(X;G)$  be the homology of the obtained chain complex $\bar{C}^s_*(X;G).$ Therefore, $\bar{H}_*^s(-;G)$ is the homology generated by the singular cochain complex  ${C}^*_s(-;G).$ It is known that there is a natural homomorphism $J^\#:\bar{C}^*(X:G) \to C^*_s(X;G)$ from the Alexander-Sapnier cochain complex to the singular cochain complex, which induces the isomorphism $j^*:\bar{H}^*(X;G) \to \bar{H}^*_s(X;G)$ on the category of manifolds. Therefore, by the Universal Coefficient Formula, we will obtain that if $X$ is manifold, then there is an isomorphism:
\begin{equation}\label{eq31}
j_*:\bar{H}^s_*(X;G) \os{\simeq}{\lra} \bar{H}_*(X;G).
\end{equation}

{\color{black}\section*{Acknowledgement.}
	
	The work partially was supported by Shota Rustaveli National Science Foundation of Georgia (SRNSF grant FR-23-271).\\
	
	We extend our sincere thanks to the reviewers for their valuable comments and suggestions, which have enhanced the exploration of our results.}
\\
~~
\\
~~
{\bf Compliance with Ethical conduct:} Not applicable.\\
{\bf Conflict of interest:} The authors declare no competing interests.\\
{\bf Data Availability:} Not applicable.

\bibliographystyle{elsarticle-num}

\end{document}